\setlist{nosep}
\newcolumntype{L}{>{$}l<{$}}
\newcolumntype{C}{>{$}c<{$}}
\definecolor{lgray}{gray}{0.8}
\providecommand{\keywords}[1]{{\textbf{Key words and phrases:}} #1}
\providecommand{\subjclass}[1]{{\textbf{MSC 2010 subject classifications:}} #1}
\theoremstyle{plain}
\newtheorem{theorem}{Theorem}[section]
\newtheorem{definition}[theorem]{Definition}
\newtheorem{lemma}[theorem]{Lemma}
\newtheorem{proposition}[theorem]{Proposition}
\newtheorem{remark}[theorem]{Remark}
\newenvironment{proof}[1][Proof]{\noindent\textbf{#1.} }{\ \rule{0.5em}{0.5em}}
\DeclareMathOperator{\pen}{pen}
\DeclareMathOperator{\crit}{crit}
\DeclareMathOperator{\leb}{Leb}
\DeclareMathOperator{\card}{Card}
\begin{document}
\title{\textbf{Finite sample improvement of Akaike's
Information Criterion}}
\author{\textbf{Adrien Saumard} \hfill \textbf{Fabien Navarro} \\
CREST, CNRS-ENSAI, Universit\'{e} Bretagne Loire}
\date{}
\maketitle

\begin{abstract}
We emphasize that it is possible to improve the principle of unbiased risk estimation for model selection by addressing excess risk deviations in the design of penalization procedures. Indeed, we propose a modification of Akaike's Information Criterion that
avoids overfitting, even when the sample size is small. We call this
correction an over-penalization procedure.  As proof of concept, we show the nonasymptotic optimality of our histogram selection procedure in density estimation
by establishing sharp oracle inequalities for the Kullback-Leibler divergence. One of the main features of our theoretical results is that they include the estimation of unbounded log-densities. 
To do so, we prove several analytical and probabilistic lemmas that are of independent interest. In an experimental study, we also demonstrate state-of-the-art performance of our over-penalization criterion for bin size selection, in particular outperforming AICc procedure.

\medskip

\noindent \subjclass 62G07, 62G09, 62G10.

\noindent \keywords model selection, testing, AIC, maximum
likelihood, density estimation, bin size, over-penalization, AIC corrected, small sample
size.
\end{abstract}

\section{Introduction}

Since its introduction by Akaike in the early seventies~\cite{Akaike:73}, the celebrated Akaike's Information Criterion (AIC) has been an essential tool for the statistician and its use is almost systematic in problems of model selection and estimator selection for prediction. By choosing among estimators or models constructed from finite degrees of freedom,  the AIC recommends more specifically to maximize the  log-likelihood of the estimators penalized by their corresponding degrees of freedom. This procedure has
found pathbreaking applications in density estimation, regression, time series or neural network analysis, to name a few (\cite{ClaeskensHjort:08}). Because of its simplicity and negligible computation cost---whenever the estimators are given---, it is also far from outdated and continues to serve as one of the most useful devices for model selection in high-dimensional statistics. For instance, it can be used to efficiently tune the Lasso (\cite{ZouHastieTib:07}).

Any substantial and principled improvement of AIC is likely to have a significant impact on the practice of model choices and we bring in this paper an efficient and theoretically grounded solution to the problem of overfitting that can occur when using AIC on small to medium sample sizes.

The fact that AIC tends to be unstable and therefore perfectible in the case of small sample sizes is well known to practitioners and and has long been noted. Suguira~\cite{Sugiura:78} and Hurvich and Tsai~\cite{HurTsai:89} have proposed the so-called AICc (for AIC corrected), which tends to penalize more than AIC. However, the derivation of AICc comes from an asymptotic analysis where the dimension of the models are considered fixed relative to the sample size. In fact, such an assumption does not fit the usual practice of model selection, where the largest models are of dimensions close to the sample size. Another drawback related to AICc is that it has been legitimated through a mathematical analysis only in the linear regression model and for autoregressive models (\cite{ClaeskensHjort:08}). However, to the best of our knowledge, outside these frameworks, there is no theoretical ground for the use of AICc.

Building on considerations from the general nonasymptotic theory of model selection developed during the nineties (see for instance \cite{BarBirMassart:99} and \cite{Massart:07}) and in particular on Castellan's analysis~\cite{Castellan:03}, Birg\'{e} and Rozenholc~\cite{BirRozen:06} have considered an AIC modification specifically designed for the selection of the bin size in histogram selection for density estimation. Indeed, results of \cite{Castellan:03}---and more generally results of \cite{BarBirMassart:99}---advocate to take into account in the design of penalty the number of models to be selected. The importance of the cardinality of the collection of models for model selection is in fact a very general phenomenon and one of the main outcomes of the nonasymptotic model selection theory. In the bin size selection problem, this corresponds to adding a small amount to AIC. Unfortunately, the theory does not specify uniquely the term to be added to AIC. On the contrary, infinitely many corrections are accepted by the theory and in order to choose a good one, intensive experiments were conducted in~\cite{BirRozen:06}. The resulting AIC correction therefore always has the disadvantage of being specifically designed for the task on which it has been tested.

We propose a general approach that goes beyond the limits of the unbiased risk estimation principle. The latter principle is indeed at the core of Akaike's model selection procedure and is more generally the main model selection principle,  which underlies procedures such as Stein's Unbiased Risk Estimator (SURE, \cite{MR630098}) or cross-validation (\cite{ArlotCelisse:10}). We point out that it is more efficient to estimate a quantile of the risk of the estimators---the level of the quantile depending on the size of the collection of models---than its mean. We also develop a (pseudo-)testing point of view, where we find that unbiased risk estimation does not in general allow to control the sizes of the considered tests. This is thus a new, very general model selection principle that we put forward and formalize. We call it an over-penalization procedure, because it systematically involves adding small terms to traditional penalties such as AIC.

Our procedure consists in constructing an estimator from multiple pseudo-tests, built on some random events. From this perspective, our work shares strong connections recent advances in robust estimation by designing estimators from tests (\cite{MR2449129,MR2219712,MR3186748,BarBirgSart:17,lecue2017learning}). However, our focus and perspectives are significantly different from this existing line of works. Indeed, estimators such as T-estimators (\cite{MR2219712}) or $\rho$-estimators (\cite{BarBirgSart:17}) are quasi-universal estimators in the sense that they have very strong statistical guarantees, but they have the drawback to be very difficult---if feasible---to compute. In particular, \cite{MR2449129} also builds estimators from frequency histograms, but to our knowledge no implementation of such estimators exists and it seems to be an open question whether a polynomial time algorithm can effectively compute them or not. Here, we rather keep the tractability of AIC procedure, but we don't look particularly at robustness properties (for instance against outliers). We focus on improving AIC in the nonasymptotic regime.

In addition, it is worth noting that several authors have examined some links between multiple testing and model selection, in particular by making some modifications to classical criteria (see for instance \cite[Chapter 7]{MR3184277}). But these lines of research differ significantly from our approach. Indeed, the first and main use in the literature of multiple testing point of view for model selection concerns variable selection, i.e. the identification of models, particularly in the context of linear regression. (\cite{MR921623, MR2280619,MR2183942,MR2823520,MR2281879,MR2668704}). It consists in considering simultaneously the testing of each variable being equal to zero or not. Instead, we consider model selection from a predictive perspective and do not focus on model identification. It should also be noted that multiple tests may be considered after model selection or at the same time as selective inference (\cite{MR2395714,MR3174645,MR2156820}), but these questions are not directly related to the scope of our paper. 

Lets us now detail our contributions.

\begin{itemize}
\item We propose a general formulation of the model selection task for prediction in terms of a (pseudo-)test procedure (understood in a non classical way which will be detailed in the Section~\ref{ssection_testing}), thus establishing a link between two major topics of contemporary research. In particular, we propose a generic property that the pseudo-tests collection should satisfy in order to ensure an oracle inequality for the selected model. We call this property the ``transitivity property'' and show that it generalizes penalization procedures together with T-estimators and $\rho $-estimators.

\item Considering the problem of density estimation by selecting a histogram, we prove a sharp and fully nonasymptotic oracle inequality for our procedure. Indeed, we describe a control of Kullback-Leibler (KL) divergence---also called excess risk---of the selected histogram as soon as we have one observation. We emphasize that this very strong feature may not be possible when considering AIC. We also stressed that up to our knowledge, our oracle inequality is the first nonasymptotic result comparing
the KL divergence of the selected model to the KL divergence of the oracle in an unbounded setting. Indeed, oracle inequalities in density estimation
are generally expressed in terms of Hellinger distance---which is much easier to handle than the KL divergence, because it is bounded---for the selected model.

\item In order to prove our oracle inequality, we improve upon the previously best known concentration inequality for the chi-square statistics (Castellan \cite{Castellan:03}, Massart \cite{Massart:07}) and this allows us to gain an order of magnitude in the control of the deviations of the excess risks of the estimators. Our result on the chi-square statistics is general and of independent interest.

\item We also prove new Bernstein-type concentration inequalities for log-densities that are unbounded. Again, these probabilistic results, which are naturally linked to information theory, are general and of independent interest.

\item We generalize previous results of Barron and Sheu \cite{BarSheu:91} regarding the existence of margin relations in maximum likelihood estimation (MLE). Indeed, related results of \cite{BarSheu:91} where established under boundedness of the log-densities and we extend them to unbounded log-densities with moment conditions.

\item Finally, from a practical point of view, we bring a nonasymptotic improvement of AIC that has, in its simplest form, the same computational cost as AIC. Furthermore, we show that our over-penalization procedure largely outperforms AIC on small and medium sample sizes, but also surpasses existing AIC corrections such as AICc or Birg\'{e}-Rozenholc's procedure.
\end{itemize}

Let us end this introduction by detailing the organization of the paper.

We present our over-penalization procedure in Section~\ref{section_framework_and_notations_MLE}. More precisely, we detail in Sections~\ref{section_max_vrais_hist_MLE} and \ref{section_histo} our model selection framework related to MLE via histograms. Then in Section~\ref{section_overpen} we define formally over-penalization procedures and highlight their generality. We explain the ideas underlying over-penalization from three different angles: estimation of the ideal penalty, pseudo-testing and a graphical point of view.

Section~\ref{section_results} is devoted to statistical guarantees related to over-penalization. In particular, as concentration properties of the excess risks are at the heart of the design of an over-penalization, we detail them in Section~\ref{section_risks_bounds_MLE}.  We then deduce a general and sharp oracle inequality in Section~\ref{section_results_bounded_setting} and highlight the theoretical advantages compared to an AIC analysis.

New mathematical tools of a probabilistic and analytical nature and of independent interest are presented in Section~\ref{section_prob_tools}. Section~\ref{section_expe} contains the experiments, with detailed practical procedures. We consider two different practical variations of over-penalization and compare them with existing penalization procedures. The superiority of our method is particularly transparent.

The proofs are gathered in Section~\ref{section_proofs_MLE}, as well as in the Appendix~\ref{sec:appendix} which provides further theoretical developments that extend the description of our over-penalization procedure. 

\section{Statistical Framework and Notations\label%
{section_framework_and_notations_MLE}}

The over-penalization procedure, described in Section~\ref{section_overpen}, is legitimated at a heuristic level  within a generic M-estimator selection framework. We put to emphasis in Section~\ref{section_max_vrais_hist_MLE} on maximum likelihood estimation (MLE) since, as proof of concept for our over-penalization procedure, our theoretical and experimental results will address the case of bin size selection for maximum likelihood histogram selection in density estimation. In order to be able to discuss in Section~\ref{section_overpen} the generality of our approach in an M-estimation setting, our presentation of MLE brings notations which extend directly to M-estimation with a general contrast.

\subsection{Maximum Likelihood Density Estimation}
\label{section_max_vrais_hist_MLE}

We are given $n$ independent observations $\left( \xi_{1},\ldots,\xi_{n}\right)$ with unknown common distribution $P$ on a
measurable space $\left( \mathcal{Z},\mathcal{T}\right) $. We assume that there exists a known probability measure $\mu$ on $\left(\mathcal{Z},\mathcal{T}\right) $ such that $P$ admits a density $f_{\ast }$ with respect
to $\mu$: $f_{\ast}=dP/d\mu $. Our goal is to estimate the density $f_{\ast
} $.

For an integrable function $f$ on $\mathcal{Z}$, we set $Pf=P\left( f\right)
=\int_{\mathcal{Z}}f\left( z\right) dP\left( z\right) $ and $\mu f=\mu
\left( f\right) =\int_{\mathcal{Z}}f\left( z\right) d\mu \left( z\right) $.
If $P_{n}=1/n\sum_{i=1}^{n}\delta _{\xi _{i}}$ denotes the empirical
distribution associated to the sample $\left( \xi _{1},\ldots ,\xi
_{n}\right) $, then we set $P_{n}f=P_{n}\left( f\right)
=1/n\sum_{i=1}^{n}f\left( \xi _{i}\right) $. Moreover, taking the
conventions $\ln 0=-\infty $, $0\ln 0=0$ and defining the positive part as $%
\left( x\right) _{+}=x\vee 0$, we set 
\begin{equation*}
\mathcal{S}=\left\{ f:\mathcal{Z}\longrightarrow \mathbb{R}_{+}\text{; }%
\int_{\mathcal{Z}}fd\mu =1\text{ and }P\left( \ln f\right) _{+}<\infty
\right\} \text{ .}
\end{equation*}%
We assume that the unknown density $f_{\ast }$ belongs to $\mathcal{S}$.

Note that since $P\left(\ln f_{\ast }\right) _{-}=\int f_{\ast }\ln f_{\ast}%
\mathbbm{1} _{f_{\ast }\leq 1}d\mu <\infty $, the fact that $f_{\ast }$
belongs to $\mathcal{S}$ is equivalent to $\ln (f_{\ast })\in
L_{1}\left(P\right)$, the space of integrable functions on $\mathcal{Z}$
with respect to $P$.

We consider the MLE of the density $f_{\ast }$. To do so, we define the
maximum likelihood contrast $\gamma $ to be the following functional,%
\begin{equation*}
\gamma :f\in \mathcal{S}\longmapsto \left( z\in \mathcal{Z\longmapsto }-\ln
\left( f\left( z\right) \right) \right) \text{ .}
\end{equation*}%
Then the risk $P\gamma \left( f\right) $ associated to the contrast $\gamma $
on a function $f\in \mathcal{S}$ is the following, 
\begin{equation*}
P\gamma \left( f\right) =P\left( \ln f\right) _{-}-P\left( \ln f\right)
_{+}\in \mathbb{R\cup \left\{ +\infty \right\} }\text{ .}
\end{equation*}

Also, the excess risk of a function $f$ with respect to the density $f_{\ast
}$ is classically given in this context by the KL divergence of $f$ with
respect to $f_{\ast }$. Recall that for two probability distributions $P_{f}$
and $P_{g}$ on $\left( \mathcal{Z},\mathcal{T}\right) $ of respective
densities $f$ and $g$ with respect to $\mu $, the KL divergence of $P_{g}$
with respect to $P_{f}$ is defined to be 
\begin{equation*}
\mathcal{K}\left( P_{f},P_{g}\right) =%
\begin{cases}
\int_{\mathcal{Z}}\ln \left( \frac{dP_{f}}{dP_{g}}\right) dP_{g}=\int_{%
\mathcal{Z}}f\ln \left( \frac{f}{g}\right) d\mu & \text{if}\quad P_{f}\ll
P_{g} \\ 
\infty & \text{otherwise.}%
\end{cases}%
\end{equation*}%
By a slight abuse of notation we denote $\mathcal{K}\left( f,t\right) $
rather than $\mathcal{K}\left( P_{f},P_{g}\right) $ and by the Jensen
inequality we notice that $\mathcal{K}\left( f,g\right) $ is a nonnegative
quantity, equal to zero if and only if $f=g$ $\ \mu $-$a.s.$ Hence, for any $%
f\in \mathcal{S}$, the excess risk of a function $f$ with respect to the
density $f_{\ast }$ satisfies 
\begin{equation}
P\left( \gamma (f))-P(\gamma \left( f_{\ast }\right) \right) =\int_{\mathcal{%
Z}}\ln \left( \frac{f_{\ast }}{f}\right) f_{\ast }d\mu =\mathcal{K}\left(
f_{\ast },f\right) \geq 0  \label{def_excess_risk_kullback}
\end{equation}%
and this nonnegative quantity is equal to zero if and only if $f_{\ast }=f$ $%
\ \mu -a.s.$ Consequently, the unknown density $f_{\ast }$ is uniquely
defined by 
\begin{align*}
f_{\ast }& =\arg \min_{f\in \mathcal{S}}\left\{ P\left( -\ln f\right)
\right\} \\
& =\arg \min_{f\in \mathcal{S}}\left\{ P\gamma \left( f\right) \right\} 
\text{ .}
\end{align*}%
For a model $m$, that is a subset $m\subset \mathcal{S}$, we define the
maximum likelihood estimator on $m$, whenever it exists, by 
\begin{align}
\hat{f}_{m}& \in \arg \min_{f\in m}\left\{ P_{n}\gamma \left( f\right)
\right\}  \label{def_estimator_MLE} \\
& =\arg \min_{f\in m}\left\{ \frac{1}{n}\sum_{i=1}^{n}-\ln \left( f\left(
\xi _{i}\right) \right) \right\} \text{ }.  \notag
\end{align}

\subsection{Histogram Models\label{section_histo}}

The models $m$ that we consider here to define the maximum likelihood
estimators as in (\ref{def_estimator_MLE}) are made of histograms defined on
a fixed partition of $\mathcal{Z}$. More precisely, for a finite partition $%
\Lambda _{m}$ of $\mathcal{Z}$ of cardinality $\left\vert \Lambda
_{m}\right\vert =D_{m}+1$, $D_{m}\in \mathbb{N}$, we set 
\begin{equation*}
m=\left\{ f=\sum_{I\in \Lambda _{m}}\beta _{I}\mathbbm{1}_{I}\text{ ; }%
\left( \beta _{I}\right) _{I\in \Lambda _{m}}\in \mathbb{R}_{+}^{D_{m}+1},%
\text{ }f\geq 0\text{ and }\sum_{I\in \Lambda _{m}}\beta _{I}\mu \left(
I\right) =1\right\} \text{ .}
\end{equation*}%
Note that the smallest affine space contained in $m$ is of dimension $D_{m}$%
. The quantity $D_{m}$ can thus be interpreted as the number of degrees of
freedom in the (parametric) model $m$. We assume that any element $I$ of the
partition $\Lambda _{m}$ is of positive measure with respect to $\mu $: for
all $I\in \Lambda _{m}$, \ \ $\mu \left( I\right) >0$ . As the partition $%
\Lambda _{m}$ is finite, we have $P\left( \ln f\right) _{+}<\infty $ for all 
$f\in m$ and so $m\subset \mathcal{S}$. We state in the next proposition
some well-known properties that are satisfied by histogram models submitted
to the procedure of MLE (see for example \cite[Section 7.3]{Massart:07}).

\begin{proposition}
Let 
\begin{equation*}
f_{m}=\sum_{I\in \Lambda _{m}}\frac{P\left( I\right) }{\mu \left( I\right) }%
\mathbf{1}_{I}\text{ }.
\end{equation*}%
Then $f_{m}\in m$ and $f_{m}$ is called the KL projection of $f_{\ast }$
onto $m$. Moreover, it holds 
\begin{equation*}
f_{m}=\arg \min_{f\in m}P\left( \gamma (f)\right) \text{ .}
\end{equation*}%
The following Pythagorean-like identity for the KL divergence holds, for
every $f\in m$, 
\begin{equation}
\mathcal{K}\left( f_{\ast },f\right) =\mathcal{K}\left( f_{\ast
},f_{m}\right) +\mathcal{K}\left( f_{m},f\right) \text{ .}
\label{pythagore_relation_MLE}
\end{equation}%
The maximum likelihood estimator on $m$ is well-defined and
corresponds to the so-called frequencies histogram associated to the
partition $\Lambda _{m}$. We also have the following formulas,%
\begin{equation*}
\hat{f}_{m}=\sum_{I\in \Lambda _{m}}\frac{P_{n}\left( I\right) }{\mu \left(
I\right) }\mathbbm{1}_{I}\text{        and        } P_n(\gamma(f_m)-\gamma(\hat{f}_m))=\mathcal{K}(\hat{f}_m,f_m)\text{ }.
\end{equation*}%
\end{proposition}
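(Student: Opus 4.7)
The proposition is essentially a bundle of standard but interlocking facts, so I would organize the proof around one algebraic identity that supplies all parts at once. For any $f = \sum_{I \in \Lambda_m} \beta_I \mathbbm{1}_I \in m$ and any probability measure $Q$ on $(\mathcal{Z}, \mathcal{T})$ charging each atom of $\Lambda_m$ (with the convention $0 \ln 0 = 0$), we have the immediate identity
\begin{equation*}
-Q(\ln f) = -\sum_{I \in \Lambda_m} Q(I)\, \ln \beta_I.
\end{equation*}
Applied to $Q = P$, this expresses $P\gamma(f)$ in the variables $\beta_I$; applied to $Q = P_n$, it expresses $P_n \gamma(f)$. This is the workhorse.

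First, membership $f_m \in m$ is a one-liner: the coefficients $P(I)/\mu(I)$ are nonnegative, and $\sum_I \mu(I) \cdot P(I)/\mu(I) = \sum_I P(I) = 1$ since $\Lambda_m$ is a partition and $P$ is a probability. Next, I would prove the Pythagorean-like identity \eqref{pythagore_relation_MLE} by direct expansion: writing out $\mathcal{K}(f_*, f)$, $\mathcal{K}(f_*, f_m)$, and $\mathcal{K}(f_m, f)$ via the workhorse identity (with $Q = P$, noting that $\int_I f_* d\mu = P(I)$), the sum of the last two terms equals the first after cancellation of the $\sum_I P(I) \ln(P(I)/\mu(I))$ contribution. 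Combined with $\mathcal{K}(f_m, f) \geq 0$ (Jensen), this yields $\mathcal{K}(f_*, f) \geq \mathcal{K}(f_*, f_m)$ with equality iff $f = f_m$, which is exactly the minimization statement $f_m = \arg\min_{f \in m} P\gamma(f)$ in view of \eqref{def_excess_risk_kullback}.

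For the MLE part, I would replay the same algebraic move with $P_n$ in place of $P$ and $\hat{f}_m := \sum_I (P_n(I)/\mu(I)) \mathbbm{1}_I$ in place of $f_m$. The candidate $\hat{f}_m$ lies in $m$ by the same membership check (with $P_n(I) \geq 0$ and $\sum_I P_n(I) = 1$). The same expansion now gives the \emph{empirical} Pythagorean identity
\begin{equation*}
P_n \gamma(f) - P_n \gamma(\hat{f}_m) = \mathcal{K}(\hat{f}_m, f) \quad \text{for every } f \in m,
\end{equation*}
from which both remaining claims fall out at once: the right-hand side is nonnegative by Jensen and vanishes only when $f = \hat{f}_m$, proving existence and uniqueness of the MLE and identifying it with the frequencies histogram; taking $f = f_m$ yields the displayed formula $P_n(\gamma(f_m) - \gamma(\hat{f}_m)) = \mathcal{K}(\hat{f}_m, f_m)$.

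The only delicate point is the convention handling when $P_n(I) = 0$ (or $P(I) = 0$) for some cell $I$: there the corresponding $\beta_I$ in $\hat{f}_m$ (or $f_m$) is zero, and one must invoke $0 \ln 0 = 0$ to interpret the KL integrand correctly and to legitimize the expansion of $-Q(\ln f)$. This is the sole subtlety; everything else is bookkeeping. I do not anticipate any real obstacle, since the hypothesis $\mu(I) > 0$ for every $I \in \Lambda_m$ ensures that $f \in m$ is unambiguously described by the coefficients $\beta_I$, so the optimization reduces in both the population and empirical cases to the same finite-dimensional convex problem solved by the workhorse identity above.
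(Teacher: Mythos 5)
Your proof is correct and follows what is essentially the standard argument (the paper itself offers no proof and simply refers the reader to Massart's Section~7.3, which proceeds along the same Pythagorean-decomposition lines as your workhorse identity). One point you should tighten: the claim that $\mathcal{K}(\hat{f}_m,f)=0$ forces $f=\hat{f}_m$ is not quite immediate from Jensen alone. Nonnegativity and the equality case of the Kullback--Leibler divergence only give $\beta_I = P_n(I)/\mu(I)$ on cells with $P_n(I)>0$; the coefficients on empty cells $I$ (those with $P_n(I)=0$) are untouched, since the corresponding terms in $\mathcal{K}(\hat{f}_m,f)=\sum_I P_n(I)\ln\bigl(P_n(I)/(\mu(I)\beta_I)\bigr)$ vanish by the $0\ln 0=0$ convention regardless of $\beta_I$. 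To conclude $f=\hat{f}_m$ $\mu$-a.s., you must additionally invoke the normalization constraint: since $\sum_{I:P_n(I)>0}P_n(I)=1$, the constraint $\sum_I\beta_I\mu(I)=1$ together with $\beta_I\ge 0$ and $\mu(I)>0$ forces $\beta_I=0$ on every cell with $P_n(I)=0$. The same remark applies to the identification of $f_m$ as the unique minimizer of $P\gamma$ when some $P(I)=0$. You acknowledge the $0\ln 0$ issue, but only in the context of making the expansion legitimate; the role of the normalization constraint in closing the uniqueness argument should be stated explicitly. Everything else in your proposal is sound.
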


\begin{remark}
Histogram models are special cases of general exponential families exposed
for example in Barron and Sheu \cite{BarSheu:91} (see also Castellan \cite%
{Castellan:03} for the case of exponential models of piecewise polynomials).
The projection property (\ref{pythagore_relation_MLE}) can be generalized to
exponential models (see \cite[Lemma 3]{BarSheu:91} and Csisz\'ar \cite%
{Csiszar:75}).
\end{remark}

\begin{remark}
As by \eqref{def_excess_risk_kullback} we have 
\begin{equation*}
P\left( \gamma (f_{m})-\gamma \left( f_{\ast }\right) \right) =\mathcal{K}%
\left( f_{\ast },f_{m}\right)
\end{equation*}
and for any $f\in m$, 
\begin{equation*}
P\left( \gamma (f)-\gamma \left( f_{\ast }\right) \right) =\mathcal{K}\left(
f_{\ast },f\right)
\end{equation*}%
we easily deduce from (\ref{pythagore_relation_MLE}) that the excess risk on 
$m$ is still a KL divergence, as we then have for any $f\in m$,%
\begin{equation*}
P\left( \gamma (f)-\gamma (f_{m})\right) =\mathcal{K}\left( f_{m},f\right) 
\text{ }.
\end{equation*}
\end{remark}

\subsection{Over-Penalization}
\label{section_overpen}

Now let's define our model selection procedure. We propose three ways to understand the benefits of over-penalization. Of course, the three points of view are interrelated, but they provide different and complementary insights on the behavior of over-penalization.

\subsubsection{Over-Penalization as Estimation of the Ideal Penalty}\label{ssection_ideal_pen}

We are given a collection of histogram models denoted $\mathcal{M}_{n}$, with finite cardinality depending on the sample size $n$, and its associated collection of maximum likelihood estimators $\left\{ \hat{f}_{m};m\in 
\mathcal{M}_{n}\right\} $. By taking a (nonnegative) penalty function $\pen$
on $\mathcal{M}_{n}$, 
\begin{equation*}
\pen:m\in \mathcal{M}_{n}\longmapsto \pen\left( m\right) \in \mathbb{R}^{+}%
\text{ ,}
\end{equation*}%
the output of the penalization procedure (also called the selected model) is
by definition any model satisfying, 
\begin{equation}
\widehat{m}\in \arg \min_{m\in \mathcal{M}_{n}}\left\{ P_{n}(\gamma (\hat{f}%
_{m}))+\pen\left( m\right) \right\} \text{ .}  \label{def_proc_2_MLE}
\end{equation}%
We aim at selecting an estimator $\hat{f}_{\widehat{m}}$ with a KL
divergence, pointed on the true density $f_{\ast }$, as small as possible.
Hence, we want our selected model to have a performance as close as possible
to the excess risk achieved by an oracle model (possibly non-unique),
defined to be, 
\begin{align}
m_{\ast }\in & \arg \min_{m\in \mathcal{M}_{n}}\left\{ \mathcal{K}(f_{\ast },%
\hat{f}_{m})\right\}  \label{def_oracle} \\
=& \arg \min_{m\in \mathcal{M}_{n}}\left\{ P(\gamma (\hat{f}_{m}))\right\} 
\text{ .}  \label{def_oracle_2}
\end{align}%
From (\ref{def_oracle_2}), it is seen that an ideal penalty in the
optimization task (\ref{def_proc_2_MLE}) is given by%
\begin{equation*}
\pen_{\mathrm{id}}\left( m\right) =P(\gamma (\hat{f}_{m}))-P_{n}(\gamma (%
\hat{f}_{m}))\text{ ,}
\end{equation*}%
since in this case, the criterion $\crit_{\mathrm{id}}\left( m\right)
=P_{n}(\gamma (\hat{f}_{m}))+\pen_{\text{id}}\left( m\right) $ is equal to
the true risk $P(\gamma (\hat{f}_{m}))$. However $\pen_{\mathrm{id}}$ is
unknown and, at some point, we need to give some estimate of it. In
addition, $\pen_{\mathrm{id}}$ is random, but we may not be able to provide
a penalty, even random, whose fluctuations at a fixed model $m$ would be
positively correlated to the fluctuations of $\pen_{\text{id}}\left(
m\right) $. This means that we are rather searching for an estimate of a 
\textit{deterministic functional} of $\pen_{\mathrm{id}}$. But which
functional would be convenient? The answer to this question is essentially
contained in the solution of the following problem.

\medskip

\noindent \textbf{Problem 1. }\textit{For any fixed} $\beta \in \left(
0,1\right) $ \textit{find the} deterministic penalty $\pen_{\mathrm{id}%
,\beta }:\mathcal{M}_{n}\rightarrow R_{+}$, \textit{that minimizes the value
of} $C$, \textit{among constants} $C>0$ \textit{which satisfy the following
oracle inequality}, 
\begin{equation}
\mathbb{P}\left( \mathcal{K}(f_{\ast },\hat{f}_{\widehat{m}})\leq
C\inf_{m\in \mathcal{M}_{n}}\left\{ \mathcal{K(}f_{\ast },\hat{f}%
_{m})\right\} \right) \geq 1-\beta \text{ .}  \label{problem_beta}
\end{equation}

\noindent The solution---or even the existence of a solution---to the
problem given in (\ref{problem_beta}) is not easily accessible and depends
on assumptions on the law $P$ of data and on approximation properties of the
models, among other things. In the following, we give a reasonable candidate
for $\pen_{\mathrm{id},\beta }$. Indeed, let us set $\beta _{\mathcal{M}%
}=\beta /$Card$\left( \mathcal{M}_{n}\right) $ and define 
\begin{equation}
\pen_{\mathrm{opt},\beta }\left( m\right) =q_{1-\beta _{\mathcal{M}}}\left\{
P(\gamma (\hat{f}_{m})-\gamma (f_{m}))+P_{n}(\gamma (f_{m})-\gamma (\hat{f}%
_{m}))\right\} \text{ ,}  \label{pen_opt_quantile}
\end{equation}%
where $q_{\lambda }\left\{ Z\right\} =\inf \left\{ q\in \mathbb{R};\mathbb{P}%
\left( Z\leq q\right) \geq \lambda \right\} $ is the quantile of level $%
\lambda $ for the real random variable $Z$. Our claim is that $\pen_{\text{%
opt,}\beta }$ gives in (\ref{problem_beta}) a constant $C$ which is close to
one, under some general assumptions (see Section \ref{section_results} for
precise results). Let us explain now why $\pen_{\mathrm{opt},\beta }$ should
lead to a nearly optimal model selection.

We set%
\begin{equation*}
\Omega _{0}=\bigcap_{m\in \mathcal{M}_{n}}\left\{ P(\gamma (\hat{f}%
_{m})-\gamma (f_{m}))+P_{n}(\gamma (f_{m})-\gamma (\hat{f}_{m}))\leq \pen_{%
\mathrm{opt},\beta }\left( m\right) \right\} \text{ .}
\end{equation*}%
We see, by definition of $\pen_{\mathrm{opt},\beta }$ and by a simple union
bound over the models $m\in \mathcal{M}_{n}$, that the event $\Omega _{0}$
is of probability at least $1-\beta $. Now, by definition of $\widehat{m}$,
we have, for any $m\in \mathcal{M}_{n}$, 
\begin{equation}
P_{n}(\gamma (\hat{f}_{\widehat{m}}))+\pen_{\mathrm{opt},\beta }(\widehat{m}%
)\leq P_{n}(\gamma (\hat{f}_{m}))+\pen_{\mathrm{opt},\beta }\left( m\right) 
\text{ .}  \label{def_min_crit}
\end{equation}%
By centering by $P\left( \gamma \left( f_{\ast }\right) \right) $ and using
simple algebra, Inequality (\ref{def_min_crit}) can be written as, 
\begin{align*}
& P(\gamma (\hat{f}_{\widehat{m}})-\gamma \left( f_{\ast }\right) )+\left[ %
\pen_{\mathrm{opt},\beta }(\widehat{m})-(P(\gamma (\hat{f}_{\widehat{m}%
})-\gamma (f_{\widehat{m}}))+P_{n}(\gamma (f_{\widehat{m}})-\gamma (\hat{f}_{%
\widehat{m}})))\right] \\
\leq & P(\gamma (\hat{f}_{m})-\gamma \left( f_{\ast }\right) )+\left[ \pen_{%
\mathrm{opt},\beta }\left( m\right) -(P(\gamma (\hat{f}_{m})-\gamma
(f_{m}))+P_{n}(\gamma (f_{m})-\gamma (\hat{f}_{m})))\right] \\
& +\left( P_{n}-P\right) \left( \gamma (f_{m})-\gamma (f_{\widehat{m}%
})\right) \text{ .}
\end{align*}%
Now, on $\Omega _{0}$, we have $\pen_{\mathrm{opt},\beta }(\widehat{m}%
)-(P(\gamma (\hat{f}_{\widehat{m}})-\gamma (f_{\widehat{m}}))+P_{n}(\gamma
(f_{\widehat{m}})-\gamma (\hat{f}_{\widehat{m}})))\geq 0$, so we get on $%
\Omega _{0}$,

\begin{eqnarray*}
&&P(\gamma (\hat{f}_{\widehat{m}})-\gamma \left( f_{\ast }\right) ) \\
&\leq &P(\gamma (\hat{f}_{m})-\gamma \left( f_{\ast }\right) )+\left[ \pen_{%
\mathrm{opt},\beta }\left( m\right) -(P(\gamma (\hat{f}_{m})-\gamma
(f_{m}))+P_{n}(\gamma (f_{m})-\gamma (\hat{f}_{m})))\right] \\
&&+\left( P_{n}-P\right) \left( \gamma (f_{m})-\gamma (f_{\widehat{m}%
})\right) \text{ .}
\end{eqnarray*}%
Specifying to the MLE context, the latter inequality writes,%
\begin{align*}
\mathcal{K(}& f_{\ast },\hat{f}_{\widehat{m}}) \\
\leq & \mathcal{K(}f_{\ast },\hat{f}_{m})+\underset{(a)}{\underbrace{\left[ %
\pen_{\mathrm{opt},\beta }\left( m\right) -(\mathcal{K(}f_{m},\hat{f}_{m})+%
\mathcal{K(}\hat{f}_{m},f_{m}))\right] }}+\underset{(b)}{\underbrace{\left(
P_{n}-P\right) \left( \gamma (f_{m})-\gamma (f_{\widehat{m}})\right) }}\text{
.}
\end{align*}

In order to get an oracle inequality as in (\ref{problem_beta}), it remains
to control $(a)$ and $(b)$ in terms of the excess risks $\mathcal{K(}f_{\ast
},\hat{f}_{m})$ and $\mathcal{K(}f_{\ast },\hat{f}_{\widehat{m}})$. Quantity 
$(a)$ is related to deviations bounds for the true and empirical excess
risks of the M-estimators $\hat{f}_{m}$ and quantity $(b)$ is related to
fluctuations of empirical bias around the bias of the models. Suitable
controls of these quantities (as achieved in our proofs) will give sharp
oracle inequalities (see Section \ref{section_results} below).

\begin{remark}
Notice that our reasoning is not based on the particular value of the
contrast, so that to emphasize this point we choose to keep $\gamma$ in
most of our calculations rather than to specify to the KL divergence related
to the MLE case. As a matter of fact, the penalty $\pen_{\mathrm{opt},\beta
} $ given in (\ref{pen_opt_quantile}) is a good candidate in the general
context of M-estimation.
\end{remark}

We define an over-penalization procedure as follows.

\begin{definition}
A penalization procedure as defined in (\ref{def_proc_2_MLE}) is said to be
an over-penalization procedure if the penalty $\pen$ that is used satisfies $%
\pen\left( m\right) \geq \pen_{\mathrm{opt},\beta }\left( m\right) $ for all 
$m\in \mathcal{M}_{n}$ and for some $\beta \in \left( 1/2,1\right) $.
\end{definition}

Based on concentration inequalities for the excess risks (see Section \ref%
{section_risks_bounds_MLE}) we propose the following over-penalization
penalty for histogram selection,%
\begin{equation}
\pen_{+}\left( m\right) =\left( 1+C\max \left\{ \sqrt{\frac{D_{m}\ln (n+1)}{n%
}};\sqrt{\frac{\ln (n+1)}{D_{m}}};\frac{\ln (n+1)}{D_{m}}\right\} \right) 
\frac{D_{m}}{n}\text{ ,}
\label{pen_a}
\end{equation}%
where $C$ is a constant that should be either fixed a priori ($C=1$ or $2$
are typical choices) or estimated using data (see Section \ref{section_expe} for details about the choice of $C$). The logarithmic terms appearing in (\ref{pen_a}) are linked to the cardinal of the collection of models, since in our proofs we take a constant $\alpha$ such that $\ln\rm{Card} (\mathcal{M}_n)  + 2\ln(n+1) \leq \alpha \ln(n+1)$. The constant $\alpha$ then enters in the constant $C$ of (\ref{pen_a}). We show below nonasymptotic accuracy of
such procedure, both theoretically and practically.

\subsubsection{Over-Penalization through a pseudo-testing approach\label%
{ssection_testing}}

Let us now present a multiple test point of view on the model selection problem. The goal is to infer the oracle model (\ref{def_oracle}) so that an oracle inequality of the type of (\ref{problem_beta}) is ensured.

This task can be formulated by solving iterative pseudo-tests. Indeed, set the following collection of null and alternative hypotheses indexed by pairs of models: for $\left( m,m^{\prime }\right)\in\mathcal{M}_{n}^{2}$,
\begin{equation*}
\left\{ 
\begin{array}{c}
H_{0}\left( m,m^{\prime }\right) :\mathcal{K}(f_{\ast },\hat{f}%
_{m})>C_{n}\cdot \mathcal{K}(f_{\ast },\hat{f}_{m^{\prime }}) \\ 
H_{1}\left( m,m^{\prime }\right) :\mathcal{K}(f_{\ast },\hat{f}_{m})\leq
C_{n}\cdot \mathcal{K}(f_{\ast },\hat{f}_{m^{\prime }})%
\end{array}%
\right. \text{ ,}
\end{equation*}%
where the constant $C_{n}>1$ is uniform in $m,m^{\prime }\in \mathcal{M}_{n}$. To each pair $\left( m,m^{\prime }\right) \in \mathcal{M}_{n}^{2}$, let us assume that we are given a test $T\left( m,m^{\prime }\right) $ that is equal to one if $H_{0}\left( m,m^{\prime }\right) $ is rejected and zero otherwise.

It should be noted at this stage that what we have just called ``pseudo-test'' does not enter directly into the classical theory of statistical tests, since the null and alternative hypotheses that we consider are random events. However, as
we will see, the only notion related to our pseudo-tests needed for our model selection study is the notion of the ``size'' of a pseudo-test, which we will give in the following and which will provide a mathematically consistent analysis of the statistical situation. Moreover,it seems that random assumptions naturally arise in some statistical frameworks. For instance, in the context of variable selection along along the Lasso path, Lockhart \textit{et al.}\cite{MR3210970} consider sequential random null hypotheses based on the active sets of the variable included up to a step on the Lasso path (see especially \cite[Section 2.6]{MR3210970} as well as the discussion of B\"{u}hlmann \textit{et al.} \cite[Section 2]{MR3210971} and the rejoinder \cite[Section 2]{MR3210977}).

Finally, if the testing of random hypotheses disturbs the reader, we suggest taking our multiple pseudo-testing interpretation of the model selection task in its minimal sense, that is, as a mathematical description aiming at investigating tight conditions on the penalty, that would allow for near-optimal oracle guarantees for the penalization scheme (\ref{def_proc_2_MLE}).

In order to ensure an oracle inequality such as in (\ref{problem_beta}), we want to avoid as far as possible selecting a model whose excess risk is far greater than the one of the oracle. In terms of the preceding tests, we will see that this exactly corresponds to controlling the ``size'' of the pseudo-tests $T\left( m,m^{\prime }\right) $.

Let us note $\mathcal{R}\left( m,m^{\prime }\right) =\left\{ T\left(
m,m^{\prime }\right) =1\right\} $ the event where the pseudo-test $T\left(
m,m^{\prime }\right) $ rejects the null hypothesis $H_{0}\left( m,m^{\prime
}\right) $ and $\mathcal{T}\left( m,m^{\prime }\right) =\left\{ \mathcal{K}%
(f_{\ast },\hat{f}_{m})>C_{n}\cdot \mathcal{K}(f_{\ast },\hat{f}_{m^{\prime
}})\right\} $ the event where the hypothesis $H_{0}\left( m,m^{\prime
}\right) $ is true. By extension with the classical theory of statistical
testing, we denote $\alpha \left( m,m^{\prime }\right) $ the size of the
pseudo-test $T\left( m,m^{\prime }\right) $, given by%
\begin{equation*}
\alpha \left( m,m^{\prime }\right) =\mathbb{P}\left( \mathcal{R}\left(
m,m^{\prime }\right) \left\vert \mathcal{T}\left( m,m^{\prime }\right)
\right. \right) \text{ .}
\end{equation*}

To explain how we select a model $\widehat{m}$ that is close to the oracle
model $m_{\ast }$, let us enumerate the models of the collection: $\mathcal{M%
}_{n}=\left\{ m_{1},...,m_{k}\right\} $. Note that we do not assume that the
models are nested. Now, we test $T\left( m_{1},m_{j}\right) $ for $j$
increasing from $2$ to $k$. If there exists $i_{1}\in \left\{ 2,..,k\right\} 
$ such that $T\left( m_{1},m_{i_{1}}\right) =0$, then we perform the pseudo-tests $%
T\left( m_{i_{1}},m_{j}\right) $ for $j$ increasing from $i_{1}+1$ to $k$ or
choose $m_{k}$ as our oracle candidate if $i_{1}=k$. Otherwise, we choose $%
m_{1}$ as our oracle candidate. In general, we can thus define a finite
increasing sequence $m_{i_{l}}$ of models and a selected model $\widehat{m}=%
\widehat{m}\left( T\right) $ through the use of the collection of pseudo-tests $%
\left\{ T\left( m,m^{\prime }\right) ;m,m^{\prime }\in \mathcal{M}%
_{n}\right\} $. Also, the number of pseudo-tests that are needed to define $%
\widehat{m}$ is equal to $%
\card%
\left( \mathcal{M}_{n}\right) -1$.

\begin{figure}[tbp]
\centering
\includegraphics[width=0.7\textwidth]{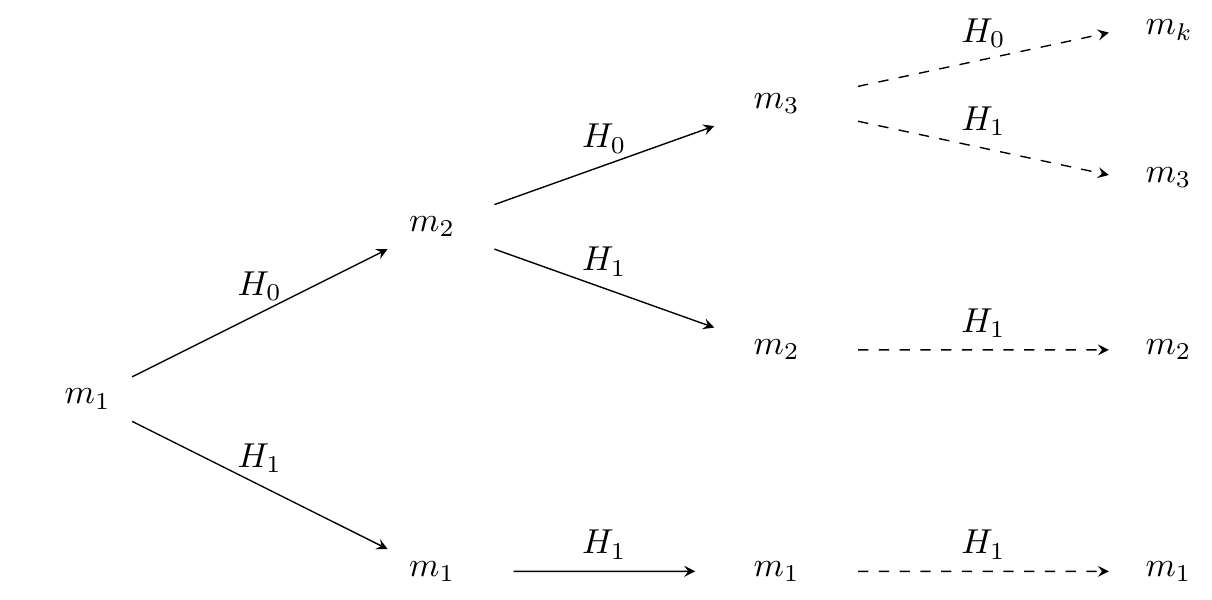}
\caption{Iteration of the pseudo-tests along the collection of models.}
\end{figure}

Let us denote $\mathcal{P}$ the set of pairs of models that have effectively
been tested along the iterative procedure. We thus have $%
\card%
(\mathcal{P})=%
\card%
\left( \mathcal{M}_{n}\right) -1$. Now define the event $\Omega _{T}$ under
which there is a first kind error along the collection of pseudo-tests in $\mathcal{%
P}$,%
\begin{equation*}
\Omega _{T}=\bigcup_{\left( m,m^{\prime }\right) \in \mathcal{P}}\left[ 
\mathcal{R}\left( m,m^{\prime }\right) \bigcap \mathcal{T}\left( m,m^{\prime
}\right) \right] 
\end{equation*}%
and assume that the sizes of the pseudo-tests are chosen such that 
\begin{equation}
\beta =\sum_{\left( m,m^{\prime }\right) \in \mathcal{M}^2_n}\alpha \left(
m,m^{\prime }\right) <1\text{ .}  \label{control_sizes}
\end{equation}

Assume also that the selected model $\widehat{m}$ has the following
Transitivity Property (\textbf{TP}):\medskip

\begin{description}
\item[(TP)] If for any $(m,m^{\prime })\in \mathcal{M}_{n}^{2},$ $T\left(
m,m^{\prime }\right) =1$ then $T\left( \widehat{m},m^{\prime }\right) =1$%
.\medskip
\end{description}

We believe that the transitivity property (\textbf{TP}) is intuitive and
legitimate as it amounts to assuming that there is no contradiction in
choosing $\widehat{m}$ as a candidate oracle model. Indeed, if a model $m$
is thought to be better than a model $m^{\prime }$, that is $T\left(
m,m^{\prime }\right) =1$, then the selected model $\widehat{m}$ should also
be thought to be better that $m^{\prime }$, $T\left( \widehat{m},m^{\prime
}\right) =1$.

The power of the formalism we have just introduced lies in the fact that the combination of Assumptions (\ref{control_sizes}) and (\textbf{TP}) automatically implies that an oracle inequality as in (\ref{problem_beta}) is satisfied. Indeed, property (\textbf{TP}) ensures that $\mathcal{T}\left( 
\widehat{m},m_{\ast }\right) \subset \mathcal{R}\left( \widehat{m},m_{\ast
}\right) $ because on the event $\mathcal{T}\left( \widehat{m},m_{\ast
}\right) $, we have $\widehat{m}\neq m_{\ast }$, so there exists $m$ such
that $\left( m,m_{\ast }\right) \in \mathcal{P}$ and $T\left( m,m_{\ast
}\right) =1$ (otherwise $\widehat{m}=m_{\ast }$) which in turn ensures $%
T\left( \widehat{m},m_{\ast }\right) =1$. Consequently, $\mathcal{T}\left( 
\widehat{m},m_{\ast }\right) =\mathcal{T}\left( \widehat{m},m_{\ast }\right)
\cap \mathcal{R}\left( \widehat{m},m_{\ast }\right) \subset \Omega _{T}$,
which gives%
\begin{equation}
\mathbb{P}\left( \mathcal{K}(f_{\ast },\hat{f}_{\widehat{m}})>C_{n}\cdot 
\mathcal{K}(f_{\ast },\hat{f}_{m_{\ast }})\right) =\mathbb{P}\left( \mathcal{%
T}\left( \widehat{m},m_{\ast }\right) \right) \leq \mathbb{P}\left( \Omega
_{T}\right) \leq \beta \text{ ,}  \label{oracle_test}
\end{equation}%
that is equivalent to (\ref{problem_beta}).

Let us turn now to a specific choice of pseudo-tests corresponding to model
selection by penalization. If we define for a penalty $%
\pen%
$, the penalized criterion $%
\crit%
_{%
\pen%
}\left( m\right) =P_{n}(\gamma (\hat{f}_{m}))+%
\pen%
\left( m\right) $, $m\in \mathcal{M}_{n}$ and take the following pseudo-tests%
\begin{equation*}
T\left( m,m^{\prime }\right) =T_{%
\pen%
}\left( m,m^{\prime }\right) =\mathbbm{1} _{\left\{ 
\crit%
_{%
\pen%
}\left( m\right) \leq 
\crit%
_{%
\pen%
}\left( m^{\prime }\right) \right\} }\text{ ,}
\end{equation*}%
then it holds%
\begin{equation*}
\widehat{m}(T_{%
\pen%
})\in \arg \min_{m\in \mathcal{M}_{n}}\left\{ P_{n}(\gamma (\hat{f}_{m}))+%
\pen%
\left( m\right) \right\}
\end{equation*}%
and property (\textbf{TP}) is by consequence satisfied.

It remains to choose the penalty $%
\pen%
$ such that the sizes of the pseudo-tests $T_{%
\pen%
}\left( m,m^{\prime }\right) $ are controlled. This is achieved by taking $%
\pen%
=%
\pen%
_{\text{opt},\tilde{\beta} /2}$ as defined in (\ref{pen_opt_quantile}), with $\tilde{\beta}=\beta/\text{Card}(\mathcal{M}_n)$. Indeed, in
this case,%
\begin{align}
\alpha \left( m,m^{\prime }\right) =&\mathbb{P(}P_{n}(\gamma (\hat{f}_{m}))+%
\pen\left( m\right) \leq P_{n}(\gamma (\hat{f}_{m^{\prime }}))+\pen\left(
m^{\prime }\right) \left\vert \mathcal{T}\left( m,m^{\prime }\right)\right. )
\notag \\
\approx&\mathbb{P(}\mathcal{K}(f_{\ast },\hat{f}_{m})+\left[ \pen_{\text{opt,%
}\tilde{\beta} /2}\left( m\right) -(\mathcal{K}(f_{m},\hat{f}_{m})+\mathcal{K}(\hat{f%
}_{m},f_{m}))\right]  \label{approx} \\
&\leq \mathcal{K}(f_{\ast },\hat{f}_{m^{\prime }})+\left[ \pen_{\mathrm{opt}%
,\tilde{\beta}/2}\left( m^{\prime }\right) -(\mathcal{K}(f_{m^{\prime }},\hat{f}%
_{m^{\prime }})+\mathcal{K}(\hat{f}_{m^{\prime }},f_{m^{\prime }}))\right]
\left\vert \mathcal{T}\left( m,m^{\prime }\right) \right. )  \notag \\
\leq & \frac{\beta }{2\text{Card}\left( \mathcal{M}^2_{n}\right)} +\mathbb{P}%
\left(\mathcal{K}(f_{\ast },\hat{f}_{m}) \leq \mathcal{K}(f_{\ast },\hat{f}%
_{m^{\prime }})\right.  \notag \\
&\hspace{2.8cm}\left.+\left[ \pen_{\mathrm{opt},\tilde{\beta} /2}\left(
m^{\prime}\right) -(\mathcal{K}(f_{m^{\prime }},\hat{f}_{m^{\prime }})+%
\mathcal{K}(\hat{f}_{m^{\prime }},f_{m^{\prime }}))\right] \left\vert 
\mathcal{T}\left(m,m^{\prime }\right)\right.\right)\text{ .}  \notag
\end{align}%
In line (\ref{approx}), the equality is only approximated since we neglected
the centering of model biases by their empirical counterparts, as these centered random variables should be small compared to the other quantities for models of interest. Now assume that 
\begin{equation*}
\mathbb{P}\left( \pen_{\mathrm{opt},\tilde{\beta} /2}\left( m^{\prime }\right) -(%
\mathcal{K}(f_{m^{\prime }},\hat{f}_{m^{\prime }})+\mathcal{K}(\hat{f}%
_{m^{\prime }},f_{m^{\prime }}))\geq \varepsilon _{n}\mathcal{K}(f_{\ast },%
\hat{f}_{m^{\prime }})\right) \leq \frac{\beta }{2\text{Card}\left( \mathcal{%
M}^2_{n}\right) }\text{ ,}
\end{equation*}%
for some deterministic sequence $\varepsilon _{n}$ not depending on $%
m^{\prime }$. Such result is obtained in Section~\ref%
{section_risks_bounds_MLE} and is directly related to the concentration
behavior of the true and empirical excess risks. Then we get 
\begin{align*}
&\mathbb{P}\left( \mathcal{K}(f_{\ast },\hat{f}_{m})\leq \mathcal{K}(f_{\ast
},\hat{f}_{m^{\prime }})+\left[ \pen_{\mathrm{opt},\tilde{\beta}/2 }\left( m^{\prime
}\right) -(\mathcal{K}(f_{m^{\prime }},\hat{f}_{m^{\prime }})+\mathcal{K}(%
\hat{f}_{m^{\prime }},f_{m^{\prime }}))\right] \left\vert \mathcal{T}\left(
m,m^{\prime }\right) \right. \right) \\
\leq &\frac{\beta }{2\text{Card}\left( \mathcal{M}^2_{n}\right) }+\mathbb{P}%
\left( \mathcal{K}(f_{\ast },\hat{f}_{m})\leq (1+\varepsilon _{n})\mathcal{K}%
(f_{\ast },\hat{f}_{m^{\prime }})\left\vert \mathcal{K}(f_{\ast },\hat{f}%
_{m})>C_{n}\mathcal{K}(f_{\ast },\hat{f}_{m^{\prime }})\right. \right) \\
=&\frac{\beta }{2\text{Card}\left( \mathcal{M}^2_{n}\right) }\text{ ,}
\end{align*}%
where the last equality is valid if $C_{n}\geq 1+\varepsilon _{n}$. In this
case,%
\begin{equation*}
\alpha \left( m,m^{\prime }\right) \leq \frac{\beta }{\text{Card}\left( 
\mathcal{M}^2_{n}\right) }
\end{equation*}%
and inequality (\ref{control_sizes}) is satisfied and so is the oracle
inequality (\ref{oracle_test}).

\begin{remark}
There is a gap between the penalty $\pen_{\mathrm{opt},\beta}$ considered in Section \ref{ssection_ideal_pen} to ensure an oracle inequality and the penalty $\pen_{\mathrm{opt},\tilde{\beta}/2}$ defined in this section. This comes from the fact that using the pseudo-testing framework, we aim at controlling the probability of the event $\Omega_T$ under which there is a first kind error along the pseudo-tests performed in $\mathcal{P}$. Despite the fact that the set $\mathcal{P}$ consists in $\rm{Card}(\mathcal{M}_n)-1$ pseudo-tests, we give a bound that takes into account the $\rm{Card}(\mathcal{M}^2_n)$ pseudo-tests defined from the pairs of models. There is a possible loss here that consists in inflating the set $\mathcal{P}$ in order to make the union bound valid. However, such loss would only affect the constant $C$ in our over-penalization procedure (\ref{pen_a}) by a factor $2$, since the modification of the order of the quantile affects the penalty through a logarithmic factor.

\end{remark}

\begin{remark}
The Transitivity Property (\textbf{TP}) allows to unify most the selection
rules. Indeed, as soon as we want to select an
estimator $\tilde{f}$ (or a model) that optimizes a criterion, 
\begin{equation*}
\tilde{f}\in \arg \min_{f\in \mathcal{F}}\left\{ 
\crit%
\left( f\right) \right\} \text{ ,}
\end{equation*}%
where $\mathcal{F}$ is a collection of candidate functions (or models), then 
$\tilde{f}$ is also defined by the collection of tests $T\left( f,f^{\prime
}\right) =\mathbf{1}_{\left\{ 
\crit%
\left( f\right) \leq 
\crit%
\left( f^{\prime }\right) \right\} }$. In particular, in T-estimation (\cite%
{MR2219712}) as well as in $\rho $-estimation \cite{BarBirgSart:17}) the
estimator is indeed a minimizer of a criterion that is interpreted as a
diameter of a subset of functions in $\mathcal{F}$. Note that in
T-estimation, the criterion is itself constructed through the use of some
(robust) tests, that by consequence do not act at the same level as our
tests in this case.
\end{remark}

\subsubsection{Graphical insights on over-penalization}

Finally, let us provide a graphic perspective on our over-penalization procedure.

If the penalty $%
\pen%
$ is chosen accordingly to the unbiased risk estimation principle, then it
should satisfy, for any model $m\in \mathcal{M}_{n}$,%
\begin{equation*}
\mathbb{E}\left[ P_{n}(\gamma (\hat{f}_{m}))+%
\pen%
\left( m\right) \right] \sim \mathbb{E}\left[ P\left( \gamma (\hat{f}%
_{m})\right) \right] \text{ .}
\end{equation*}%
In other words, the curve $\mathcal{C}_{n}:m\mapsto P_{n}(\gamma (\hat{f}%
_{m}))+%
\pen%
\left( m\right) $ fluctuates around its mean, which is essentially the curve 
$\mathcal{C}_{P}:m\mapsto \mathbb{E}\left[ P\left( \gamma (\hat{f}%
_{m})\right) \right] $, see Figure~\ref{fig:fig1}. Furthermore, the largest
is the model $m$, the largest are the fluctuations of $P_{n}(\gamma (\hat{f}%
_{m}))=\mathcal{K}\left( \hat{f}_{m},f_{m}\right) +P_{n}(\gamma (f_{m}))$.
This is seen for instance through the concentration inequalities established
in Section \ref{section_deviation_bounds}\ below for the empirical excess
risk $\mathcal{K}\left( \hat{f}_{m},f_{m}\right) $. Consequently, it can
happen that the curve $\mathcal{C}_{n}$ is rather flat for the largest
models and that the selected model is among the largest of the collection,
see Figure~\ref{fig:fig1}.

\begin{figure}[tbp]
\centering
\includegraphics[width=0.65\textwidth]{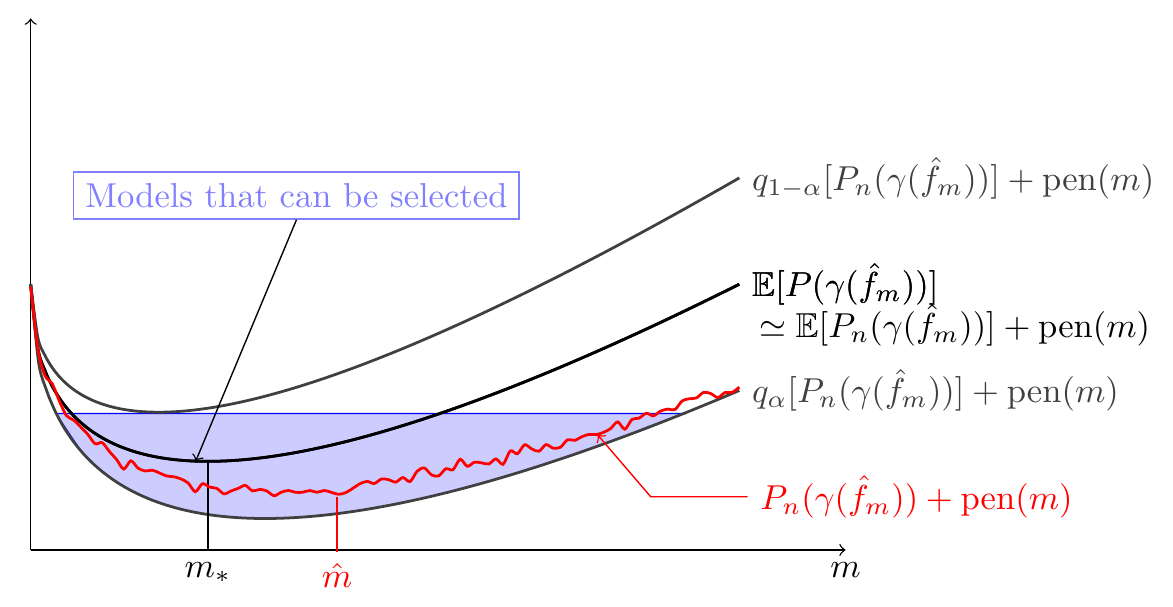}
\caption{A schematic view of the situation corresponding to a selection procedure based on the unbiased risk principle. The penalized empirical risk (in red) fluctuates around the expectation of the true risk. The size of the deviations typically increase with the model size, making the shape of the curves possibly flat for the largest models of the collection. Consequently, the chosen model can potentially be very large and lead to overfitting.}
\label{fig:fig1}
\end{figure}

By using an over-penalization procedure instead of the unbiased risk estimation principle, we will compensate the deviations for the largest models and thus obtain a thinner region of potential selected models, see
Figures~\ref{fig:fig2} and \ref{fig:fig3}. In other words, we will avoid
overfitting and by doing so, we will ensure a reasonable performance of our
over-penalization procedure in situations where unbiased risk estimation
fails. This is particularly the case when the amount of data is small to
moderate.

\begin{figure}[tbp]
\centering
\includegraphics[width=0.65\textwidth]{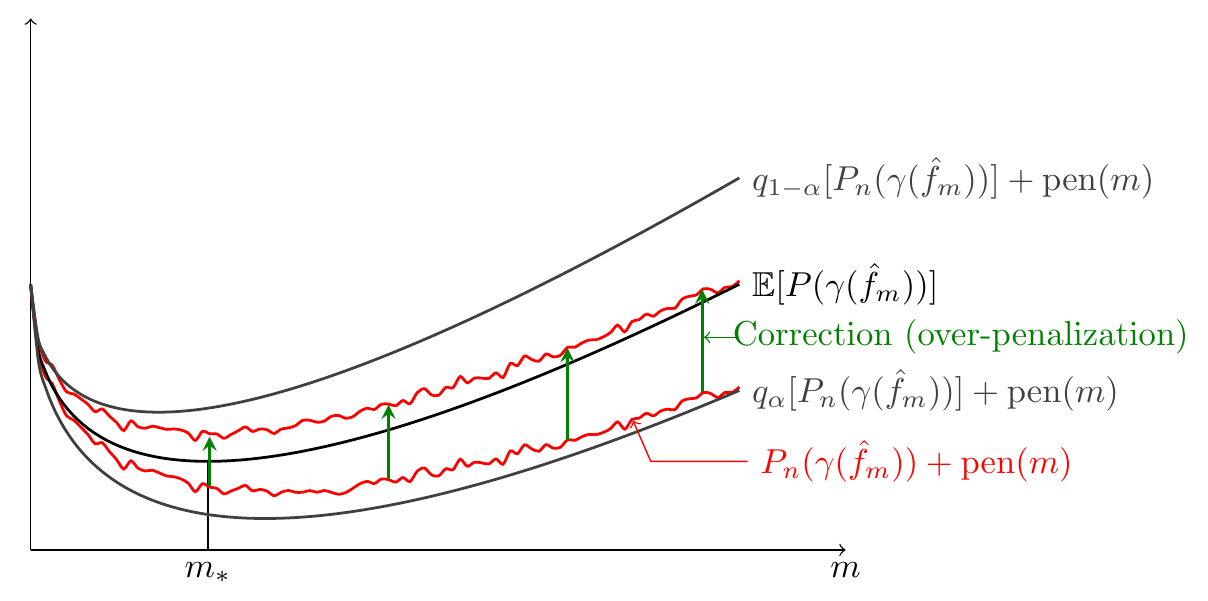}
\caption{The correction that should be applied to an unbiased risk estimation procedure would ideally be of the size of the deviations of the risk for each model of the collection.}
\label{fig:fig2}
\end{figure}

\begin{figure}[tbp]
\centering
\includegraphics[width=0.65\textwidth]{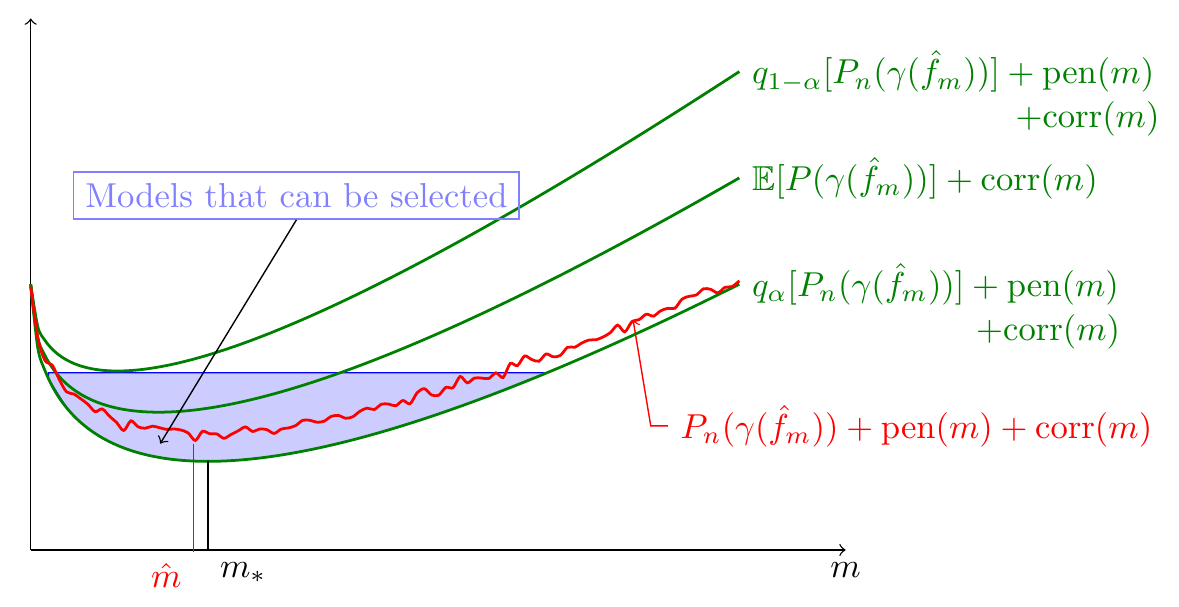}
\caption{After a suitable correction, the minimum of the red curve has a better shape. In addition, the region of models that can be possibly selected is substantially smaller and in particular avoids the largest models of the collection.}
\label{fig:fig3}
\end{figure}

\section{Theoretical Guarantees}
\label{section_results}

We state here our theoretical results related to the behavior of our
over-penalization procedure.

As explained in Section~\ref{section_overpen}, concentration inequalities for true and empirical excess risks are essential tools for understanding our model selection problem. As the theme of concentration inequalities for the excess risk constitutes a very recent and exciting area of research, these inequalities also have an interest in themselves and we state them out in Section~\ref{section_risks_bounds_MLE}.

In Section~\ref{section_results_bounded_setting}, we give a sharp oracle inequality proving the optimality of our procedure. We also compare our result to what would be obtained for AIC, which suggests the superiority of over-penalization in the nonasymptotic regime, i.e. for a small to medium sample size.

\subsection{True and empirical excess risks' concentration\label%
{section_risks_bounds_MLE}}

In this section, we fix the linear model $m$ made of histograms and we are
interested by concentration inequalities for the true excess risk $P\left(
\gamma (\hat{f}_{m})-\gamma (f_{m})\right) $ on $m$ and for its empirical
counterpart $P_{n}\left( \gamma (f_{m})-\gamma (\hat{f}_{m})\right) .$

\begin{theorem}
\label{opt_bounds}Let $n\in \mathbb{N}$, $n\geq 1$ and let $\alpha
,A_{+},A_{-}$ and $A_{\Lambda }$ be positive constants. Take $m$ a model of
histograms defined on a fixed partition $\Lambda _{m}$ of $\mathcal{Z}$. The
cardinality of $\Lambda _{m}$ is denoted by $D_{m}.$ Assume that $%
1<D_{m}\leq A_{+}n/(\ln (n+1))\leq n$ and%
\begin{equation}
0<A_{\Lambda }\leq D_{m}\inf_{I\in \Lambda _{m}}\left\{ P\left( I\right)
\right\} \text{ .}  \label{lower_reg-1}
\end{equation}%
If $\left( \alpha +1\right) A_{+}/A_{\Lambda }\leq \tau =\sqrt{3}-\sqrt{2}< 0.32$,
then a positive constant $A_{0}$ exists, only depending on $\alpha ,A_{+}$
and $A_{\Lambda }$, such that by setting%
\begin{equation}
\varepsilon _{n}^{+}\left( m\right) =\max \left\{ \sqrt{\frac{D_{m}\ln (n+1)%
}{n}};\sqrt{\frac{\ln (n+1)}{D_{m}}};\frac{\ln (n+1)}{D_{m}}\right\}
\label{def_A_0_MLE-1}
\end{equation}%
and 
\begin{equation*}
\varepsilon _{n}^{-}\left( m\right) =\max \left\{ \sqrt{\frac{D_{m}\ln (n+1)%
}{n}};\sqrt{\frac{\ln (n+1)}{D_{m}}}\right\} \text{ ,}
\end{equation*}%
we have, on an event of probability at least $1-4(n+1)^{-\alpha }$, 
\begin{equation}
\left( 1-A_{0}\varepsilon _{n}^{-}\left( m\right) \right) \frac{D_{m}}{2n}%
\leq \mathcal{K}\left( f_{m},\hat{f}_{m}\right) \leq \left(
1+A_{0}\varepsilon _{n}^{+}\left( m\right) \right) \frac{D_{m}}{2n}\text{ ,}
\label{upper_true_risk}
\end{equation}%
\begin{equation}
\left( 1-A_{0}\varepsilon _{n}^{-}\left( m\right) \right) \frac{D_{m}}{2n}%
\leq \mathcal{K}\left( \hat{f}_{m},f_{m}\right) \leq \left(
1+A_{0}\varepsilon _{n}^{+}\left( m\right) \right) \frac{D_{m}}{2n}\text{ .}
\label{upper_emp_risk}
\end{equation}
\end{theorem}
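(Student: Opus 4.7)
The plan is to reduce both KL divergences on $m$ to a chi-square statistic plus controllable remainders. Since $\hat{f}_m$ and $f_m$ are both frequency histograms on $\Lambda_m$, one has the explicit formulas $\mathcal{K}(\hat{f}_m,f_m) = \sum_{I \in \Lambda_m} \hat{p}_I \ln(\hat{p}_I/p_I)$ and $\mathcal{K}(f_m,\hat{f}_m) = \sum_{I \in \Lambda_m} p_I \ln(p_I/\hat{p}_I)$, writing $p_I = P(I)$ and $\hat{p}_I = P_n(I)$. Setting $u_I = (\hat{p}_I - p_I)/p_I$, Taylor-expanding $\ln(1+u)$ to third order, and using the identity $\sum_I (\hat{p}_I - p_I) = 0$, both quantities decompose as $\tfrac{1}{2}\chi^2_n(m) + R_m^{\pm}$, where $\chi^2_n(m) = \sum_I (\hat{p}_I - p_I)^2/p_I$ has expectation $(D_m - 1)/n$ and the remainders $R_m^\pm$ involve $\sum_I (\hat{p}_I - p_I)^3/p_I^2$.

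First, I would show that under the lower-regularity condition $p_I \geq A_\Lambda / D_m$ and the size constraint $D_m \leq A_+ n/\ln(n+1)$, a bin-by-bin Bernstein inequality combined with a union bound over the at-most $D_m \leq n$ bins yields, on an event of probability at least $1 - 2(n+1)^{-\alpha}$, a uniform control $\sup_I |u_I| \leq C_0 \sqrt{(\alpha+1) A_+/A_\Lambda}$ for a numerical $C_0$. The assumption $(\alpha+1)A_+/A_\Lambda \leq \sqrt{3} - \sqrt{2}$ then guarantees $\sup_I |u_I|$ is strictly smaller than $1$ with enough margin to legitimate the third-order Taylor expansion and to bound $|R_m^\pm|$ by a constant times $\sup_I |u_I| \cdot \sum_I p_I u_I^2 = \sup_I |u_I| \cdot \chi^2_n(m)$. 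Plugging the uniform bound on $u_I$ back in shows that $R_m^\pm$ is of smaller order than $\varepsilon_n^\pm(m) D_m/n$.

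Next, I would invoke the sharp chi-square concentration inequality announced in the introduction and developed in Section~\ref{section_prob_tools}. This supplies deviations of $\chi^2_n(m)$ around $(D_m - 1)/n$ of order $\varepsilon_n^-(m) D_m/n$ from below and $\varepsilon_n^+(m) D_m/n$ from above, each with probability at least $1 - (n+1)^{-\alpha}$; the asymmetric term $\ln(n+1)/D_m$ entering only $\varepsilon_n^+$ reflects the one-sided Bernstein correction in the upper tail of a sum of nonnegative squared terms. A union bound combining the chi-square concentration with the remainder control, together with the trivial absorption of $(D_m-1)/n$ into $D_m/n$, then yields (\ref{upper_true_risk}) and (\ref{upper_emp_risk}) on an event of probability at least $1 - 4(n+1)^{-\alpha}$.

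The hard part is the sharp chi-square concentration itself: a straightforward Bernstein bound on the quadratic form only produces deviations of order $\sqrt{D_m}/n$, i.e.\ a non-vanishing relative error, whereas obtaining the finer rate $\varepsilon_n^\pm(m) D_m/n$ and, crucially, the \emph{same} order on both the lower and upper sides (so that $\mathcal{K}(f_m,\hat{f}_m)$ and $\mathcal{K}(\hat{f}_m,f_m)$ each match $D_m/(2n)$ up to a $1+o(1)$ multiplicative factor) requires the improved inequality that the paper highlights as a central new probabilistic ingredient. A secondary technical point is making sure that the third-order remainders $R_m^\pm$ do not inflate the leading constant in front of $D_m/(2n)$; this is where the explicit smallness of $\sup_I |u_I|$ delivered by the condition $(\alpha+1)A_+/A_\Lambda \leq \sqrt{3} - \sqrt{2}$ plays its essential role.
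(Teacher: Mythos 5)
Your proposal is correct and follows the same three-step architecture as the paper's proof in Section~\ref{section_deviation_bounds}: (i) a bin-by-bin Bernstein bound plus union bound over bins to get the sup-norm control $\|(\hat f_m - f_m)/f_m\|_\infty \leq R_n^\infty(m)$ (Lemma~\ref{prop_consistency_normalized_MLE}); (ii) the improved chi-square concentration (Propositions~\ref{prop_upper_dev_chi_2} and~\ref{prop_dev_gauche_chi}) for the deviations of $\chi_n(m)$; and (iii) on the sup-norm event, a sandwiching of both KL divergences on $m$ by multiples of $\chi_n^2(m)$. The one genuine difference lies in step (iii). You Taylor-expand $(1+u)\ln(1+u)$ and $-\ln(1+u)$ and estimate the remainder by $\sup_I|u_I|\cdot\chi_n^2(m)$, which works but requires handling the full series; the paper instead uses the exact bracketing
\[
\frac{1-\varepsilon}{2(1+\varepsilon)^2}\,\chi_n^2(m)\;\leq\;\mathcal{K}(f_m,\hat f_m)\;\leq\;\frac{1+\varepsilon}{2(1-\varepsilon)^2}\,\chi_n^2(m)
\]
on $\Omega_m(\varepsilon)$ (Castellan's inequalities, rederived for $\mathcal{K}(\hat f_m,f_m)$ via Massart's Lemma~7.24 and the elementary bound $(1\vee x)^{-1}\leq(x-1)^{-1}\ln x\leq(1\wedge x)^{-1}$), which yields multiplicative factors directly with no remainder to control. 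This also makes the asymmetry between $\varepsilon_n^\pm$ transparent: the extra $\ln(n+1)/D_m$ term in $\varepsilon_n^+$ arises from squaring the upper chi-square deviation inside $(1+L\sqrt{\ln(n+1)/D_m})^2$, not from a ``one-sided Bernstein correction'' as you write; in the lower bound the corresponding $L^2\ln(n+1)/D_m$ term is simply dropped because it only improves the bound. One small conceptual slip: a standard Bernstein/Talagrand bound on $\chi_n^2(m)$ around its mean $\approx D_m/n$ already gives a \emph{vanishing} relative error of order $1/\sqrt{D_m}$; what the sharper inequality of Proposition~\ref{prop_upper_dev_chi_2} buys is a better logarithmic factor (replacing a $(D_m\ln(n+1)/n)^{1/4}$ contribution by $\sqrt{D_m\ln(n+1)/n}$ in $\varepsilon_n^+$, as discussed after its statement), not merely that the relative error goes to zero.
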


In the previous theorem, we obtain sharp upper and lower bounds for true and empirical excess risk on $m$. They are optimal at the first order since the leading constants are equal in the upper and lower bounds. They show the concentration of the true and empirical excess risks around the value $D_{m}/(2n)$. Moreover, Theorem \ref{opt_bounds} establishes equivalence
with high probability of the true and empirical excess risks for models of reasonable dimension.

Concentration inequalities for the excess risks as in Theorem\ref{opt_bounds} is a new and exciting direction of research related to the theory of statistical learning and to high-dimensional statistics. Boucheron and Massart \cite{BouMas:10} obtained a pioneering result describing the concentration of the empirical excess risk around its mean, a property that they call a high-dimensional Wilks phenomenon. Then a few authors obtained results describing the concentration of the true excess risk around its mean  \cite{saum:12}, \cite{chatterjee2014}, \cite{MurovandeGeer:15} or around its
median \cite{bellec2016bounds}, \cite{bellec2017towards} for (penalized)
least square regression\ and in an abstract M-estimation framework \cite%
{vandeGeerWain:16}. In particular, recent results of \cite{vandeGeerWain:16}
include the case of MLE on exponential models and as a matter of fact, on
histograms. Nevertheless, Theorem \ref{opt_bounds} is a valuable addition to
the literature on this line of research since we obtain here nonetheless the
concentration around a fixed point, but an explicit value $D_{m}/2n$ for
this point. On the contrary, the concentration point is available in \cite%
{vandeGeerWain:16} only through an implicit formula involving local suprema
of the underlying empirical process.

The principal assumption in Theorem \ref{opt_bounds} is inequality (\ref%
{lower_reg-1}) of lower regularity of the partition with respect to $P$. It
is ensured as soon as the density $f_{\ast }$ is uniformly bounded from
below and the partition is lower regular with respect to the reference
measure $\mu $ (which will be the Lebesgue measure in our experiments). No
restriction on the largest values of $f_{\ast }$ are needed. In particular,
we do not restrict to the bounded density estimation setting.

Castellan \cite{Castellan:99} proved related, but weaker inequalities than
in Theorem \ref{opt_bounds}\ above. She also asked for a lower regularity
property of the partition, as in Proposition 2.5 \cite{Castellan:99}, where
she derived a sharp control of the KL divergence of the histogram estimator
on a fixed model. More precisely, Castellan assumes that there exists a
positive constant $B$ such that 
\begin{equation}
\inf_{I\in \Lambda _{m}}\mu \left( I\right) \geq B\frac{\left( \ln
(n+1)\right) ^{2}}{n}\text{ }.  \label{castellan_regularity}
\end{equation}%
This latter assumption is thus weaker than (\ref{lower_reg-1}) for the
considered model as its dimension $D_{m}$ is less than the order $n\left(
\ln (n+1)\right) ^{-2}$. We could assume (\ref{castellan_regularity})
instead of (\ref{lower_reg-1}) in order to derive Theorem \ref{opt_bounds}.
This would lead to less precise results for second order terms in the
deviations of the excess risks but the first order bounds would be
preserved. More precisely, if we replace assumption (\ref{lower_reg-1}) in
Theorem \ref{opt_bounds} by Castellan's assumption (\ref%
{castellan_regularity}), a careful look at the proofs show that the
conclusions of Theorem \ref{opt_bounds} are still valid for $\varepsilon
_{n}=A_{0}\left( \ln (n+1)\right) ^{-1/2}$, where $A_{0}$ is some positive
constant. Thus assumption (\ref{lower_reg-1}) is not a fundamental
restriction in comparison to Castellan's work \cite{Castellan:99}, but it
leads to more precise results in terms of deviations of the true and
empirical excess risks of the histogram estimator.

The proof of Theorem \ref{opt_bounds}, that can be found in Section \ref%
{section_deviation_bounds}, is based on an improvement of independent
interest of the previously best known concentration inequality for the
chi-square statistics. See Section \ref{section_chi_square}\ for the precise
result.

\subsection{An Oracle Inequality}
\label{section_results_bounded_setting}

First, let us state the set of five structural assumptions required to establish the nonasymptotic optimality of the over-penalization procedure. These assumptions will be discussed in more detail at the end of this section, following the statement of a sharp oracle inequality.

\underline{Set of assumptions (\textbf{SA})}

\begin{description}
\item[(P1)] Polynomial complexity of $\mathcal{M}_{n}$: $\card\left( 
\mathcal{M}_{n}\right) \leq n^{\alpha _{\mathcal{M}}}.$

\item[(P2)] Upper bound on dimensions of models in $\mathcal{M}_{n}$: there
exists a positive constant $A_{\mathcal{M},+}$ such that for every $m\in 
\mathcal{M}_{n},$ 
\begin{equation*}
D_{m}\leq A_{\mathcal{M},+}\frac{n}{\left( \ln (n+1)\right) ^{2}}\leq n\text{
}.
\end{equation*}

\item[(Asm)] The unknown density $f_{\ast }$ satisfies some moment condition
and is uniformly bounded from below: there exist some constants $A_{\min }>0 
$ and $p>1$ such that, 
\begin{equation*}
\int_{\mathcal{Z}}f_{\ast }^{p}\left[ (\ln f_{\ast })^{2}\vee 1\right] d\mu
<+\infty
\end{equation*}%
and 
\begin{equation}
\inf_{z\in \mathcal{Z}}f_{\ast }\left( z\right) \geq A_{\min }>0\text{ }.
\label{lower_target}
\end{equation}

\item[(Alr)] Lower regularity of the partition with respect to $\mu $: there
exists a positive finite constant $A_{\Lambda }$ such that, for all $m\in 
\mathcal{M}_{n}$, 
\begin{equation*}
D_{m}\inf_{I\in \Lambda _{m}}\mu \left( I\right) \geq A_{\Lambda }\geq A_{\mathcal{M},+}(\alpha _{\mathcal{M}}+6)/\tau\text{ },
\end{equation*}
where $\tau=\sqrt{3}-\sqrt{2}>0$.

\item[(\textbf{Ap)}] The bias decreases like a power of $D_{m}$: there exist 
$\beta _{-}\geq \beta _{+}>0$ and $C_{+},C_{-}>0$ such that%
\begin{equation*}
C_{-}D_{m}^{-\beta _{-}}\leq \mathcal{K}\left( f_{\ast },f_{m}\right) \leq
C_{+}D_{m}^{-\beta _{+}}\text{ }.
\end{equation*}
\end{description}

We are now ready to state our main theorem related to optimality of
over-penalization.

\begin{theorem}
\label{theorem_opt_pen_MLE}Take $n\geq 1$ and $r\in \left( 0,p-1\right) $.
For some $\Delta >0$, consider the following penalty,%
\begin{equation*}
\pen%
\left( m\right) =\left( 1+\Delta \varepsilon _{n}^{+}\left( m\right) \right) 
\frac{D_{m}}{n}\text{ , \ \ \ \ \ \ \ \ \ \ for all\ }m\in \mathcal{M}_{n}%
\text{ .}  
\end{equation*}%
Assume that the set of assumptions (\textbf{SA}) holds and that 
\begin{equation}
\beta _{-}<p\left( 1+\beta _{+}\right) /(1+p+r)\text{ \ \ or \ \ }%
p/(1+r)>\beta _{-}+\beta _{-}/\beta _{+}-1\text{ }.  \label{condition_beta}
\end{equation}%
Then there exists an event $\Omega _{n}$ of probability at least $%
1-(n+1)^{-2}$ and some positive constant $A_{1}$ depending only on the
constants defined in (\textbf{SA}) such that, if $\Delta \geq A_{1}>0$ then
we have on $\Omega _{n}$,%
\begin{equation}
\mathcal{K}\left( f_{\ast },\hat{f}_{\widehat{m}}\right) \leq (1+\delta
_{n})\inf_{m\in \mathcal{M}_{n}}\left\{ \mathcal{K}\left( f_{\ast },\hat{f}%
_{m}\right) \right\} \text{ ,}  \label{oracle_um_opt}
\end{equation}%
where $\delta _{n}=L_{\text{(\textbf{SA})},\Delta ,r}\left( \ln (n+1)\right)
^{-1/2}$ is convenient.
\end{theorem}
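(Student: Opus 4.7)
The plan is to implement rigorously the heuristic derivation of Section~2.3.1, combined with the excess-risk concentration of Theorem~\ref{opt_bounds} and a Bernstein-type concentration for the centered log-density increments.

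Starting from the defining inequality (\ref{def_min_crit}) and centering by $P\gamma(f_\ast)$, I would rewrite, for every $m\in\mathcal{M}_n$,
\begin{align*}
\mathcal{K}(f_\ast,\hat f_{\widehat m})
&\le \mathcal{K}(f_\ast,\hat f_m)
+\underbrace{\bigl[\pen(m)-(\mathcal{K}(f_m,\hat f_m)+\mathcal{K}(\hat f_m,f_m))\bigr]}_{(a)_m}\\
&\quad-\underbrace{\bigl[\pen(\widehat m)-(\mathcal{K}(f_{\widehat m},\hat f_{\widehat m})+\mathcal{K}(\hat f_{\widehat m},f_{\widehat m}))\bigr]}_{(a)_{\widehat m}}
+\underbrace{(P_n-P)(\gamma(f_m)-\gamma(f_{\widehat m}))}_{(b)}.
\end{align*}
The goal is to show $(a)_{\widehat m}\ge 0$ while $(a)_m$ is $O(\varepsilon_n^+(m)\,D_m/n)$, and $(b)$ is a small fraction of $\mathcal{K}(f_\ast,\hat f_m)+\mathcal{K}(f_\ast,\hat f_{\widehat m})$, both up to remainders absorbable by (\textbf{Ap}). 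This should yield $(1-\delta_n)\mathcal{K}(f_\ast,\hat f_{\widehat m})\le(1+\delta_n)\mathcal{K}(f_\ast,\hat f_m)$ on a high-probability event, and we conclude by taking the infimum over $m$.

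First I would build the good event $\Omega_n$. Apply Theorem~\ref{opt_bounds} with $\alpha:=\alpha_{\mathcal{M}}+2$; assumption (\textbf{Alr}) gives $(\alpha+1)A_{\mathcal{M},+}/A_\Lambda\le\tau$, so the theorem is available on each $m\in\mathcal{M}_n$. A union bound using (\textbf{P1}) yields an event of probability at least $1-4(n+1)^{-2}$ on which (\ref{upper_true_risk})-(\ref{upper_emp_risk}) hold simultaneously for every $m$. Picking $\Delta\ge 2A_0$, the definition of $\pen$ and of $\varepsilon_n^\pm$ give immediately
\[
0\le (a)_m \le (\Delta+A_0)\,\varepsilon_n^+(m)\frac{D_m}{n},\qquad (a)_{\widehat m}\ge 0,
\]
so the $\widehat m$-bracket can simply be dropped.

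The main work is the control of $(b)$. Here I would invoke the new Bernstein-type concentration inequality for unbounded log-densities advertised in the contributions of Section~\ref{section_prob_tools}, applied to the function $g_{m,m'}:=\ln f_{m'}-\ln f_m$ for every pair $(m,m')\in\mathcal{M}_n^2$. Combined with (\textbf{Asm}) --- which supplies the required $p$-th moment on $(\ln f_\ast)^2 f_\ast^p$ and a lower bound on $f_\ast$ so that $\ln f_m$ is bounded below and satisfies the needed integrability --- this yields, after a union bound over the polynomial-size $\mathcal{M}_n^2$, a further event on which
\[
|(P_n-P)g_{m,\widehat m}|\le \eta_n\bigl(\mathcal{K}(f_\ast,\hat f_m)+\mathcal{K}(f_\ast,\hat f_{\widehat m})\bigr) + R_n(m,\widehat m),
\]
with $\eta_n=O((\ln(n+1))^{-1/2})$ and a remainder $R_n$ controlled by a power of $D_m,D_{\widehat m}$. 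The margin relations extended from Barron--Sheu should be used here to upgrade $L_2$-type variance terms coming from Bernstein's inequality into a fraction of the Kullback divergence. This is the hard part, because $\widehat m$ is random and the log-densities are not bounded above: I would handle it by proving the inequality uniformly in $m'$ (peeling over the finite collection $\mathcal{M}_n$) and applying the auxiliary parameter $r\in(0,p-1)$ to split moments $p=(1+r)+(p-1-r)$ when invoking Hölder in the Bernstein variance.

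Finally, assembling the pieces on $\Omega_n$,
\[
\mathcal{K}(f_\ast,\hat f_{\widehat m})\le \mathcal{K}(f_\ast,\hat f_m)+(\Delta+A_0)\varepsilon_n^+(m)\frac{D_m}{n}+\eta_n\bigl(\mathcal{K}(f_\ast,\hat f_m)+\mathcal{K}(f_\ast,\hat f_{\widehat m})\bigr)+R_n,
\]
and it remains to show that the deterministic perturbations are dominated by $\eta_n\,\mathcal{K}(f_\ast,\hat f_m)$. Using (\textbf{Ap}) and $\mathcal{K}(f_\ast,\hat f_m)=\mathcal{K}(f_\ast,f_m)+\mathcal{K}(f_m,\hat f_m)\gtrsim D_m^{-\beta_-}+D_m/n$, the variance term $D_m/n$ is automatically absorbed, while the remainder $R_n$ is controlled precisely by the trade-off condition (\ref{condition_beta}): either branch of the disjunction ensures that $R_n(m,m')\le\eta_n\max\{\mathcal{K}(f_\ast,\hat f_m),\mathcal{K}(f_\ast,\hat f_{m'})\}$ uniformly over $\mathcal{M}_n$. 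Rearranging as $(1-\eta_n)\mathcal{K}(f_\ast,\hat f_{\widehat m})\le (1+O(\eta_n))\mathcal{K}(f_\ast,\hat f_m)$ and optimizing over $m$ gives (\ref{oracle_um_opt}) with $\delta_n=L\eta_n$. The probability statement $1-(n+1)^{-2}$ comes from merging the two union bounds above and choosing $\alpha=\alpha_{\mathcal{M}}+3$ if necessary to match the stated exponent.
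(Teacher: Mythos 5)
Your high-level strategy is the paper's (the paper proves the more general Theorem~\ref{theorem_opt_pen_MLE_gene} in the appendix and deduces the statement as a corollary): same centering, same decomposition into terms $(a)_m$, $(a)_{\widehat m}$, $(b)$, same good event built from Theorem~\ref{opt_bounds}, the Bernstein-type concentration inequalities, and the margin-like relations, and the same absorption of remainders via (\textbf{Ap}) and condition~(\ref{condition_beta}). But there is a concrete gap in your control of $(b)$. You propose to apply the new Bernstein inequality directly to $g_{m,m'}=\ln f_{m'}-\ln f_m$ for all pairs $(m,m')$, with a union bound over $\mathcal{M}_n^2$. That is not what Propositions~\ref{delta_bar_dev_droite_MLE} and~\ref{delta_bar_dev_gauche_MLE} allow: they are stated for $\ln(f/f_\ast)$ with $f\in\mathcal{S}$ a density, and the proof of~(\ref{dev_droite_gene_2}) (the ``no hyper-compression'' bound) uses in an essential way that $P[f/f_\ast]=\int f\,d\mu=1$ and $\mathbb{E}[\ln(f/f_\ast)]=-\mathcal{K}(f_\ast,f)$. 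Neither identity holds with a model density $f_m$ in place of $f_\ast$, so $(P_n-P)g_{m,m'}$ cannot be controlled this way; moreover, the margin relations of Propositions~\ref{lem:margin_like_unbounded} and~\ref{lemma_margin_like_bounded_below} bound variance proxies of $\ln(f_m/f_\ast)$ in terms of $\mathcal{K}(f_\ast,f_m)$ only, not any functional of $\ln(f_{m'}/f_m)$. The paper instead writes $(b)=\bar\delta(\widehat m)-\bar\delta(m)$ with $\bar\delta(m)=(P_n-P)\ln(f_m/f_\ast)$, and bounds the two terms separately and asymmetrically: the right tail for $\bar\delta(\widehat m)$ with variance proxy $v_{\widehat m}$ giving exponent $1-1/p$, and the left tail for $-\bar\delta(m)$ with variance proxy $w_m$ giving exponent $1-(r+1)/p$. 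That asymmetry is precisely where the auxiliary parameter $r$ genuinely enters, and your sketch collapses it.

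A secondary gap is that you assert the deterministic remainders are ``automatically absorbed'' by $\eta_n\,\mathcal{K}(f_\ast,\hat f_m)$. In the paper this step requires Young-type splitting (Lemma~\ref{prop_ineq_tech}) to turn the $\sqrt{\mathcal{K}_{\widehat m}^{1-1/p}z_n/n}$ terms into $\eta\mathcal{K}_{\widehat m}$ plus an $\widehat m$-free remainder of order $(\ln(n+1)/n)^{p/(p+1+r)}$, and then a two-case analysis on whether $D_{m_\ast}$ exceeds a threshold of order $(\ln(n+1))^2$, using the dimension bounds $L^{(1)}n^{\beta_+/((1+\beta_+)\beta_-)}\leq D_{m_\ast}\leq L^{(2)}n^{1/(1+\beta_+)}$ of Lemma~\ref{lemma_oracle_model}. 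It is at that point, and only there, that~(\ref{condition_beta}) is invoked to show the $\widehat m$-free remainder is $O((\ln(n+1))^{-1/2})\cdot\mathcal{K}(f_\ast,\hat f_{m_\ast})$. You name the right condition but do not exhibit the mechanism that makes either branch of the disjunction do this job.
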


We derive in Theorem \ref{theorem_opt_pen_MLE} a pathwise oracle inequality
for the KL excess risk of the selected estimator, with constant almost one.
Our result thus establishes the nonasymptotic quasi-optimality of
over-penalization with respect to the KL divergence.

It is worth noting that three very strong features related to oracle
inequality (\ref{oracle_um_opt}) significantly improve upon the literature.
Firstly, inequality (\ref{oracle_um_opt}) expresses the performance of the
selected estimator through its KL divergence and compare it to the KL
divergence of the oracle. Nonasymptotic results pertaining to (robust)
maximum likelihood based density estimation usually control the Hellinger
risk of the estimator, \cite{Castellan:03}, \cite{Massart:07}, \cite%
{BirRozen:06}, \cite{MR2219712}, \cite{BarBirgSart:17}. The main reason is
that the Hellinger risk is much easier to handle that the KL divergence from
a mathematical point of view. For instance, the Hellinger distance is
bounded by one while the KL divergence can be infinite. However, from a
M-estimation perspective, the natural excess risk associated to likelihood
optimization is indeed the KL divergence and not the Hellinger distance.
These two risks are provably close to each other in the bounded setting \cite%
{Massart:07}, but may behave very differently in general.

Second, nonasymptotic results describing the performance of procedures
based on penalized likelihood, by comparing more precisely the (Hellinger)
risk of the estimator to the KL divergence of the oracle, all deal with the
case where the log-density to be estimated is bounded (\cite{Castellan:03}, 
\cite{Massart:07}). Here, we substantially extend the setting by considering
only the existence of a finite polynomial moment for the large values of the
density to be estimated.

Finally, the oracle inequality (\ref{oracle_um_opt}) is valid with positive probability---larger than $3/4$---as soon as one data is
available. To our knowledge, any other oracle inequality describing penalization performance for maximum likelihood density estimation is valid with positive probability only when the sample size $n$ is greater than an integer $n_{0}$ which depends on the constants defining the problem and that is thus unknown. We emphasize that we control the risk of the selected estimator \textit{for any sample size} and that this property is essential in practice when dealing with small to medium sample sizes.
Based on the arguments developed in Section \ref{section_overpen},\ we
believe that such a feature of Theorem \ref{theorem_opt_pen_MLE} is accessible only through the use of over-penalization and we conjecture in particular that  \textit{it is impossible} \textit{using AIC} \textit{to achieve such a control the
KL divergence of the selected estimator for any sample size}.

The oracle inequality (\ref{oracle_um_opt}) is valid under conditions (\ref%
{condition_beta}) relating the values of the bias decaying rates $\beta _{-}$%
and $\beta _{+}$ to the order $p$ of finite moment of the density $f_{\ast }$
and the parameter $r$. In order to understand these latter conditions, let
us assume for simplicity that $\beta _{-}=\beta _{+}=:\beta $. Then the
conditions (\ref{condition_beta}) both reduce to $\beta <p/(1+r).$ As $r$
can be taken as close to zero as we want, the latter inequality reduces to $%
\beta <p$. In particular, if the density to be estimated is bounded ($%
p=\infty $), then conditions (\ref{condition_beta}) are automatically
satisfied. If on the contrary the density $f_{\ast }$ only has finite polynomial
moment $p$, then the bias should not decrease too fast. In light of the
following comments, if $f_{\ast }$ is assumed to be $\alpha $-H\"{o}lderian, 
$\alpha \in (0,1]$, then $\beta \leq 2\alpha \leq 2$ and the conditions (\ref%
{condition_beta}) are satisfied, in the case where $\beta _{-}=\beta _{+}$,
as soon as $p\geq 2$.

To conclude this section,  let's now comment on the set of assumptions (\textbf{SA}). Assumption (\textbf{P1}) indicates that the collection of models  has increasing polynomial complexity. This is well suited to bin size selection because in this case we usually select among a number of models which is strictly bounded from above by the sample size. In the same manner,
Assumption (\textbf{P2}) is legitimate enough for us and corresponds to practice, where we aim at considering bin sizes for which each element of the partition contains a few sample points.

Assumption (\textbf{Asm}) imposes conditions on the density to be estimated.
More precisely, Assumption (\ref{lower_target}) stating that the unknown
density is uniformly bounded from below can also be found in the work of
Castellan \cite{Castellan:99}. The author assumes moreover in Theorem 3.4
where she derives an oracle inequality for the (weighted) KL excess risk of
the histogram estimator, that the target is of finite sup-norm. Furthermore,
from a statistical perspective, the lower bound (\ref{lower_target}) is
legitimate since, by Assumption (\textbf{Alr}), we use models of
lower-regular partitions with respect to the Lebesgue measure. In the case
where Inequality (\ref{lower_target}) would not hold, one would typically
have to consider exponentially many irregular histograms to take into
account the possibly vanishing mass of some elements of the partitions (for
more details on this aspect that goes beyond the scope of the present paper,
see for instance \cite{Massart:07}).

We require in (\textbf{Ap}) that the quality of the approximation of the
collection of models is good enough in terms of bias. More precisely, we
require a polynomially decreasing of excess risk of KL projections of the
unknown density onto the models. For a density $f_{\ast }$ uniformly bounded
away from zero, the upper bound on the bias is satisfied when for example, $%
\mathcal{Z}$ is the unit interval, $\mu =\leb$ is the Lebesgue measure on
the unit interval, the partitions $\Lambda _{m}$ are regular and the density 
$f_{\ast }$ belongs to the set $\mathcal{H}\left( H,\alpha \right) $ of $%
\alpha $-h\"{o}lderian functions for some $\alpha \in \left( 0,1\right] $:
if $f\in \mathcal{H}\left( H,\alpha \right) $, then for all $\left(
x,y\right) \in \mathcal{Z}^{2}$ 
\begin{equation*}
\left\vert f\left( x\right) -f\left( y\right) \right\vert \leq H\left\vert
x-y\right\vert ^{\alpha }\text{ .}
\end{equation*}%
In that case, $\beta _{+}=2\alpha $ is convenient and AIC-type procedures
are adaptive to the parameters $H$ and $\alpha $, see Castellan \cite%
{Castellan:99}.

In assumption (\textbf{Ap}) of Theorem \ref{theorem_opt_pen_MLE} we also
assume that the bias $\mathcal{K}\left( f_{\ast },f_{m}\right) $ is bounded
from below by a power of the dimension $D_{m}$ of the model $m$. This
hypothesis is in fact quite classical as it has been used by Stone \cite%
{Stone:85} and Burman \cite{Burman:02}\ for the estimation of density on
histograms and also by Arlot and Massart \cite{ArlotMassart:09} and Arlot 
\cite{Arl:2008a}, \cite{Arl:09} in the regression framework. Combining Lemma
1 and 2 of Barron and Sheu \cite{BarSheu:91} -\ see also inequality (\ref%
{margin_like_opt}) of Proposition \ref{lemma_margin_like_bounded_below}\
below -- we can show that 
\begin{equation*}
\frac{1}{2}e^{-3\left\Vert \ln \left( \frac{f_{\ast }}{f_{m}}\right)
\right\Vert _{\infty }}\int_{\mathcal{Z}}\frac{\left( f_{m}-f_{\ast }\right)
^{2}}{f_{\ast }}d\mu \leq \mathcal{K}\left( f_{\ast },f_{m}\right)
\end{equation*}%
and thus assuming for instance that the target is uniformly bounded, $%
\left\Vert f_{\ast }\right\Vert _{\infty }\leq A_{\ast }$, we get 
\begin{equation*}
\frac{A_{\min }^{3}}{2A_{\ast }^{4}}\int_{\mathcal{Z}}\left( f_{m}-f_{\ast
}\right) ^{2}d\mu \leq \mathcal{K}\left( f_{\ast },f_{m}\right) \text{ .}
\end{equation*}%
Now, since in the case of histograms the KL projection $f_{m}$ is also the $%
L_{2}\left( \mu \right) $ projection of $f_{\ast }$ onto $m$, we can apply
Lemma 8.19 in Section 8.10 of Arlot \cite{Arlot:07} to show that assumption (%
\textbf{Ap}) is indeed satisfied for $\beta _{-}=1+\alpha ^{-1}$, in the
case where $\mathcal{Z}$ is the unit interval, $\mu =\leb$ is the Lebesgue
measure on the unit interval, the partitions $\Lambda _{m}$ are regular and
the density $f_{\ast }$ is a non-constant $\alpha $-h\"{o}lderian function.

The proof of Theorem \ref{theorem_opt_pen_MLE} and further descriptions of
the behavior of the procedure can be found in the supplementary material,
Section \ref{section_oracle_proofs}.

\section{Probabilistic and Analytical Tools}
\label{section_prob_tools}

In this section we set out some general results that are of independent interest and serve as tools for the mathematical description of our statistical procedure.

The first two sections contain new or improved concentration inequalities, for the chi-square statistics (Section \ref{section_chi_square}) and for general log-densities (Section \ref{section_concentration_inequalities}). We
established in Section \ref{section_margin_like_relations} some results that are related to the so-called margin relation in statistical learning and that are analytical in nature.

\subsection{Chi-Square Statistics' Concentration}
\label{section_chi_square}

The chi-square statistics plays an essential role in the proofs related to
Section \ref{section_risks_bounds_MLE}. Let us recall its definition.

\begin{definition}
Given some histogram model $m$, the statistics $\chi _{n}^{2}\left( m\right) 
$ is defined by%
\begin{equation*}
\chi _{n}^{2}\left( m\right) =\int_{\mathcal{Z}}\frac{\left( \hat{f}%
_{m}-f_{m}\right) ^{2}}{f_{m}}d\mu =\sum_{I\in m}\frac{\left( P_{n}\left(
I\right) -P\left( I\right) \right) ^{2}}{P\left( I\right) }\text{ .}
\end{equation*}
\end{definition}

The following proposition provides an improvement upon the previously best known concentration inequality for the right tail of the chi-square
statistics, available in \cite{Castellan:03}---see also \cite[Proposition 7.8]{Massart:07} and \cite[Theorem 12.13]{BouLugMas:13}.

\begin{proposition}
\label{prop_upper_dev_chi_2} For any $x,\theta >0$, it holds 
\begin{equation}
\mathbb{P}\left( \chi _{n}\left( m\right) \boldsymbol{1}_{\Omega _{m}\left(
\theta \right) }\geq \sqrt{\frac{D_{m}}{n}}+\left( 1+\sqrt{2\theta }+\frac{%
\theta }{6}\right) \sqrt{\frac{2x}{n}}\right) \leq \exp \left( -x\right) 
\text{ ,}  \label{upper_chi_part}
\end{equation}%
where we set $\Omega _{m}\left( \theta \right) =\bigcap_{I\in m}\left\{
\left\vert P_{n}\left( I\right) -P\left( I\right) \right\vert \leq \theta
P\left( I\right) \right\} $. More precisely, for any $x,\theta >0$, it holds
with probability at least $1-e^{-x}$,%
\begin{equation}
\chi _{n}\left( m\right) \boldsymbol{1}_{\Omega _{m}\left( \theta \right) }<%
\sqrt{\frac{D_{m}}{n}}+\sqrt{\frac{2x}{n}}+2\sqrt{\frac{\theta }{n}}\left( 
\sqrt{x}\wedge \left( \frac{xD_{m}}{2}\right) ^{1/4}\right) +\frac{\theta }{3%
}\sqrt{\frac{x}{n}}\left( \sqrt{\frac{x}{D_{m}}}\wedge \frac{1}{\sqrt{2}}%
\right) \text{ .}  \label{upper_chi_gene}
\end{equation}
\end{proposition}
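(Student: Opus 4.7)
The plan is to express $\chi_n(m)$ as the supremum of an empirical process indexed by the Euclidean unit ball of $\mathbb{R}^{\Lambda_m}$ and to apply Bousquet's sharpening of Talagrand's concentration inequality, using the event $\Omega_m(\theta)$ to replace the (generally very large) deterministic $L^\infty$ bound on the class by an effective one of order $\theta$.

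\textbf{Variational formula and moments.} Since $\{\mathbf{1}_I/\sqrt{P(I)}\}_{I\in\Lambda_m}$ is orthonormal in $L^2(P)$,
\begin{equation*}
\chi_n(m)=\sup_{a\in B_2}(P_n-P)f_a,\qquad f_a=\sum_{I\in\Lambda_m}a_I\,\frac{\mathbf{1}_I}{\sqrt{P(I)}},
\end{equation*}
where $B_2$ is the Euclidean unit ball of $\mathbb{R}^{\Lambda_m}$. A direct computation gives $\mathbb{E}[\chi_n^2(m)]=n^{-1}\sum_I(1-P(I))\leq D_m/n$, hence $\mathbb{E}[\chi_n(m)]\leq \sqrt{D_m/n}$ by Jensen, and $\sup_{a\in B_2}\var_P(f_a)\leq 1$. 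Moreover, Cauchy--Schwarz gives on $\Omega_m(\theta)$ the pathwise estimate $\chi_n(m)\leq \theta\sup_{a\in B_2}\sum_I|a_I|\sqrt{P(I)}\leq \theta$, so $\chi_n(m)\mathbf{1}_{\Omega_m(\theta)}\leq \theta$ almost surely.

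\textbf{Bousquet with effective boundedness $\theta$.} I would then apply Bousquet's inequality (see Massart, Th.~5.4, or Boucheron--Lugosi--Massart, Th.~12.5) to a truncated version of $\{f_a\}_{a\in B_2}$, engineered so that on $\Omega_m(\theta)$ the truncated supremum coincides with $\chi_n(m)$ while the centered truncated functions admit a deterministic $L^\infty$ bound of order $\theta$. With $\sigma^2\leq 1$, $b\propto\theta$ and $\mathbb{E}[Z]\leq \sqrt{D_m/n}$, Bousquet's inequality yields, with probability at least $1-e^{-x}$,
\begin{equation*}
\chi_n(m)\mathbf{1}_{\Omega_m(\theta)}\leq \sqrt{\tfrac{D_m}{n}}+\sqrt{\tfrac{2x}{n}\bigl(1+2\theta\sqrt{\tfrac{D_m}{n}}\bigr)}+\tfrac{\theta x}{3n}.
\end{equation*}
Splitting the middle square root via $\sqrt{u+v}\leq \sqrt{u}+\sqrt{v}$ and converting the Bernstein remainder $\theta x/(3n)$ onto the $\sqrt{x/n}$ scale using $x/n\leq \sqrt{x/(2n)}$ (valid in the relevant range) recovers the coefficient $1+\sqrt{2\theta}+\theta/6$ of \eqref{upper_chi_part}.

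\textbf{Refined bound and main obstacle.} For the sharper form \eqref{upper_chi_gene}, I would interpolate the Bousquet bound with the pathwise $\chi_n(m)\mathbf{1}_{\Omega_m(\theta)}\leq\theta$: when the Bousquet cross-term $2\sqrt{\theta x/n}$ exceeds $2\sqrt{\theta/n}(xD_m/2)^{1/4}$ (i.e.\ when $x\geq D_m/2$), the pathwise cap forces the smaller value, and symmetrically for the Bernstein term, which is replaced by $(\theta/(3\sqrt{2}))\sqrt{x/n}$ in the same regime; taking the two minima produces the claimed form. The delicate point in the whole argument is the truncation step: Bousquet's inequality requires a deterministic $L^\infty$ bound on every centered function in the class, while the natural one, $1/\sqrt{\min_I P(I)}$, is typically much larger than $\theta$. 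Replacing it by $\theta$ is legitimate only on $\Omega_m(\theta)$, so the truncation must be arranged so that the suprema agree on this good event while the truncated class is globally $\theta$-bounded; this is precisely why the indicator $\mathbf{1}_{\Omega_m(\theta)}$ enters the final statement rather than a clean concentration inequality for $\chi_n(m)$ itself. Achieving the sharp constants further requires using Bousquet in its sharpest form and careful accounting of each factor when splitting the square root.
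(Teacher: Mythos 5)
Your setup is exactly right---the orthonormal basis $\varphi_I=\mathbf{1}_I/\sqrt{P(I)}$, the variational representation of $\chi_n(m)$, the moment bounds $\mathbb{E}[\chi_n^2(m)]\le D_m/n$ and $\sigma^2\le 1$, Bousquet as the key tool---and the pathwise observation $\chi_n(m)\mathbf{1}_{\Omega_m(\theta)}\le\theta$ is correct (the paper doesn't even use it). But the argument has a genuine gap at the ``truncation'' step, and this gap is precisely where the non-trivial content of the proposition lies.

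You want a truncated class that is deterministically $\theta$-bounded in $L^\infty$ and whose supremum coincides with $\chi_n(m)$ on $\Omega_m(\theta)$. No such truncation exists: by the equality case in Cauchy--Schwarz the supremizer is $a_I^\infty=(P_n-P)(\varphi_I)/\chi_n(m)$, and on $\Omega_m(\theta)$ one has $\lVert\sum_I a_I^\infty\varphi_I\rVert_\infty\le\theta/\chi_n(m)$, which blows up exactly when $\chi_n(m)$ is small. The paper's trick is to introduce an auxiliary level $z>0$, consider the enlarged index set $\mathcal{A}(\theta/z)=B_2\cap\{\sup_I|a_I|/\sqrt{P(I)}\le\theta/z\}$, and observe that on $\Omega_m(\theta)\cap\{\chi_n(m)\ge z\}$ the supremizer lies in $\mathcal{A}(\theta/z)$, yielding $\chi_n(m)\mathbf{1}_{\Omega_m(\theta)}\mathbf{1}_{\{\chi_n(m)\ge z\}}\le\sup_{\mathcal{A}(\theta/z)}|\cdot|$. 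Bousquet is then applied with envelope $b=\theta/z$ (not $\theta$), and $z$ is set to $\sqrt{D_m/n}+\sqrt{2x/n}$, the main term of the bound, so that the Bernstein remainder becomes $\theta x/(3z\,n)=\theta x/\bigl(3\sqrt{n}(\sqrt{D_m}+\sqrt{2x})\bigr)\le(\theta/3)\sqrt{x/(2n)}$ \emph{for every} $x>0$. Your version, with $b=\theta$, produces the remainder $\theta x/(3n)$, and the conversion $x/n\le\sqrt{x/(2n)}$ that you invoke is only valid for $x\le n/2$, as you concede; the pathwise cap $\theta$ does not close the gap (e.g.\ for $x$ slightly above $n/2$ and $\theta$ moderately large the right-hand side of~\eqref{upper_chi_part} can fall below $\theta$). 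Moreover, once $z$ is in place, the free parameter $\delta$ in Bousquet---chosen as $\delta=\sqrt{\theta x}(D_m+\sqrt{2xD_m})^{-1/2}$ to balance the $\delta E_m$ term against $b_m x/(\delta n)$---directly delivers the two $\wedge$'s in~\eqref{upper_chi_gene} by elementary algebra; no interpolation with the pathwise bound is needed or, as described, justified. So the missing idea is the coupling of the restricted index set $\mathcal{A}(\theta/z)$ with the event $\{\chi_n(m)\ge z\}$ at the data-free level $z=\sqrt{D_m/n}+\sqrt{2x/n}$.
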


The proof of Theorem \ref{prop_upper_dev_chi_2} can be found in Section \ref%
{ssection_chisquare_proof}.

Let us detail its relationship with the bound in Proposition 7.8 in \cite%
{Massart:07}, which is: for any $x,\varepsilon >0$,%
\begin{equation}
\mathbb{P}\left( \chi _{n}\left( m\right) \boldsymbol{1}_{\Omega _{m}\left(
\varepsilon ^{2}/\left( 1+\varepsilon /3\right) \right) }\geq \left(
1+\varepsilon \right) \left( \sqrt{\frac{D_{m}}{n}}+\sqrt{\frac{2x}{n}}%
\right) \right) \leq \exp \left( -x\right) \text{ }.  \label{bound_Massart}
\end{equation}%
By taking $\theta =\varepsilon ^{2}/\left( 1+\varepsilon /3\right) >0$, we
get $\varepsilon =\theta /6+\sqrt{\theta ^{2}/36+\theta }>\theta /6+\sqrt{%
\theta }>0$. Assume that $D_{m}\geq 2x$. We obtain by (\ref{upper_chi_part}%
), with probability at least $1-\exp \left( -x\right) $,%
\begin{align*}
\chi _{n}\left( m\right) \boldsymbol{1}_{\Omega _{m}\left( \theta \right)
}<& \sqrt{\frac{D_{m}}{n}}+\left( 1+\sqrt{2\theta }+\frac{\theta }{6}\right) 
\sqrt{\frac{2x}{n}} \\
\leq & \sqrt{\frac{D_{m}}{n}}+\left( 1+\sqrt{2}\varepsilon \right) \sqrt{%
\frac{2x}{n}} \\
<& \left( 1+\varepsilon \right) \left( \sqrt{\frac{D_{m}}{n}}+\sqrt{\frac{2x%
}{n}}\right) \text{ .}
\end{align*}%
So in this case, inequality (\ref{upper_chi_part}) improves upon (\ref%
{bound_Massart}). Notice that in our statistical setting (see Theorem \ref%
{opt_bounds}) the restriction on $D_{m}$ is as follows $D_{m}\leq
A_{+}n\left( \ln (n+1)\right) ^{-1}$. Furthermore, in our proofs we apply (%
\ref{upper_chi_part}) with $x$ proportional to $\ln (n+1)$ (see the proof of
Theorem \ref{opt_bounds}). Hence, for a majority of models, we have $x\ll
D_{m}$ and so 
\begin{equation*}
\sqrt{\frac{2x}{n}}\ll \sqrt{\frac{D_{m}}{n}}\text{ .}
\end{equation*}%
As a result, the bounds that we obtain in Theorem \ref{opt_bounds} by the
use of Inequality (\ref{upper_chi_part}) are substantially better than the
bounds we would obtain by using Inequality (\ref{bound_Massart}) of \cite%
{Massart:07}. More precisely, the term $\sqrt{D_{m}\ln (n+1)/n}$ in (\ref%
{def_A_0_MLE-1}) would be replaced by $\left( D_{m}\ln (n+1)/n\right) ^{1/4}$%
, thus changing the order of magnitude for deviations of the excess risks.

Now, if $D_{m}\leq 2x$ then (\ref{upper_chi_part}) gives that with
probability at least $1-e^{-x}$,%
\begin{align*}
\chi _{n}\left( m\right) \boldsymbol{1}_{\Omega _{m}\left( \theta \right) }
<&\sqrt{\frac{D_{m}}{n}}+\sqrt{\frac{2x}{n}}+2\sqrt{\frac{\theta }{n}}\left( 
\frac{xD_{m}}{2}\right) ^{1/4}+\frac{\theta }{6}\sqrt{\frac{2x}{n}} \\
\leq &\left( 1+\frac{\sqrt{\theta }}{2^{1/4}}\right) \sqrt{\frac{D_{m}}{n}}%
+\left( 1+\frac{\sqrt{\theta }}{2^{3/4}}+\frac{\theta }{6}\right) \sqrt{%
\frac{2x}{n}} \\
<&\left( 1+\varepsilon \right) \left( \sqrt{\frac{D_{m}}{n}}+\sqrt{\frac{2x}{%
n}}\right) \text{ .}
\end{align*}%
So in this case again, inequality (\ref{upper_chi_gene}) improves upon (\ref%
{bound_Massart}), with an improvement that can be substantial depending on
the value of the ratio $x/D_{m}$.

The following result describes the concentration from the left of the
chi-square statistics and is proved in Section \ref{ssection_chisquare_proof}%
.

\begin{proposition}
\label{prop_dev_gauche_chi}Let $\alpha ,$ $A_{\Lambda }>0$. Assume $%
0<A_{\Lambda }\leq D_{m}\inf_{I\in m}\left\{ P\left( I\right) \right\} $.
Then there exists a positive constant $A_{g}$ depending only on$\,A_{\Lambda
}$ and$\,\alpha $ such that 
\begin{equation*}
\mathbb{P}\left( \chi _{n}\left( m\right) \leq \left( 1-A_{g}\left( \sqrt{%
\frac{\ln (n+1)}{D_{m}}}\vee \frac{\sqrt{\ln (n+1)}}{n^{1/4}}\right) \right) 
\sqrt{\frac{D_{m}}{n}}\right) \leq \left( n+1\right) ^{-\alpha }\text{ }.
\end{equation*}
\end{proposition}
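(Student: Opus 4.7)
My plan is to combine three ingredients: (i) a variational representation of $\chi_n(m)$ as the supremum of a uniformly bounded empirical process, (ii) a sharp lower bound on its expectation coming from the exact second-moment identity, and (iii) a Klein-Rio type left-tail concentration inequality. For (i), I use $\chi_n(m) = \sup_{\phi \in \mathcal{B}_m}(P_n - P)(\phi)$, where $\mathcal{B}_m$ is the $L^2(P)$-unit ball of the linear span of $\{\mathbbm{1}_I : I \in \Lambda_m\}$. The lower regularity hypothesis $D_m \inf_{I \in \Lambda_m} P(I) \geq A_\Lambda$ then yields the uniform bound $b := \sup_{\phi \in \mathcal{B}_m}\|\phi\|_\infty \leq \sqrt{D_m/A_\Lambda}$ together with the weak variance $\sigma^2 := \sup_{\phi \in \mathcal{B}_m} P\phi^2 \leq 1$.

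For (ii), direct computation gives $\mathbb{E}[\chi_n^2(m)] = \sum_{I \in \Lambda_m}(1 - P(I))/n = D_m/n$ (using that $nP_n(I)$ is Binomial$(n, P(I))$). The Talagrand-Bousquet variance bound $\mathrm{Var}(\chi_n(m)) \leq C(\sigma^2 + b\,\mathbb{E}[\chi_n(m)])/n$, combined with the Jensen upper bound $\mathbb{E}[\chi_n(m)] \leq \sqrt{D_m/n}$, produces
$$\mathbb{E}[\chi_n(m)]^2 \geq \frac{D_m}{n} - \frac{C}{n} - \frac{C D_m}{\sqrt{A_\Lambda}\, n^{3/2}},$$
hence $\mathbb{E}[\chi_n(m)] \geq \sqrt{D_m/n}\,(1 - C/D_m - C/\sqrt{A_\Lambda n})$, a correction strictly smaller than the target rate. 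For (iii), I invoke the Klein-Rio left-tail inequality: with probability at least $1-e^{-x}$,
$$\chi_n(m) \geq \mathbb{E}[\chi_n(m)] - \sqrt{\frac{2(\sigma^2 + 2b\,\mathbb{E}[\chi_n(m)])\,x}{n}} - \frac{bx}{3n}.$$
Plugging $x = \alpha\ln(n+1)$ and dividing by $\sqrt{D_m/n}$, the $\sigma^2$ contribution produces the relative error $\sqrt{2\alpha \ln(n+1)/D_m}$, while the cross term, bounded via $b\,\mathbb{E}[\chi_n(m)] \leq D_m/\sqrt{A_\Lambda n}$, produces a relative error of order $\sqrt{\alpha \ln(n+1)}/(A_\Lambda^{1/4}\,n^{1/4})$, matching the second rate in the statement. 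The sub-exponential term $bx/n$ is of lower order in all admissible regimes of $D_m$.

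The hardest part will be careful bookkeeping across the full range of admissible $D_m$: one must verify that the cross term in Klein-Rio genuinely produces the rate $\sqrt{\ln(n+1)}/n^{1/4}$ with a single constant $A_g$ depending only on $\alpha$ and $A_\Lambda$, and that both the mean correction from (ii) and the sub-exponential term remain subordinate to the Klein-Rio Gaussian-type deviation in every regime. Obtaining the variance bound in (ii) without a circular dependence on $\mathbb{E}[\chi_n(m)]$ requires one iteration of the Talagrand-Bousquet inequality, replacing the expectation on the right-hand side by its Jensen upper bound $\sqrt{D_m/n}$.
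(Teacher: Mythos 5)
Your proposal is correct and follows essentially the same route as the paper's proof: the variational representation of $\chi_n(m)$ over the $L^2(P)$-unit ball with $b_m \leq \sqrt{D_m/A_\Lambda}$ and $\sigma_m^2 \leq 1$, the self-bounding variance inequality combined with the Jensen bound $\mathbb{E}[\chi_n(m)] \leq \sqrt{D_m/n}$ to lower-bound the mean (the paper resolves the implicit quadratic via $2ab \leq \zeta a^2 + \zeta^{-1}b^2$ with $\zeta = n^{-1/2}$, which is equivalent to your one-step substitution), and a Klein--Rio left-tail inequality with $x = \alpha\ln(n+1)$. The only cosmetic difference is that the paper uses the $\delta$-parameterized form of Klein--Rio and picks $\delta = n^{-1/4}\sqrt{\ln(n+1)}$, whereas you use the form where the cross term $2b\,\mathbb{E}[\chi_n(m)]$ sits inside the variance; these are interchangeable by the same AM--GM step, so both yield the two rates $\sqrt{\ln(n+1)/D_m}$ and $\sqrt{\ln(n+1)}/n^{1/4}$ with a constant depending only on $\alpha$ and $A_\Lambda$.
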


\subsection{Bernstein type concentration inequalities for log-densities\label%
{section_concentration_inequalities}}

The following proposition gives concentration inequalities for the empirical
bias at the right of its mean.

\begin{proposition}
\label{delta_bar_dev_droite_MLE} Consider a density $f\in \mathcal{S}$. We
have, for all $z\geq 0$, 
\begin{equation}
\mathbb{P}\left( P_{n}\left( \ln \left( \left. f\right/ f_{\ast }\right)
\right) \geq \frac{z}{n}\right) \leq \exp \left( -z\right) \text{ .}
\label{dev_droite_gene_2}
\end{equation}%
Moreover, if we can take a finite quantity $v$ which satisfies $v\geq \int
\left( f\vee f_{\ast }\right) \left( \ln \left( \frac{f}{f_{\ast }}\right)
\right) ^{2}d\mu $, we have for all $z\geq 0$,%
\begin{equation}
\mathbb{P}\left( \left( P_{n}-P\right) \left( \ln \left( \left. f\right/
f_{\ast }\right) \right) \geq \sqrt{\frac{2vz}{n}}+\frac{2z}{n}\right) \leq
\exp \left( -z\right) \text{ .}  \label{bernstein_dev_droite_2}
\end{equation}
\end{proposition}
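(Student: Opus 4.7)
The plan rests on Cramér--Chernoff applied to $g := \ln(f/f_\ast)$, evaluated at two different regimes of the Laplace parameter $\lambda$.

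\medskip

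\textbf{Step 1 (the simple Chernoff bound \eqref{dev_droite_gene_2}).} Apply Markov's inequality to the moment generating function of $nP_ng = \sum_i g(\xi_i)$ at $\lambda = 1$:
\[
\mathbb{P}\!\left(P_n g \geq z/n\right) \leq e^{-z}\,\bigl(\mathbb{E}[e^g]\bigr)^n.
\]
The one-dimensional MGF equals $\int_{\{f_\ast>0\}}(f/f_\ast)\,f_\ast\,d\mu = \int_{\{f_\ast>0\}}f\,d\mu \leq 1$, which gives the result.

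\medskip

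\textbf{Step 2 (refined MGF for \eqref{bernstein_dev_droite_2}).} Here the aim is a sub-Gaussian-type bound on the Laplace transform, valid for $\lambda \in [0,1]$. Set $\phi(\lambda) := \mathbb{E}[e^{\lambda g}] = \int f^{\lambda}f_\ast^{1-\lambda}\,d\mu$, which satisfies $\phi(0)=\phi(1)=1$ and by H\"older $\phi(\lambda)\leq 1$ on $[0,1]$. I would differentiate twice under the integral sign (justified by $v<\infty$ via Cauchy--Schwarz) to obtain $\phi'(0)=Pg$ and
\[
\phi''(\lambda)=\int f^{\lambda}f_\ast^{1-\lambda}\,g^2\,d\mu.
\]
The crucial pointwise estimate is $f^{\lambda}f_\ast^{1-\lambda} \leq f\vee f_\ast$ for every $\lambda\in[0,1]$, which is immediate by splitting on $\{f\geq f_\ast\}$ and $\{f<f_\ast\}$. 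Therefore $\phi''(\lambda)\leq v$ uniformly on $[0,1]$. Taylor expansion at $0$ with integral remainder, combined with $1+x\leq e^x$, then yields
\[
\phi(\lambda)\leq 1+\lambda Pg+\tfrac{1}{2}\lambda^2 v \leq \exp\!\bigl(\lambda Pg+\tfrac{1}{2}\lambda^2 v\bigr),
\]
whence $\ln\mathbb{E}[e^{\lambda(g-Pg)}]\leq \lambda^2 v/2$ for all $\lambda\in[0,1]$.

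\medskip

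\textbf{Step 3 (Legendre inversion).} By i.i.d.\ tensorisation, $\ln\mathbb{E}[e^{\lambda(S_n-nPg)}]\leq n\lambda^2 v/2$ for $\lambda\in[0,1]$, where $S_n=\sum_i g(\xi_i)$, so Chernoff gives $\mathbb{P}(S_n-nPg\geq t)\leq \exp(-\lambda t+n\lambda^2 v/2)$ for $\lambda\in(0,1]$. Plug $t=\sqrt{2nvz}+2z$ and split cases: if $t\leq nv$, the optimal choice $\lambda=t/(nv)\in(0,1]$ turns the exponent into $-t^2/(2nv)\leq -z$, since $t^2\geq 2nvz$; if $t>nv$, take the boundary value $\lambda=1$, giving exponent $-t+nv/2$, and check $t\geq z+nv/2$ directly (if $z\geq nv/2$ use $t\geq 2z$; if $z<nv/2$, the hypothesis $t>nv$ forces $\sqrt{2nvz}>nv-2z$, hence $t=\sqrt{2nvz}+2z>nv>z+nv/2$). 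Both regimes yield $e^{-z}$, which after dividing $t$ by $n$ is the Bernstein bound.

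\medskip

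\textbf{Main obstacle.} The only non-routine step is the variance control in Step~2: one must notice that the restriction $\lambda\in[0,1]$ is exactly what allows the sharp pointwise bound $f^{\lambda}f_\ast^{1-\lambda}\leq f\vee f_\ast$, giving $\phi''\leq v$ without any uniform boundedness of $g$. This is the mechanism through which the unboundedness of the log-density is absorbed into the symmetric moment $v$. The price for stopping at $\lambda=1$ is precisely the coefficient $2$ of $z/n$ in the Bernstein tail (instead of the usual $1/3$ obtained under a uniform bound on $g$).
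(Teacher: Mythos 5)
Your proof is correct and takes essentially the same Cram\`er--Chernoff route as the paper: restrict the Laplace parameter to $\lambda\in[0,1]$, show the log-MGF of $g-Pg$ is bounded by $\lambda^2 v/2$, and then invert. The only cosmetic difference is that you reach the quadratic MGF bound via Taylor with integral remainder and the pointwise inequality $f^{\lambda}f_*^{1-\lambda}\leq f\vee f_*$, whereas the paper applies the scalar inequality $e^u\leq 1+u+\tfrac{u^2}{2}e^{u_+}$ to $u=\lambda X_1$ together with $e^{\lambda(X_1)_+}\leq e^{(X_1)_+}$; the two devices are equivalent and produce the identical variance term.
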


One can notice, with Inequality (\ref{dev_droite_gene_2}), that the
empirical bias always satisfies some exponential deviations at the right of
zero. In the Information Theory community, this inequality is also known as
the ``No Hyper-compression Inequality'' (\cite{grunwald2007minimum}).

Inequality (\ref{bernstein_dev_droite_2}) seems to be new and takes the form
of a Bernstein-like inequality, even if the usual assumptions of Bernstein's
inequality are not satisfied. In fact, we are able to recover such a
behavior by inflating the usual variance to the quantity $v$.

We now turn to concentration inequalities for the empirical bias at the left of its mean.

\begin{proposition}
\label{delta_bar_dev_gauche_MLE}Let $r>0$. For any density $f\in \mathcal{S}$
and for all $z\geq 0$, we have%
\begin{equation}
\mathbb{P}\left( P_{n}\left( \ln \left( \left. f\right/ f_{\ast }\right)
\right) \leq -z/nr-\left( 1/r\right) \ln \left( P\left[ \left( \left.
f_{\ast }\right/ f\right) ^{r}\right] \right) \right) \leq \exp \left(
-z\right) \text{ }.  \label{delta_dev_gauche_2}
\end{equation}%
Moreover, if we can set a quantity $w_{r}$ which satisfies $w_{r}\geq \int
\left( \frac{f_{\ast }^{r+1}}{f^{r}}\vee f_{\ast }\right) \left( \ln \left( 
\frac{f}{f_{\ast }}\right) \right) ^{2}d\mu $ , then we get, for all $z\geq
0 $,%
\begin{equation}
\mathbb{P}\left( \left( P_{n}-P\right) \left( \ln \left( \left. f\right/
f_{\ast }\right) \right) \leq -\sqrt{\frac{2w_{r}z}{n}}-\frac{2z}{nr}\right)
\leq \exp \left( -z\right) \text{ }.  \label{delta_dev_gauche_gauss_2}
\end{equation}
\end{proposition}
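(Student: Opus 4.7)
The two inequalities follow the same template as Proposition \ref{delta_bar_dev_droite_MLE}: a Chernoff tilt delivers the raw exponential bound, and then a Bernstein-type refinement comes from a more careful control of the log-MGF of $-X$, where $X = \ln(f/f_*)$.

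For (\ref{delta_dev_gauche_2}), I would set $X_i = \ln(f(\xi_i)/f_*(\xi_i))$ and apply the Chernoff inequality to $-\sum_i X_i$ at the prescribed parameter $r$. Since $\mathbb{E}[e^{-rX}] = \int f_*(f_*/f)^r\, d\mu = P[(f_*/f)^r]$, one obtains $\mathbb{P}(P_n(\ln(f/f_*)) \leq -t) \leq \exp(-nrt)(P[(f_*/f)^r])^n$, and choosing $t = z/(nr) + r^{-1}\ln P[(f_*/f)^r]$ makes the right-hand side exactly $e^{-z}$. This part is essentially automatic.

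For (\ref{delta_dev_gauche_gauss_2}), the plan is to upgrade the Chernoff step to a Bernstein-type bound by expanding the MGF and harvesting the variance proxy $w_r$. Using the identity $e^y = 1 + y + y^2\int_0^1 (1-u) e^{uy}\, du$ with $y = -\lambda X$ and $\lambda \in [0,r]$, taking expectation under $P$ gives
\begin{equation*}
\mathbb{E}[e^{-\lambda X}] = 1 - \lambda P X + \lambda^2 \int_0^1 (1-u)\, \mathbb{E}[X^2 e^{-u\lambda X}]\, du,
\end{equation*}
and the key observation is that $\mathbb{E}[X^2 e^{-u\lambda X}] = \int (f_*^{1+u\lambda}/f^{u\lambda})(\ln(f/f_*))^2 d\mu$ is bounded by $w_r$ for every $u \in [0,1]$. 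This rests on the pointwise inequality $f_*^{1+u\lambda}/f^{u\lambda} \leq (f_*^{r+1}/f^r) \vee f_*$, which holds by a simple case split: if $f_* \geq f$ then $(f_*/f)^{u\lambda} \leq (f_*/f)^r$, and if $f_* < f$ then $(f_*/f)^{u\lambda} \leq 1$. Combined with $\ln(1+x) \leq x$, this yields the log-MGF bound $\ln \mathbb{E}[e^{-\lambda X}] + \lambda P X \leq \lambda^2 w_r/2$ for every $\lambda \in [0,r]$.

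It then remains to optimize the Chernoff parameter over $[0,r]$ in $\mathbb{P}((P_n-P)X \leq -s) \leq \exp(-n\lambda s + n\lambda^2 w_r/2)$. Taking $\lambda = \min(r, \sqrt{2z/(n w_r)})$ produces the sub-Gaussian branch $s = \sqrt{2 w_r z/n}$ in the regime $z \leq nr^2 w_r/2$ and the sub-exponential branch $s = z/(nr) + r w_r/2$ otherwise; a direct comparison shows both are dominated by $s = \sqrt{2 w_r z/n} + 2z/(nr)$, which is the quantity stated in (\ref{delta_dev_gauche_gauss_2}). The main obstacle will be identifying the correct variance proxy: once the ``$\vee$'' in the definition of $w_r$ is recognized as the natural envelope of $f_*^{1+u\lambda}/f^{u\lambda}$ over $u \in [0,1]$ and $\lambda \leq r$, everything else is routine.
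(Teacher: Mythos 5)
Your proof is correct and follows the same Cramér--Chernoff route as the paper, with the identical key step of bounding the log-MGF of $-\lambda X$ (with $X=\ln(f/f_*)$) by $\lambda^2 w_r/2$ for $\lambda\in[0,r]$ via the pointwise envelope $f_*^{1+u\lambda}/f^{u\lambda}\leq (f_*^{r+1}/f^r)\vee f_*$. The only cosmetic differences are that you use the exact integral Taylor remainder whereas the paper uses the coarser pointwise bound $\exp(u)\leq 1+u+\tfrac{u^2}{2}\exp(u_+)$ (both produce the same estimate after taking expectations), and for the first inequality you plug in $\lambda=r$ directly, while the paper bounds the MGF for all $\lambda\in[0,r]$ via H\"older and then optimizes --- the two routes coincide since the resulting bound is linear in $\lambda$.
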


\subsection{Margin-Like Relations}
\label{section_margin_like_relations}

Our objective in this section is to control the variance terms $v$ and $w_{r}$,
appearing respectively in Lemma \ref{delta_bar_dev_droite_MLE}\ and \ref%
{delta_bar_dev_gauche_MLE}, in terms of the KL divergence pointed on the
target $f_{\ast }$. This is done in Lemma \ref{lem:margin_like_unbounded}\
below under moment assumptions for $f_{\ast }$.

\begin{proposition}
\label{lem:margin_like_unbounded}Let $p>1$ and $c_{+},c_{-}>0$.\textup{\ }%
Assume that the density $f_{\ast }$ satisfies%
\begin{equation}
J:=\int_{\mathcal{Z}}f_{\ast }^{p}\left( \left( \ln \left( f_{\ast }\right)
\right) ^{2}\vee 1\right) d\mu <+\infty \text{ and }Q:=\int_{\mathcal{Z}}%
\frac{\left( \ln \left( f_{\ast }\right) \right) ^{2}\vee 1}{f_{\ast }^{p-1}}%
d\mu <+\infty  \label{moment_p_bis}
\end{equation}%
Take a density $f$ such that $0<c_{-}\leq {\inf_{z\in \mathcal{Z}}\left\{
f\left( z\right) \right\} \leq \sup_{z\in \mathcal{Z}}}\left\{ f\left(
z\right) \right\} \leq c_{+}<+\infty $. Then, for some $A_{MR,d}>0$ only
depending on $J,Q,p,c_{+}$ and $c_{-}$, it holds%
\begin{equation}
P\left[ \left( \frac{f}{f_{\ast }}\vee 1\right) \left( \ln \left( \frac{f}{%
f_{\ast }}\right) \right) ^{2}\right] \leq A_{MR,d}\times \mathcal{K}\left(
f_{\ast },f\right) ^{1-\frac{1}{p}}\text{ .}
\label{margin_like_soft_droite_gene}
\end{equation}%
More precisely, $A_{MR,d}=\left( 4c_{-}^{1-p}\left( \left( \ln c_{-}\right)
^{2}\vee 1\right) J+4c_{+}^{p}\left( \left( \ln c_{+}\right) ^{2}\vee
1\right) Q\right) ^{1/p}$ holds. For any $0<r\leq p-1$, we have the
following inequality,%
\begin{equation}
P\left[ \left( \frac{f_{\ast }}{f}\vee 1\right) ^{r}\left( \ln \left( \frac{f%
}{f_{\ast }}\right) \right) ^{2}\right] \leq A_{MR,g}\times \mathcal{K}%
\left( f_{\ast },f\right) ^{1-\frac{r+1}{p}}\text{ ,}
\label{margin_like_soft_gauche_gene}
\end{equation}%
available with $A_{MR,g}=\left( 4c_{-}^{1-p}\left( \left( \ln c_{-}\right)
^{2}\vee 1\right) J+2\left( \left( \ln \left( c_{+}\right) \right)
^{2}+J+Q\right) \right) ^{\frac{r+1}{p}}$.
\end{proposition}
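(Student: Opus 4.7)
The idea is to apply Hölder's inequality in a way that one factor produces $\mathcal{K}(f_\ast,f)^{(p-1)/p}$ (resp.\ $\mathcal{K}(f_\ast,f)^{1-(r+1)/p}$) while the other is absorbed by the moment integrals $J$ and $Q$ together with the uniform bounds $c_- \leq f \leq c_+$. Setting $u = f/f_\ast$ and using $\int f\,d\mu = \int f_\ast\,d\mu = 1$, we have $P[u-1]=0$, hence
\begin{equation*}
\mathcal{K}(f_\ast,f) = P[-\ln u] = P[\phi(u)], \qquad \phi(u) := u - 1 - \ln u \geq 0.
\end{equation*}
This representation of $\mathcal{K}$ as a $P$-expectation of a nonnegative function is what makes it available as a Hölder factor.

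For the first inequality, I apply Hölder with conjugate exponents $p$ and $p/(p-1)$ in the form
\begin{equation*}
P\bigl[(u \vee 1)(\ln u)^2\bigr] \leq \Bigl( P\bigl[(u \vee 1)^p (\ln u)^{2p}/\phi(u)^{p-1}\bigr] \Bigr)^{1/p}\, \mathcal{K}(f_\ast,f)^{(p-1)/p},
\end{equation*}
where the quotient is extended by continuity at $u=1$ since $(\ln u)^2 \sim 2\phi(u)$ there. It then remains to bound the residual moment by $A_{MR,d}^p$. I would split $\mathcal{Z}$ into $\{u<1\}$ and $\{u \geq 1\}$. On $\{u<1\}$ we have $f_\ast \geq f \geq c_-$, and using the asymptotic $\phi(u) \gtrsim |\ln u|$ as $u \to 0$ together with the pointwise bound $|\ln u| \leq |\ln c_-| + |\ln f_\ast|$, the contribution is absorbed into a constant multiple of $c_-^{1-p}((\ln c_-)^2 \vee 1)\,J$, the tail $f_\ast \to \infty$ matching the weight $f_\ast^p$ inside $J$. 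On $\{u \geq 1\}$ we have $f_\ast \leq f \leq c_+$, and using $\phi(u) \gtrsim u$ as $u \to \infty$ with $|\ln u| \leq |\ln c_+| + |\ln f_\ast|$, the contribution is absorbed into a constant multiple of $c_+^p((\ln c_+)^2 \vee 1)\,Q$, the tail $f_\ast \to 0$ matching the weight $1/f_\ast^{p-1}$ inside $Q$.

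The second inequality follows by the same scheme with Hölder exponents $p/(r+1)$ and $p/(p-r-1)$; both are $>1$ under $0 < r \leq p-1$, with the boundary case $r = p-1$ handled by a limiting argument. The roles of $J$ and $Q$, and of $c_+$ and $c_-$, swap because of the reciprocal form $(1/u \vee 1)^r$ in the integrand. The main obstacle is that the residual Hölder factor contains $(\ln u)^{2p}$ (or $(\ln u)^{2p/(r+1)}$ in the second case), whereas $J$ and $Q$ only control the second power of $\ln f_\ast$; the gap is bridged by the elementary log-power inequality $|\ln x|^k \leq C_{k,\varepsilon}\, x^{\pm\varepsilon}$, which trades excess log-powers against small powers of $f_\ast$ that are then absorbed by the weights $\phi(u)^{-(p-1)}$ and $(u \vee 1)^p$ already present. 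This careful region-by-region bookkeeping is what produces the explicit constants appearing in $A_{MR,d}$ and $A_{MR,g}$.
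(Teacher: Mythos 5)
Your proposal follows the same high-level strategy as the paper---H\"older's inequality to extract a factor $\mathcal{K}(f_\ast,f)^{1-1/p}$ (resp.\ $\mathcal{K}(f_\ast,f)^{1-(r+1)/p}$), with the residual bounded via $J$, $Q$ and the two-sided bounds on $f$---but the split you choose is genuinely different, and this difference costs you. The paper splits the $\mu$-integrand so that $(\ln u)^2$ is shared between the two H\"older factors: one factor carries $|\ln u|^{2/p}$ and the other $((f_\ast\wedge f)(\ln u)^2)^{1/q}$, and the latter is controlled by Massart's Lemma 7.24 ($\tfrac{1}{2}\int(f_\ast\wedge f)(\ln u)^2\,d\mu\le\mathcal{K}(f_\ast,f)$). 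Because of the balanced split, the residual H\"older moment ends up with $(\ln u)^2$ raised only to the first power, which matches the powers of $\ln f_\ast$ appearing in $J$ and $Q$ exactly; this is why the paper obtains the clean, explicit constants $A_{MR,d}$ and $A_{MR,g}$. You instead split off $\phi(u)^{(p-1)/p}$ with $\phi(u)=u-1-\ln u$ as the second H\"older factor, so that \emph{all} of $(\ln u)^2$ lands in the first factor and, after raising it to the $p$-th power, you are left with $(\ln u)^{2p}$ (resp.\ $(\ln u)^{2p/(r+1)}$) in the residual.

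This is not fatal in principle---one can indeed trade $(\ln f_\ast)^{2p-2}\lesssim f_\ast^{\pm(p-1)}$ to come down to a second power of $\ln f_\ast$ at the cost of adjusting the $f_\ast$-weight---but two things should be flagged. First, your description of where the excess log-powers are absorbed is off: they are not absorbed by the weights $\phi(u)^{-(p-1)}$ or $(u\vee 1)^p$, but by the slack between the $f_\ast$-weight in your residual integral and the weights $f_\ast^p$ (in $J$) and $f_\ast^{1-p}$ (in $Q$). Second, and more importantly, the log-power inequality $|\ln x|^k\le C_{k,\varepsilon}x^{\pm\varepsilon}$ introduces unspecified constants, so your route cannot recover the explicit values of $A_{MR,d}$ and $A_{MR,g}$ that the proposition asserts. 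As a proof of the qualitative inequality (\ref{margin_like_soft_droite_gene})--(\ref{margin_like_soft_gauche_gene}) with ``some constant depending on $J,Q,p,c_+,c_-$,'' the sketch can likely be completed; as a proof of the stated proposition, which pins down $A_{MR,d}$ and $A_{MR,g}$, it falls short, and the paper's balanced H\"older split is what you would need to get there directly.
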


Theorem \ref{lem:margin_like_unbounded} states that the variance terms,
appearing in the concentration inequalities of Section~\ref%
{section_concentration_inequalities}, are bounded from above, under moment
restrictions on the density $f_{\ast }$, by a power less than one of the KL
divergence pointed on $f_{\ast }$. The stronger are the moment assumptions,
given in (\ref{moment_p_bis}), the closer is the power to one. One can
notice that $J$ is a restriction on large values of $f_{\ast}$, whereas $Q$
is related to values of $f_{\ast }$ around zero.

We call these inequalities ``margin-like relations'' because of their
similarity with the margin relations known first in binary classification (%
\cite{MamTsy:99}, \cite{Tsy:04a}) and then extended to empirical risk
minimization (see \cite{MassartNedelec:06}\ and \cite{Arl_Bar:2008}\ for
instance). Indeed, from a general point of view, margin relations relate the
variance of contrasted functions (logarithm of densities here) pointed on
the contrasted target to a function (in most cases, a power) of their excess
risk.

Now we're tightening the restrictions on the values of $f_{\ast }$ around zero.
Indeed, we ask in the following lemma that the target is uniformly bounded
away from zero.

\begin{proposition}
\label{lemma_margin_like_bounded_below}Let $p>1$ and $A_{\min
},c_{+},c_{-}>0 $.\textup{\ }Assume that the density $f_{\ast }$ satisfies%
\begin{equation*}
J:=\int_{\mathcal{Z}}f_{\ast }^{p}\left( \left( \ln \left( f_{\ast }\right)
\right) ^{2}\vee 1\right) d\mu <+\infty \text{ and }0<{A_{\min }\leq
\inf_{z\in \mathcal{Z}}f_{\ast }\left( z\right) }\text{ \textup{.}}
\end{equation*}%
Then there exists a positive constant $A_{MR,-}$ only depending on $A_{\min
},J,r$ and $p$ such that, for any $m\in \mathcal{M}_{n}$,%
\begin{equation}
P\left[ \left( \frac{f_{m}}{f_{\ast }}\vee 1\right) \left( \ln \left( \frac{%
f_{m}}{f_{\ast }}\right) \right) ^{2}\right] \leq A_{MR,-}\times \mathcal{K}%
\left( f_{\ast },f_{m}\right) ^{1-1/p}  \label{margin_like_soft}
\end{equation}%
and for any $0<r\leq p-1$, 
\begin{equation}
P\left[ \left( \frac{f_{\ast }}{f_{m}}\vee 1\right) ^{r}\left( \ln \left( 
\frac{f_{m}}{f_{\ast }}\right) \right) ^{2}\right] \leq A_{MR,-}\times 
\mathcal{K}\left( f_{\ast },f_{m}\right) ^{1-\frac{r+1}{p}}\text{ .}
\label{margin_like_gauche_soft}
\end{equation}%
If moreover $\ln \left( f_{\ast }\right) \in L_{\infty }\left( \mu \right) $%
, i.e. $0<{A_{\min }\leq \inf_{z\in \mathcal{Z}}}$\textup{$f_{\ast }$}${%
\left( z\right) \leq \left\Vert f_{\ast }\right\Vert _{\infty }<+\infty }$,
then there exists $\tilde{A}>0$ only depending on $r,{A_{\min }}$ and ${%
\left\Vert f_{\ast }\right\Vert _{\infty }}$ such that, for any $m\in 
\mathcal{M}_{n}$,%
\begin{equation}
P\left[ \left( \frac{f_{m}}{f_{\ast }}\vee 1\right) \left( \ln \left( \frac{%
f_{m}}{f_{\ast }}\right) \right) ^{2}\right] \vee P\left[ \left( \frac{%
f_{\ast }}{f_{m}}\vee 1\right) ^{r}\left( \ln \left( \frac{f_{m}}{f_{\ast }}%
\right) \right) ^{2}\right] \leq \tilde{A}\times \mathcal{K}\left( f_{\ast
},f_{m}\right) \text{ }.  \label{margin_like_opt}
\end{equation}
\end{proposition}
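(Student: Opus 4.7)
The plan is to specialize the arguments of Proposition \ref{lem:margin_like_unbounded} to histogram KL projections, using the extra structure of the histogram to handle the absence of a pointwise upper bound on $f_m$. First, I would establish the pointwise lower bound $f_m \geq A_{\min}$ on $\mathcal{Z}$: for any $I \in \Lambda_m$, $f_m \equiv P(I)/\mu(I)$ on $I$ and $P(I) = \int_I f_* d\mu \geq A_{\min}\mu(I)$. With this, I would verify that the moment condition on $Q$ appearing in Proposition \ref{lem:margin_like_unbounded} is automatic: splitting $\int((\ln f_*)^2 \vee 1)/f_*^{p-1} d\mu$ over $\{f_* \leq 1\}$ (where $\mu$-mass is at most $1/A_{\min}$ and the integrand is bounded by $((\ln A_{\min})^2 \vee 1)/A_{\min}^{p-1}$) and $\{f_* > 1\}$ (where $1/f_*^{p-1} \leq 1$, so the integrand is bounded by $(\ln f_*)^2 \leq f_*^p(\ln f_*)^2$), one obtains $Q \leq C(A_{\min}, p)\, J$.

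For \eqref{margin_like_gauche_soft}, using $f_m \geq A_{\min}$ I would bound $(f_*/f_m \vee 1)^r \leq A_{\min}^{-r}(f_*^r + A_{\min}^r)$ and reduce to controlling $\int f_*^{r+1}(\ln(f_m/f_*))^2 d\mu$. Applying Hölder's inequality with exponents tied to $p/(r+1)$ against the moment $J$, and interpolating with the elementary KL control $P(\ln(f_*/f_m))_+ \lesssim \mathcal{K}(f_*, f_m)$, yields the exponent $1 - (r+1)/p$ on $\mathcal{K}(f_*, f_m)$.

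The inequality \eqref{margin_like_soft} is more delicate because $f_m$ has no uniform upper bound. The key ingredient, specific to histogram projections, is that $f_m$ is the conditional expectation of $f_*$ given the $\sigma$-algebra generated by $\Lambda_m$. By Jensen's inequality, for any convex $\phi$, $\int\phi(f_m) d\mu \leq \int\phi(f_*) d\mu$; applied to $\phi(x) = x^p$ this gives $\int f_m^p d\mu \leq J$. Rewriting the left-hand side as $\int (f_m \vee f_*)(\ln(f_m/f_*))^2 d\mu$ and splitting on $\{f_m \geq f_*\}$ and $\{f_m < f_*\}$, I would apply Hölder on each piece---using the Jensen-based moment bound on $f_m$ for the first, and $J$ directly for the second---and then interpolate via a Pinsker-type inequality to reach the exponent $1 - 1/p$.

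For the bounded case \eqref{margin_like_opt}, the supremum bound $\|f_*\|_\infty < \infty$ is inherited by $f_m$ (histograms never exceed the essential supremum of $f_*$), so $f_m/f_*$ lives in the compact interval $[A_{\min}/\|f_*\|_\infty, \|f_*\|_\infty/A_{\min}]$. On this interval, $(\ln y)^2 \leq C(y-1)^2$, which combined with the Barron--Sheu inequality $\mathcal{K}(f_*, f_m) \geq \tfrac{1}{2}e^{-\|\ln(f_*/f_m)\|_\infty}\int(f_m - f_*)^2/f_* d\mu$ (recalled in the paper just above the statement) upgrades the exponent to $1$; the prefactors $f_m/f_* \vee 1$ and $(f_*/f_m \vee 1)^r$ are both uniformly bounded. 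The main obstacle will be the third step above, where threading the Jensen-based $f_m$-moment control through the Hölder interpolations without degrading the exponent requires careful bookkeeping specific to the histogram structure.
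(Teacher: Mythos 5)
The argument for \eqref{margin_like_soft} has a genuine gap at the step where you control the H\"older-conjugate moment of $f_m$. Mimicking the H\"older decomposition used in the proof of Proposition~\ref{lem:margin_like_unbounded}, the quantity that must be bounded by a constant depending only on $(A_{\min},J,p)$ --- so as to leave a factor $\mathcal{K}(f_\ast,f_m)^{1-1/p}$ --- is
\begin{equation*}
\int \frac{f_m^{p}}{f_\ast^{p-1}}\Bigl(\ln\frac{f_m}{f_\ast}\Bigr)^{2}\mathbbm{1}_{\{f_m\geq f_\ast\}}\,d\mu \;\leq\; A_{\min}\int h\Bigl(\frac{f_m}{A_{\min}}\Bigr)d\mu, \qquad h(x)=x^p(\ln x)^2 \ \ (x\geq 1),
\end{equation*}
and the logarithmic weight inside $h$ is essential. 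Your Jensen step with $\phi(x)=x^p$ gives only $\int f_m^{p}\,d\mu\leq\int f_\ast^{p}\,d\mu$, which does \emph{not} control $\int f_m^{p}(\ln f_m)^2\,d\mu$: since $f_m$ has no uniform upper bound, the logarithm cannot be recovered afterwards by a further H\"older against $\int f_m^{p}\,d\mu$ without either invoking a higher moment of $f_m$ (not available from the hypotheses) or degrading the exponent on $\mathcal{K}$ below $1-1/p$. The paper's proof resolves this in one stroke by noting that $h$ itself is convex on $[1,\infty)$, so Jensen applied to $h$ cell by cell gives $\int h(f_m/A_{\min})\,d\mu\leq\int h(f_\ast/A_{\min})\,d\mu$, which is then bounded in terms of $J$ and $A_{\min}$ alone; the analogous choice $x\mapsto x(\ln x)^2$ on $[1,\infty)$ handles \eqref{margin_like_gauche_soft}. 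Your digression verifying $Q<\infty$ from $f_\ast\geq A_{\min}$ and $J<\infty$ is correct but beside the point: even with $Q$ finite, Proposition~\ref{lem:margin_like_unbounded} still requires a uniform upper bound on $f$ and so cannot be applied with $f=f_m$. Your outline for \eqref{margin_like_opt}, in contrast, does match the paper's use of Barron--Sheu's Lemma~1 together with $A_{\min}\leq f_m\leq\left\Vert f_\ast\right\Vert_{\infty}$.
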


It is worth noting that Lemma \ref{lemma_margin_like_bounded_below} is
stated only for projections $f_{m}$ because we actually take advantage of
their special form (as local means of the target) in the proof of the lemma.
The benefit, compared to results of Lemma \ref{lem:margin_like_unbounded},
is that Inequalities (\ref{margin_like_soft}), (\ref{margin_like_gauche_soft}%
) and (\ref{margin_like_opt}) do not involve assumptions on the values of $%
f_{m}$ (and in particular they do not involve the sup-norm of $f_{m}$).

\section{Experiments}
\label{section_expe}

A simulation study is conducted to compare the numerical performance of the model selection procedures we discussed. We demonstrate the usefulness of our procedure on simulated data examples. The numerical experiments were performed using R. 

\begin{figure}[t]
\centering
\includegraphics[width=0.4\textwidth]{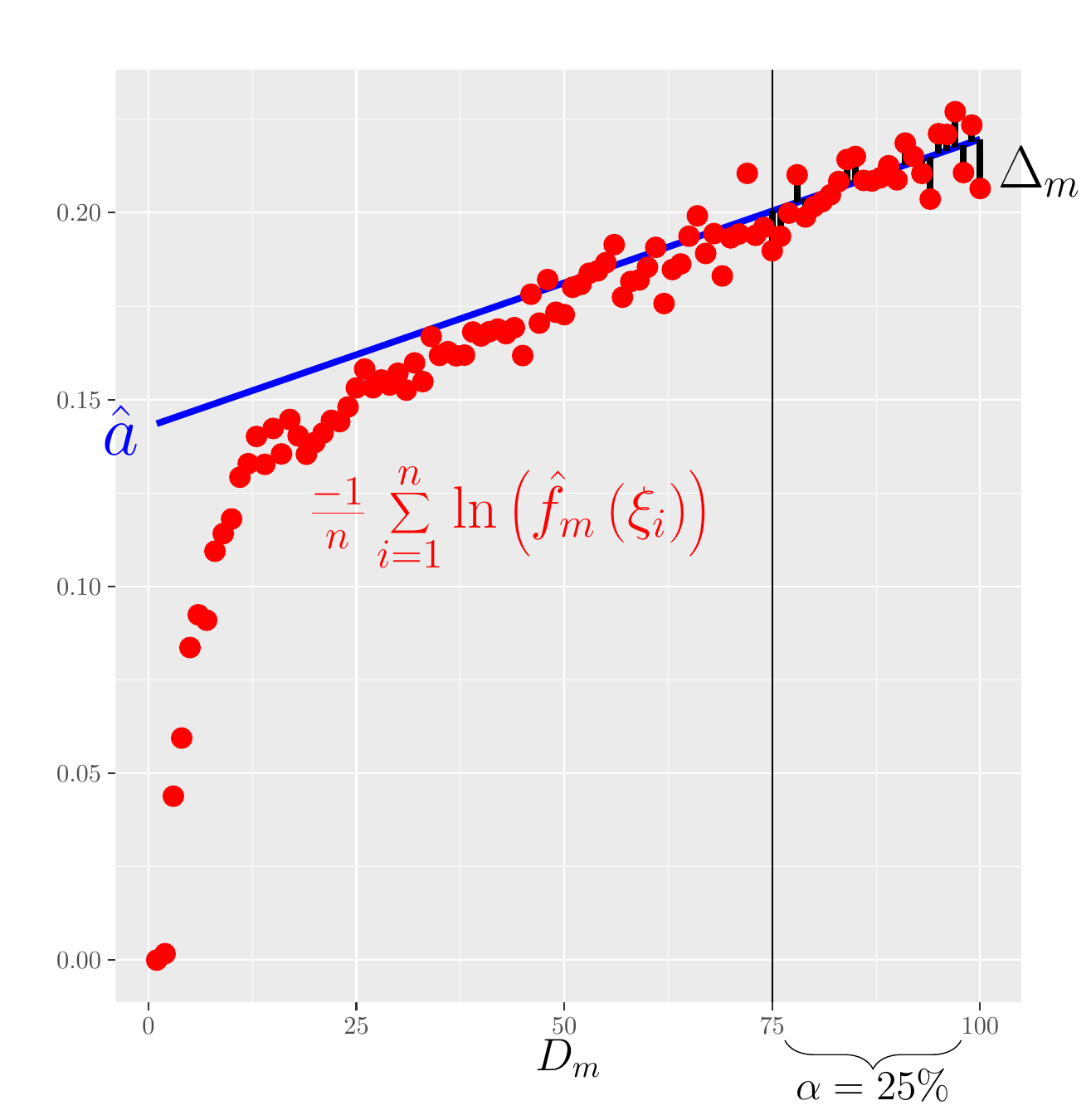}
\includegraphics[width=0.4\textwidth]{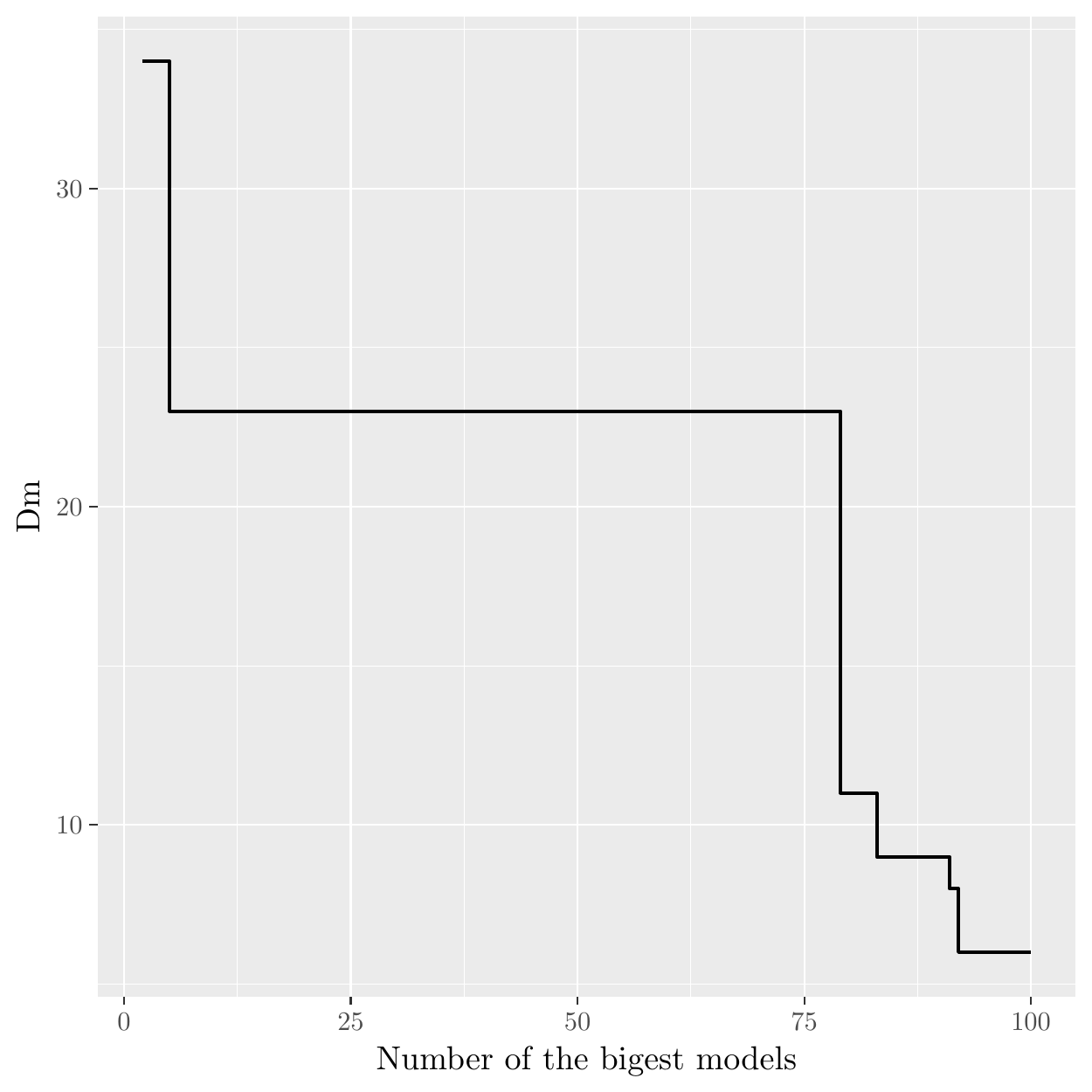}
\caption{Estimation of the over-penalization constant.}
\label{fig:proportion}
\end{figure}

\subsection{Experimental Setup}

We have compared the numerical performance of our procedure with the classic methods of penalisation of the literature on several densities. In particular, we consider the estimator of \cite{BirRozen:06} and AICc (\citep{Sugiura:78, HurTsai:89}). We also report on AIC's behaviour. In the following, we name the procedure of \cite{BirRozen:06} by BR, and our criterion AIC$_1$ when the constant $C=1$ in \eqref{pen_a} and AIC$_a$ for a fully adaptive procedure which will be detailed below. More specifically, the performance of the following four model selection methods were compared:
\begin{enumerate}
\item AIC:
\begin{equation*}
\widehat{m}_{\mathrm{AIC}}\in\arg\min_{m\in \mathcal{M}_{n}}\left\{\mathrm{crit_{\mathrm{AIC}}}(m)\right\},
\end{equation*}
with
\[
\mathrm{crit_{\mathrm{AIC}}}(m) = P_{n}(\gamma(\widehat{f}_{m}))+\frac{D_m}{n}.
\]
\item AICc:
\begin{equation*}
\widehat{m}_{\mathrm{AICc}}\in\arg\min_{m\in \mathcal{M}_{n}}\left\{P_{n}(\gamma(\widehat{f}_{m}))+\pen_{\mathrm{AICc}}(m)\right\},
\end{equation*}
with
\[
\pen_{\mathrm{AICc}}(m) = \frac{D_m}{n-D_m-1}.
\]
\item BR:
\begin{equation*}
\widehat{m}_{\mathrm{BR}}\in\arg\min_{m\in \mathcal{M}_{n}}\left\{P_{n}(\gamma(\widehat{f}_{m}))+\pen_{\mathrm{BR}}(m)\right\},
\end{equation*}
with
\[
\pen_{\mathrm{BR}}(m) = \frac{ (\log{D_m})^{2.5}}{n}
\]
\item AIC$_1$:
\begin{equation*}
\widehat{m}_{\mathrm{AIC}_1}\in\arg\min_{m\in \mathcal{M}_{n}}\left\{P_{n}(\gamma(\widehat{f}_{m}))+\pen_{\mathrm{AIC}_1}(m)\right\},
\end{equation*}
with
\[
\pen_{\mathrm{AIC}_1}(m) = 1\times\max \left\{ \sqrt{\frac{D_{m}\ln (n+1)}{n%
}};\sqrt{\frac{\ln (n+1)}{D_{m}}};\frac{\ln (n+1)}{D_{m}}\right\}
\frac{D_{m}}{n}\text{ ,}
\]
\item  AIC$_a$:
\begin{equation*}
\widehat{m}_{\mathrm{AIC}_a}\in\arg\min_{m\in \mathcal{M}_{n}}\left\{P_{n}(\gamma(\widehat{f}_{m}))+\pen_{\mathrm{AIC}_a}(m)\right\},
\end{equation*}
with
\[
\pen_{\mathrm{AIC}_a}(m) = \hat{C}\max \left\{ \sqrt{\frac{D_{m}\ln (n+1)}{n%
}};\sqrt{\frac{\ln (n+1)}{D_{m}}};\frac{\ln (n+1)}{D_{m}}\right\}
\frac{D_{m}}{n}\text{ ,}
\]
where $\hat{C}={\rm median}_{\alpha \in \mathcal{P}} \hat{C}_{\alpha}$, with $\hat{C}_{\alpha}={\rm median}_{m\in \mathcal{M}_{\alpha}} |\hat{C}_m|$, where
\begin{equation*}
\hat{C}_m=\frac{\Delta_m}{\max \left\{ \sqrt{\frac{D_m}{n}};\sqrt{\frac{1}{D_{m}}}\right\}\frac{D_m}{2}} \text{ ,}
\end{equation*}
$\Delta_m$ is the least-squares distance between the opposite of the empirical risk $-P_{n}(\gamma(\widehat{f}_{m}))$ and a fitted line of equation $y=xD_m/n+\hat{a}$ (see Figure \ref{fig:proportion}), $\mathcal{P}$ is the set of proportions $\alpha$ corresponding to the longest plateau of equal selected models when using penalty \eqref{pen_a} with constant $C=\hat{C}_{\alpha}$ and $\mathcal{M}_{\alpha}$ is the set of models in the collection associated to the proportion $\alpha$ of the largest dimensions.
 
\end{enumerate}
\medskip

Let us briefly explain the ideas underlying the design of the fully adaptive AIC$_a$ procedure. According to the definition of penalty $\rm{pen}_{\rm{opt},\beta}$ given in \eqref{pen_opt_quantile} the constant $C$ in our over-penalization procedure \eqref{pen_a} should ideally estimate some normalized deviations of the sum of the excess risk and the empirical excess risk on the models of the collection. Based on Theorem \ref{opt_bounds},we can also assume that the deviations of excess risk and excess empirical risk are of the same order. Moreover, considering the largest models in the collection neglects questions of bias and, therefore, the median of the normalized deviations of the empirical risk around its mean for the largest models should be a reasonable estimator of the $C$ constant. Now the problem is to give a tractable definition to the "largest models" in the collection. To do this, we choose a proportion $\alpha$ of the largest dimensions of the models at hand and calculate using these models an estimator $\hat{C}_{\alpha}$ of the constant $C$ in \eqref{pen_a}. We then proceed for each  $\alpha$ in a grid of values between $0$ and $1$ to a model selection step by over-penalization using the constant $C=\hat{C}_{\alpha}$. This gives us a graph of the selected dimensions with respect to the proportions (see Figure~\ref{fig:proportion}). Finally we define our over-penalization constant $\hat{C}$ as the median of the values of the constants $\hat{C}_{\alpha}$,  $\alpha \in \mathcal{P}$ where $\mathcal{P}$ is the largest plateau in the graph of the selected dimensions with respect to proportions $\alpha$. 

The models that we used along the experiments are made of histograms densities defined a on a regular partition of the interval $[0,1]$ (with the exception of the density Isosceles triangle which is supported on $[-1,1]$).  We show the performance of the proposed method for a set of four test distributions (see Figure~\ref{fig:orig}) and described in the \textit{benchden}\footnote{Available on the CRAN \url{http://cran.r-project.org}.} R-package \cite{Mildenberger-Weinert2012} which provides an implementation of the distributions introduced in \cite{berlinet1994comparison}.

\begin{figure}[t]
\centering
\subfigure[Isosceles triangle]{\includegraphics[width=0.24\textwidth]{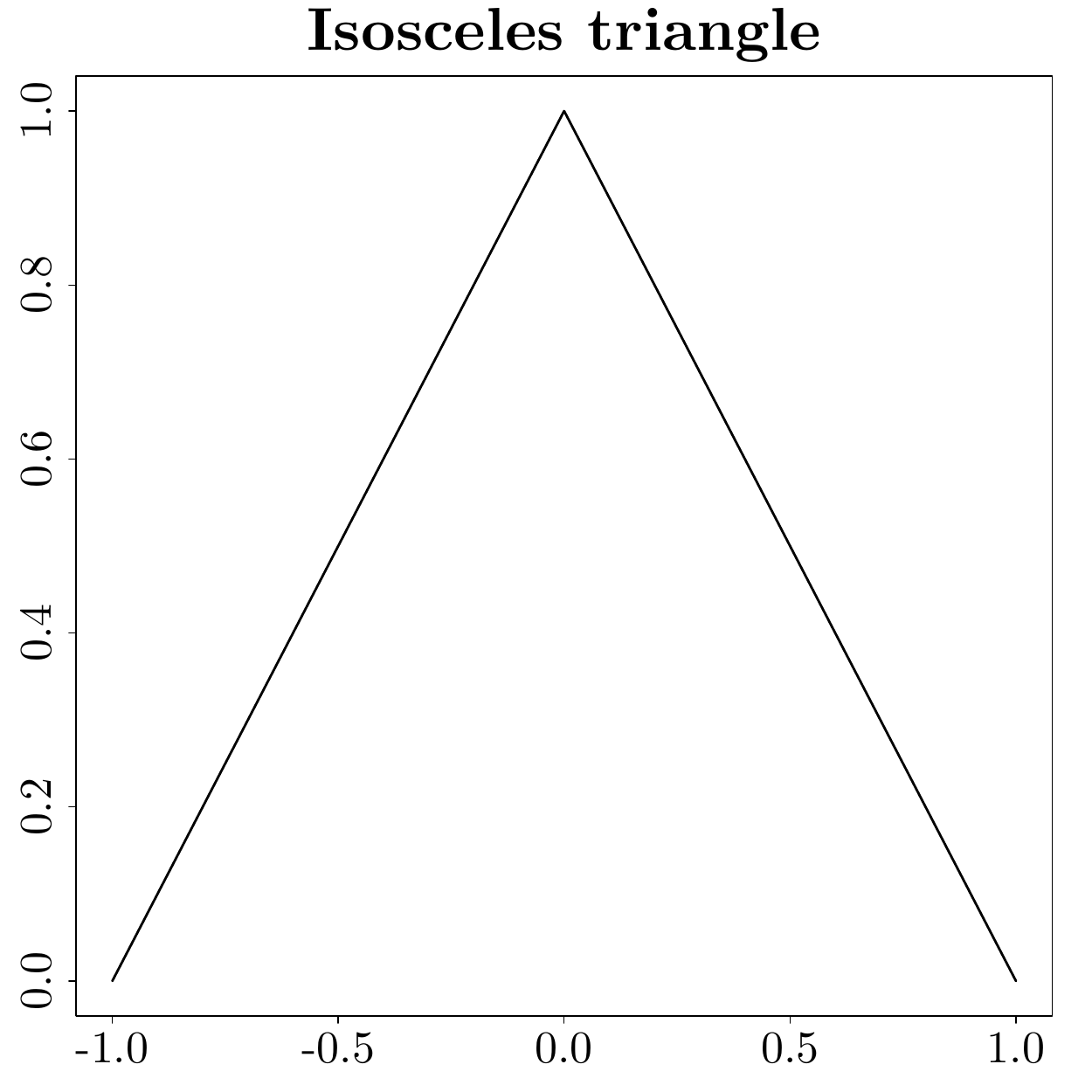}} %
\subfigure[Bilogarithmic peak]{\includegraphics[width=0.24\textwidth]{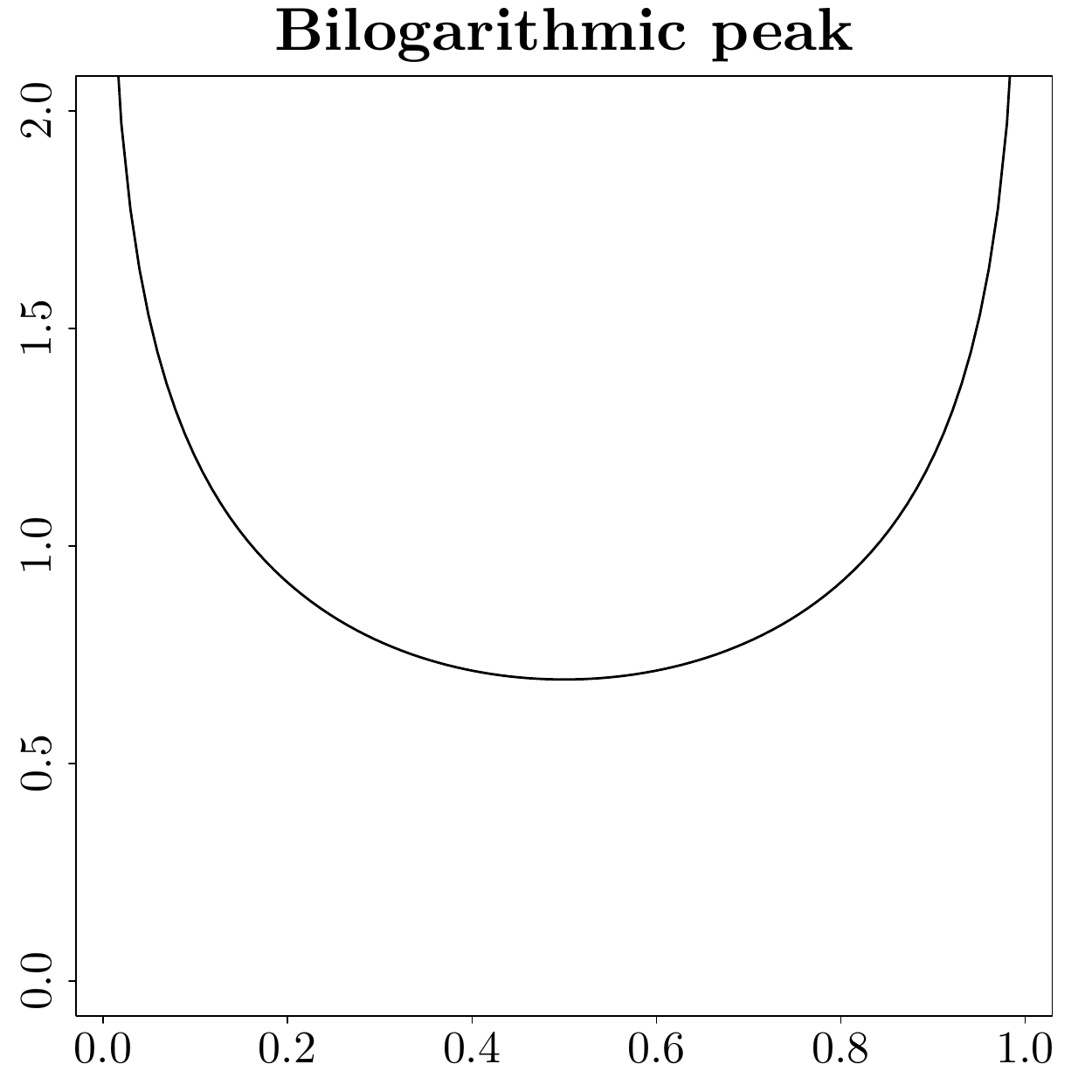}} %
\subfigure[Beta (2,2)]{\includegraphics[width=0.24\textwidth]{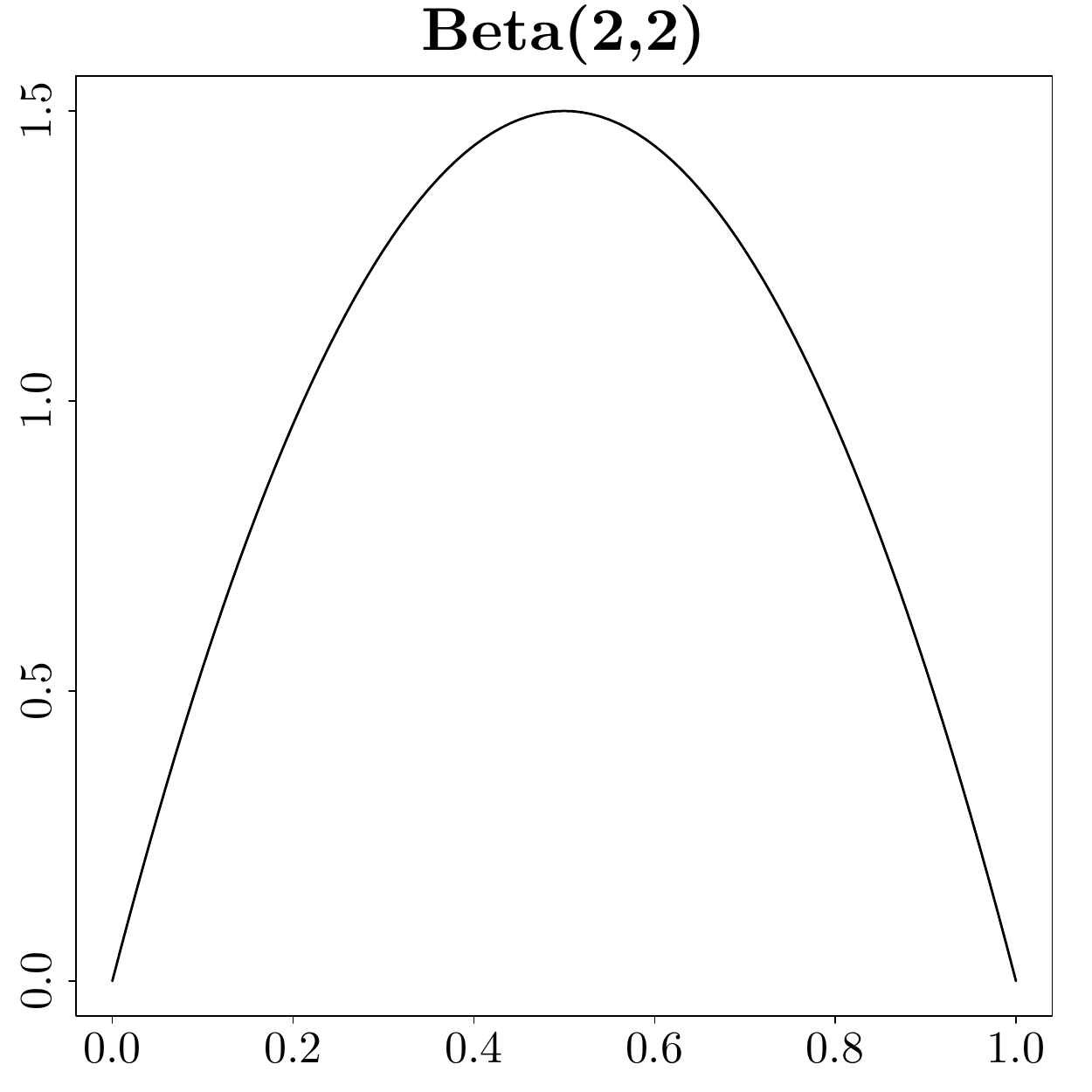}} %
\subfigure[Infinite peak]{\includegraphics[width=0.24\textwidth]{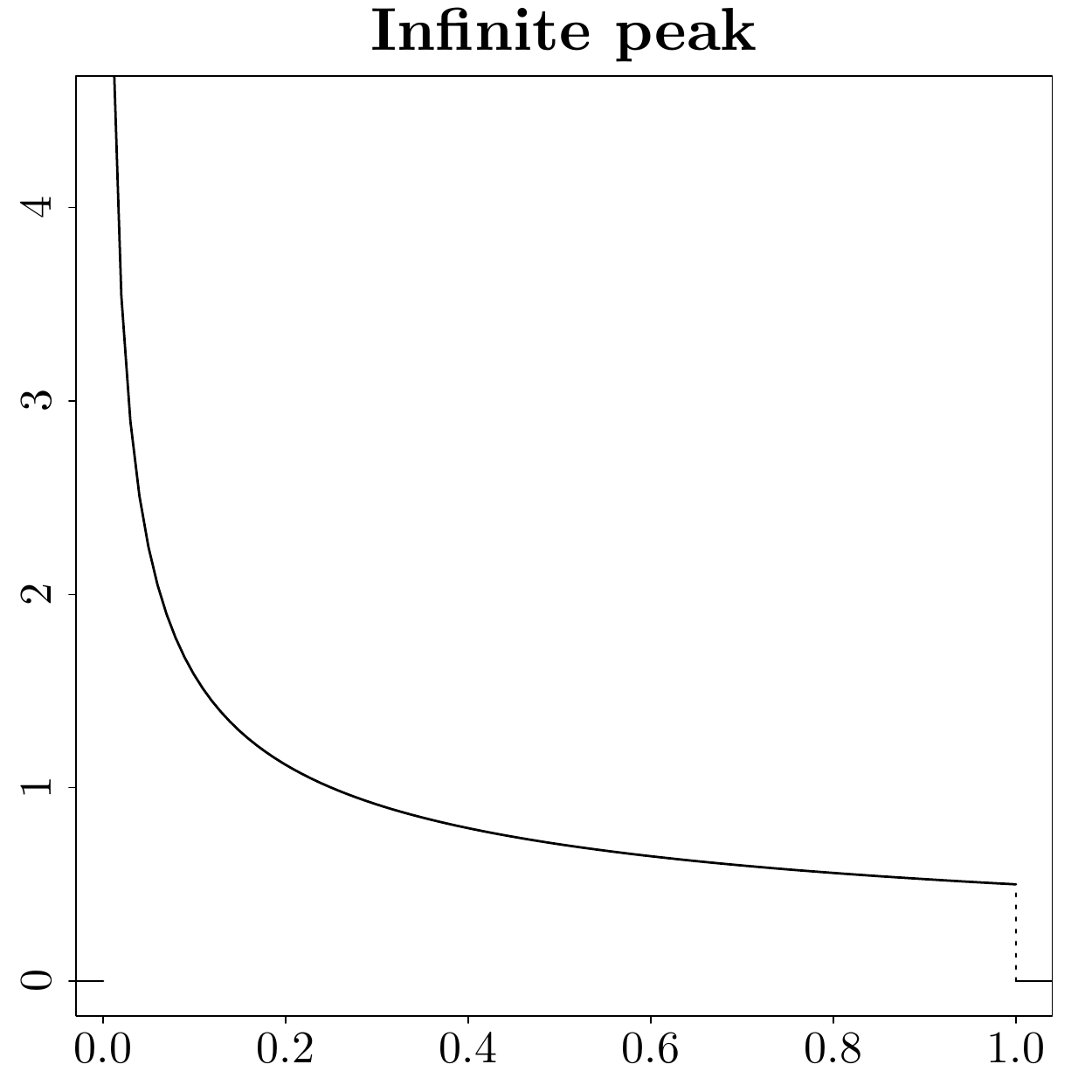}}
\caption{Test densities $f$.}
\label{fig:orig}
\end{figure}

\subsection{Results}
We compared procedures on $N=100$ independent data sets of size $n$ ranging from $50$ to $1000$. We estimate the quality of the model-selection strategies using the median KL divergence. Boxplots were made of the KL risk  over the $N$ trials. The horizontal lines of the boxplot indicate the $5\%, 25\%, 50\%, 75\%$, and $95\%$ quantiles of the error distribution. The median value of  AIC (horizontal black line) is also superimposed for visualization purposes.

It can be seen from Figure~\ref{fig:result} that, as expected, for each method and in all cases, the KL divergence decreases as the sample size increases. We also see clearly that there is generally a substantial advantage to modifying AIC for sample sizes smaller than $1000$. 

Moreover, none of the methods  is more effective than the others in all cases. However,  in almost all cases, AIC$_1$ always had a lower median KL than the others. We therefore recommend using  AIC$_1$ rather than AIC$_c$. Indeed, compared to AIC$_1$, it seems that AIC$_c$ is not penalizing enough,which translates into a worse performance for samples equal to $50$ and $100$. Moreover, both procedures have exactly the same calculation costs.

On the contrary, it seems that the BR criterion penalizes too much. As a result, its performance deteriorates relative to other methods as the sample size increases.

Finally, the behavior of AIC$_a$ is quite good in general, but it is often a little less efficient than AIC$_1$. We interpret this fact by postulating that the extra computations that we engage in AIC$_a$ to remove any hyper-parameter from the procedure actually induces extra variance in the estimation. 

\begin{figure}[t!]
\centering
\subfigure[Isosceles triangle]{
 \includegraphics[width=0.24\textwidth]{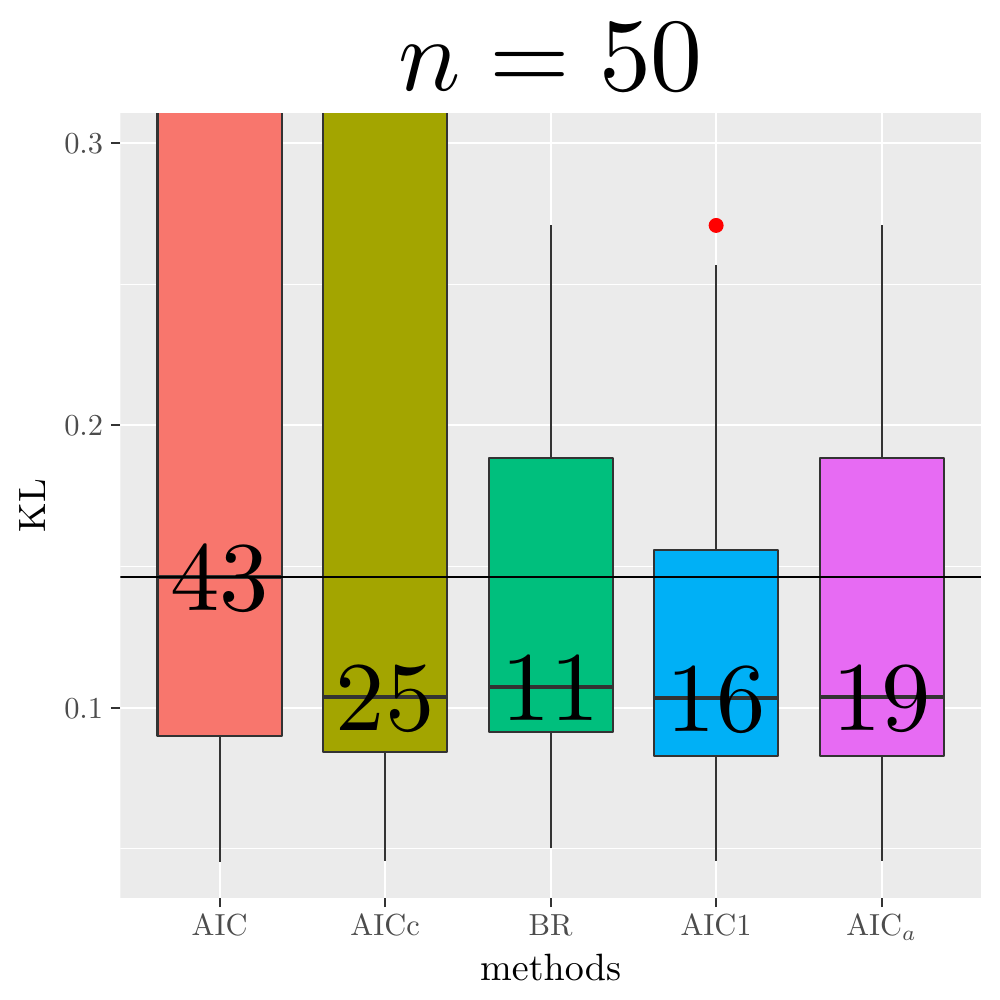}
 \includegraphics[width=0.24\textwidth]{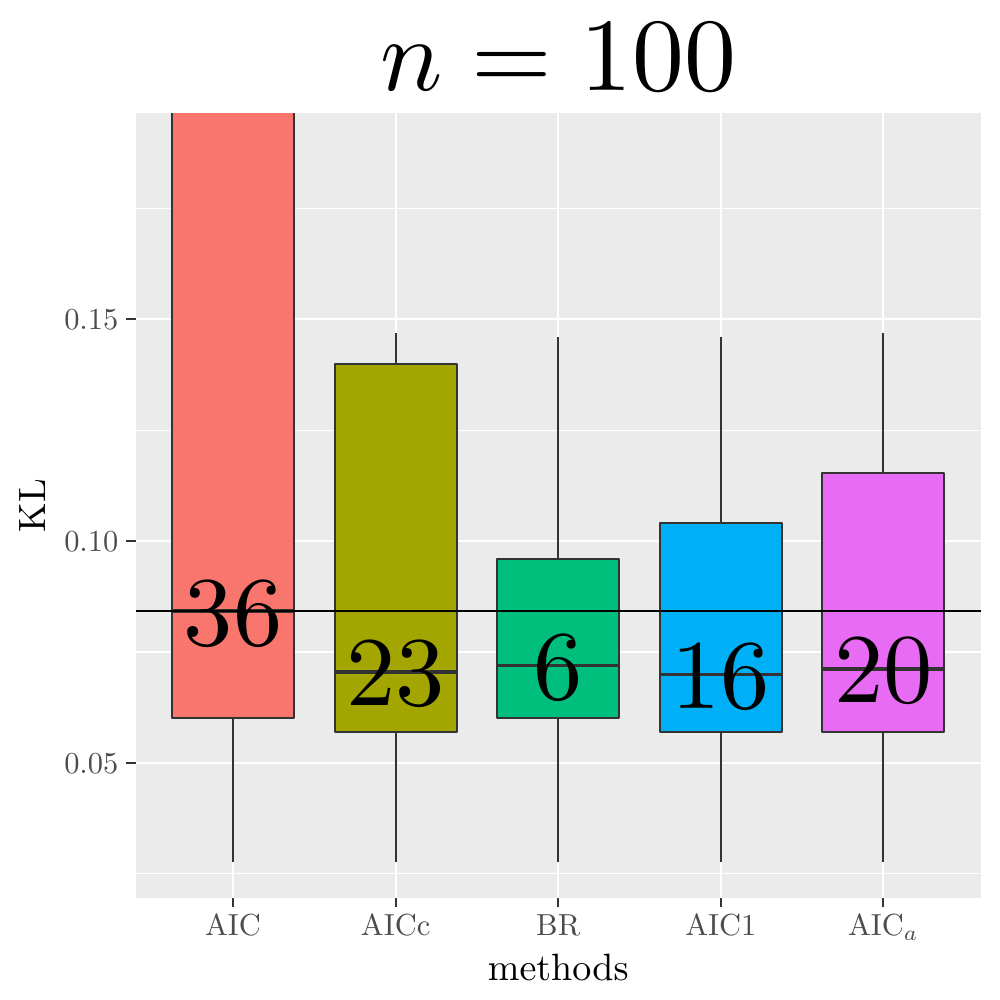}
 \includegraphics[width=0.24\textwidth]{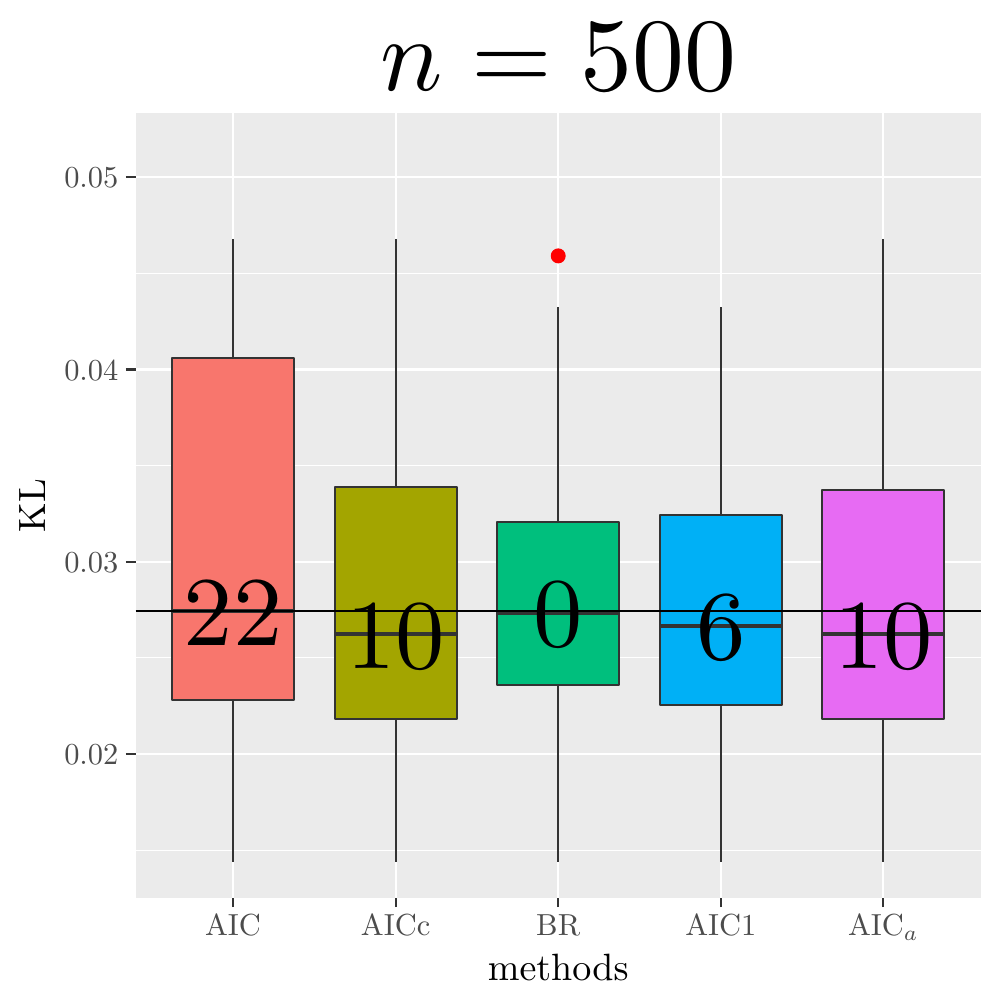}
 \includegraphics[width=0.24\textwidth]{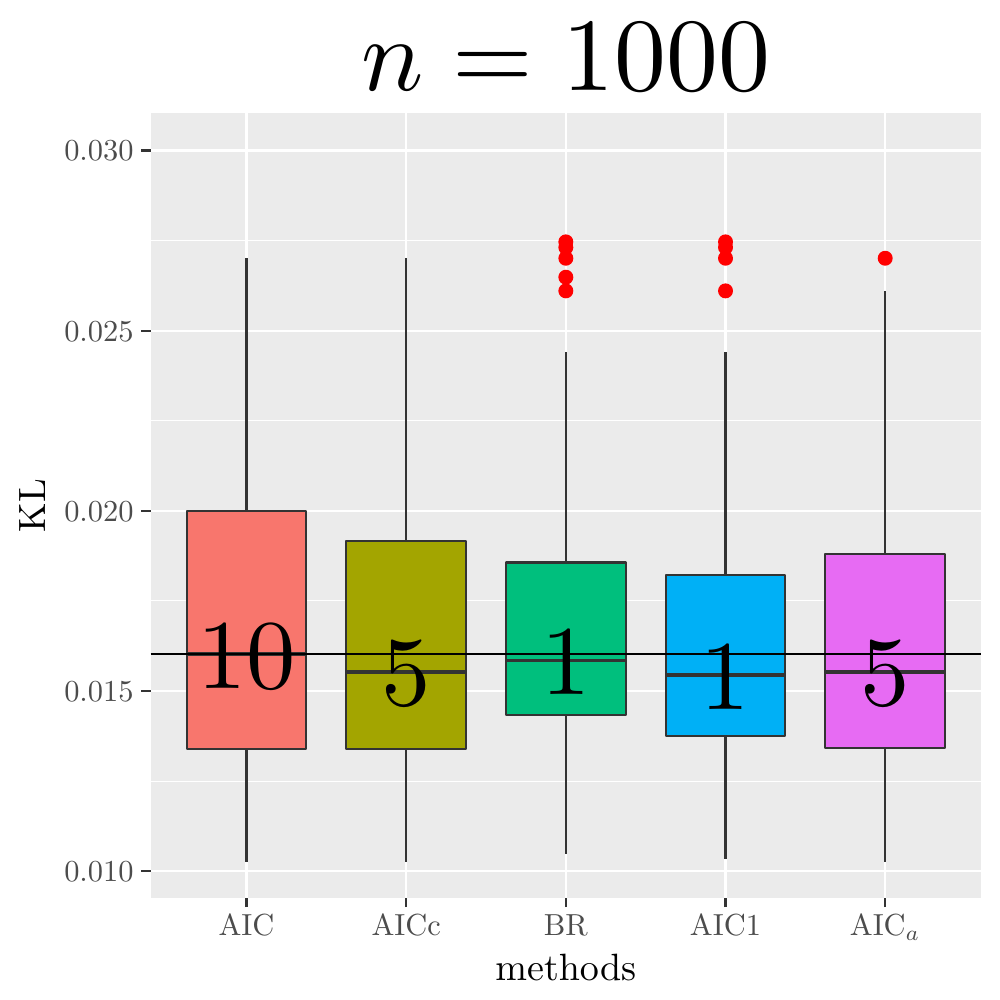}
 }
\subfigure[Bilogarithmic peak]{
 \includegraphics[width=0.24\textwidth]{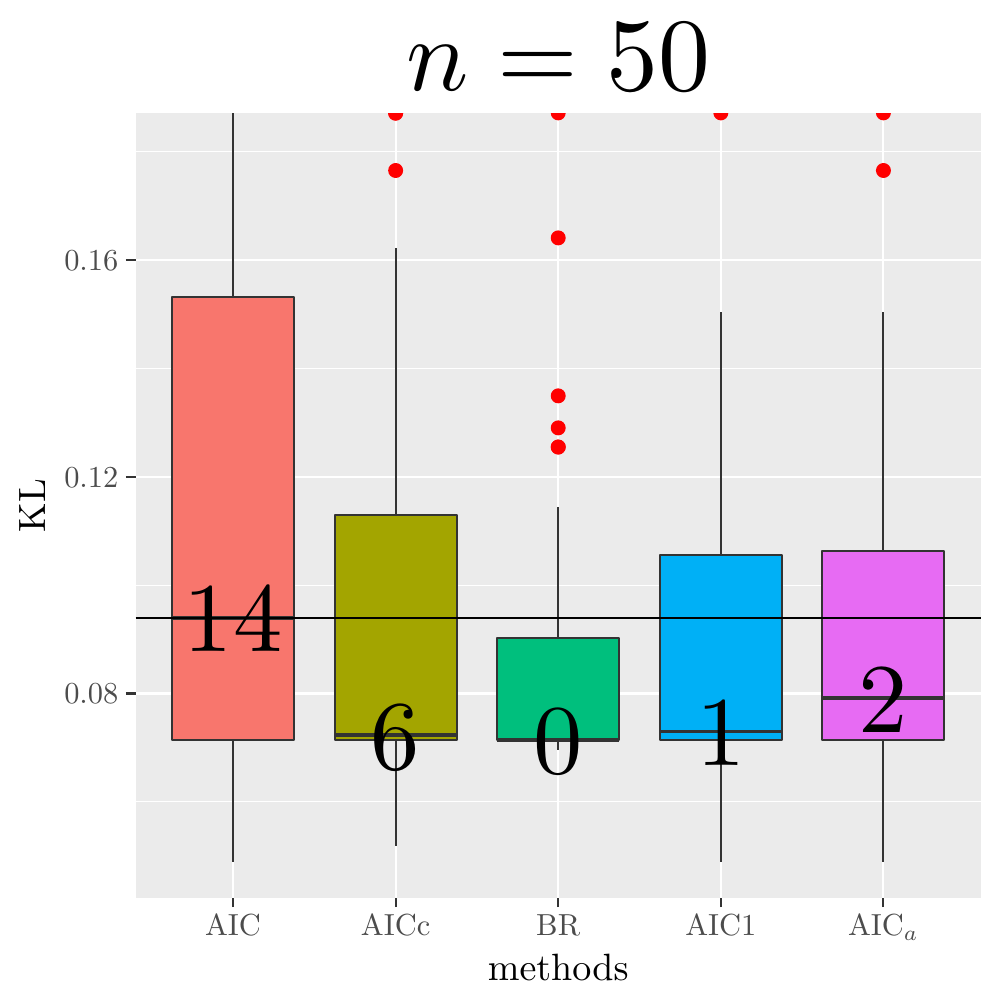}
 \includegraphics[width=0.24\textwidth]{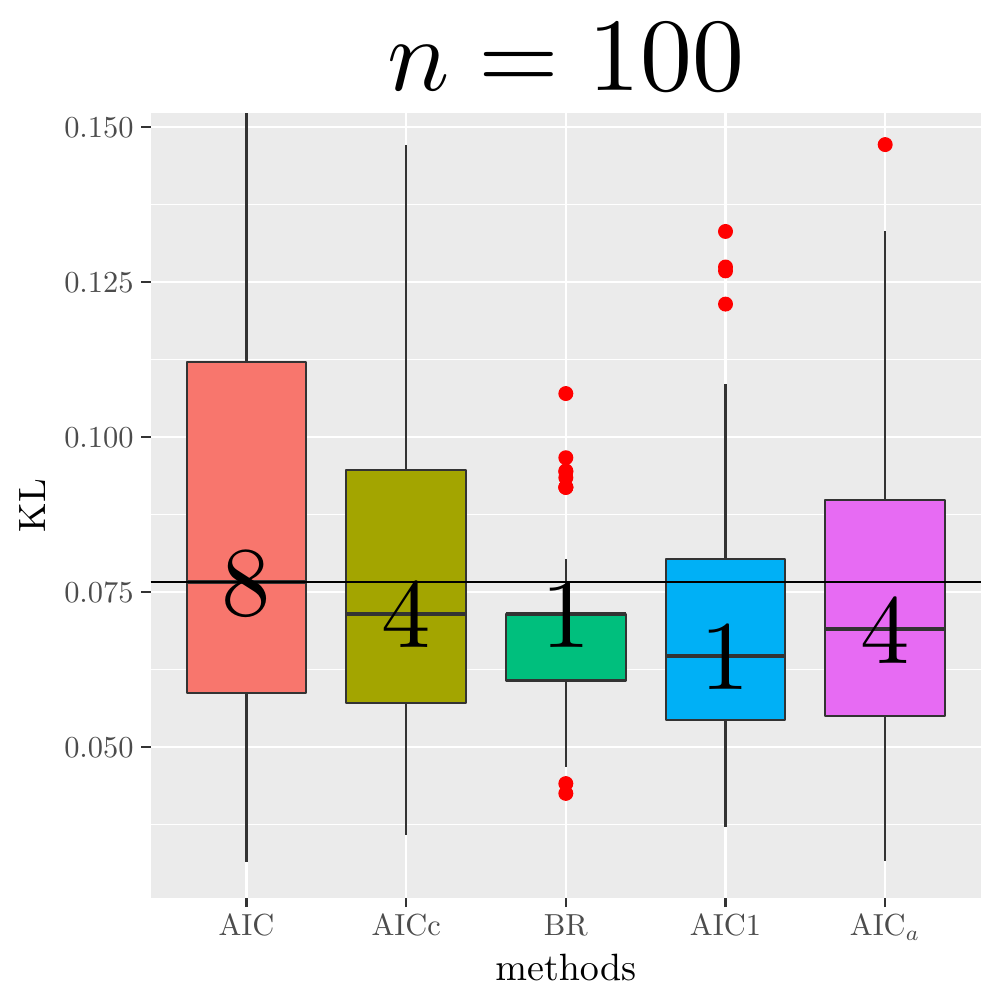}
 \includegraphics[width=0.24\textwidth]{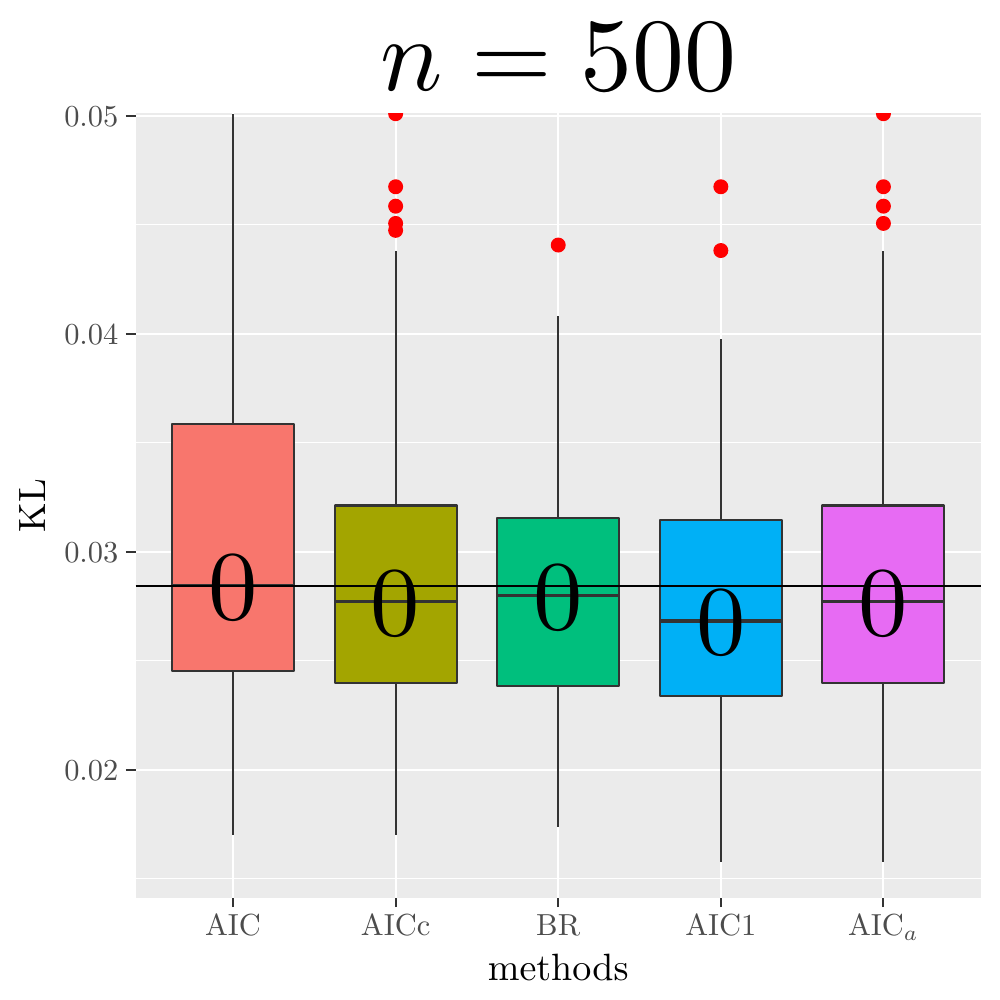}
 \includegraphics[width=0.24\textwidth]{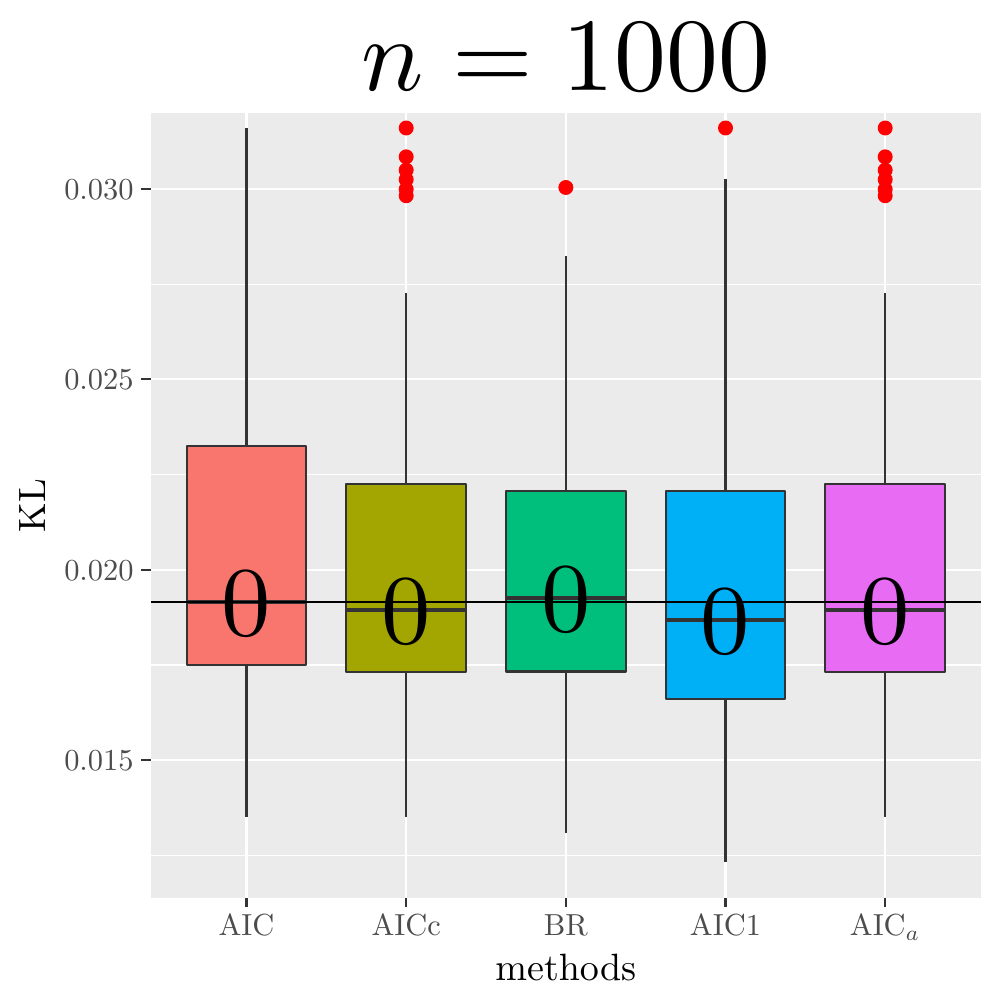}
 }
\subfigure[Beta (2,2)]{
 \includegraphics[width=0.24\textwidth]{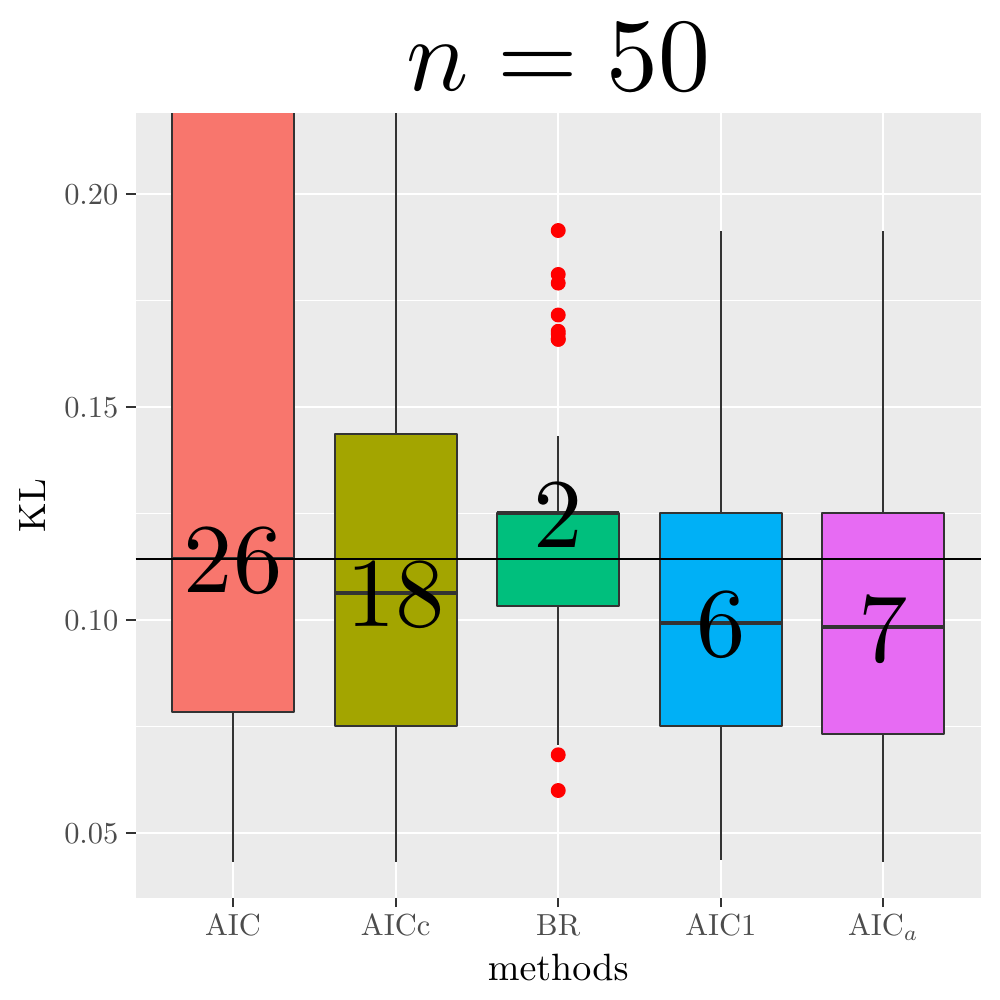}
 \includegraphics[width=0.24\textwidth]{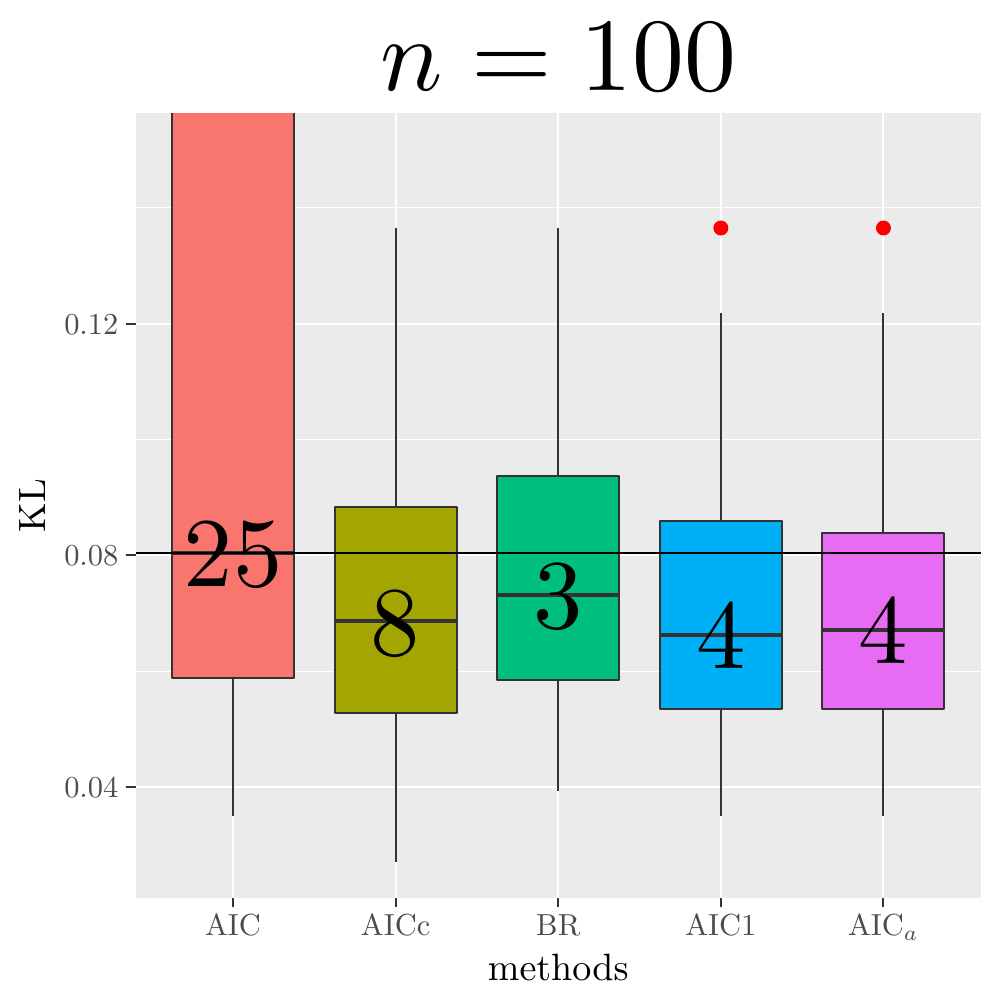}
 \includegraphics[width=0.24\textwidth]{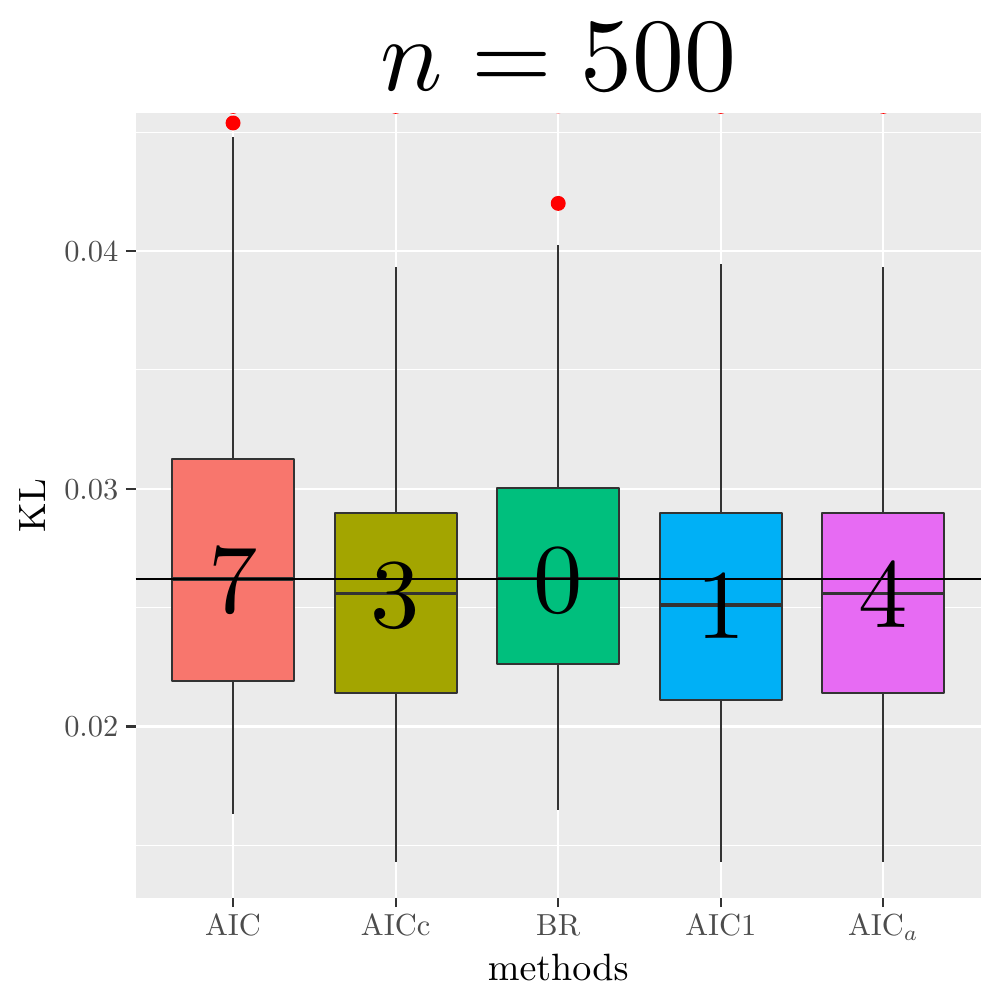}
 \includegraphics[width=0.24\textwidth]{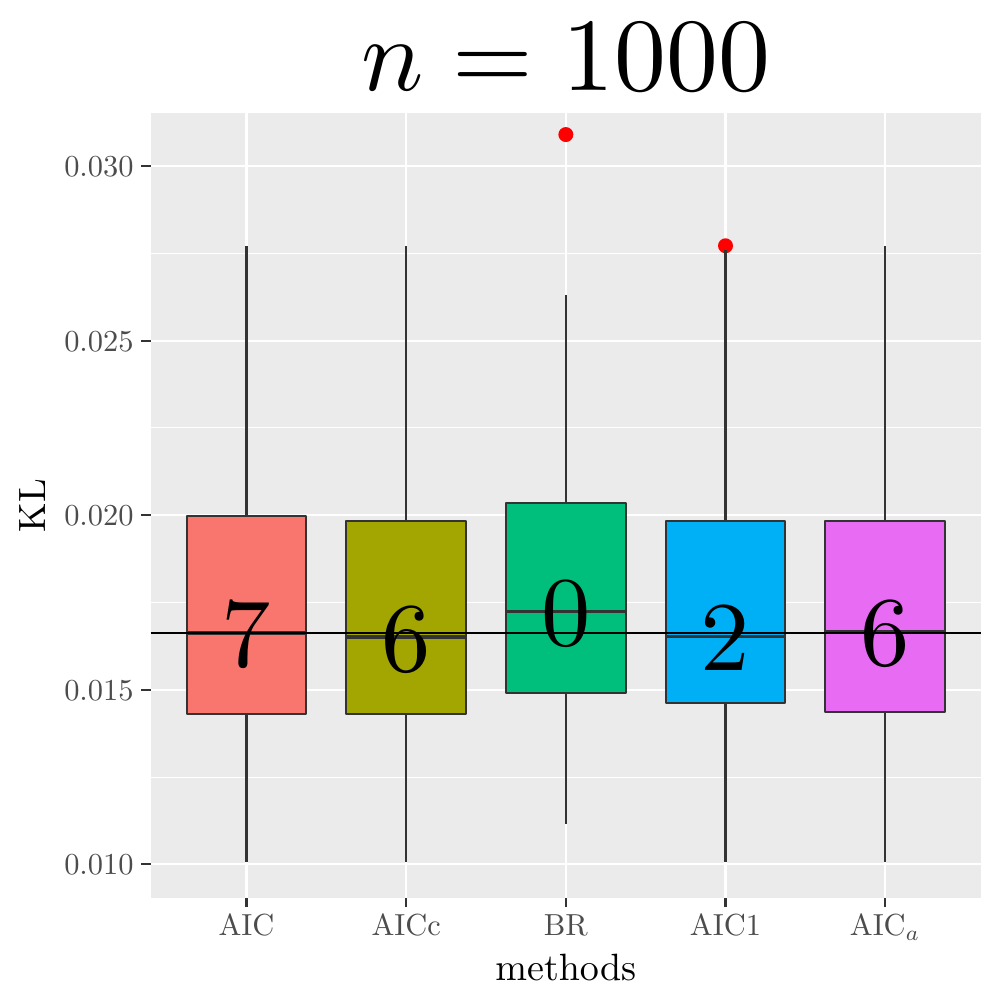}
 }
\subfigure[Infinite peak]{ 
 \includegraphics[width=0.24\textwidth]{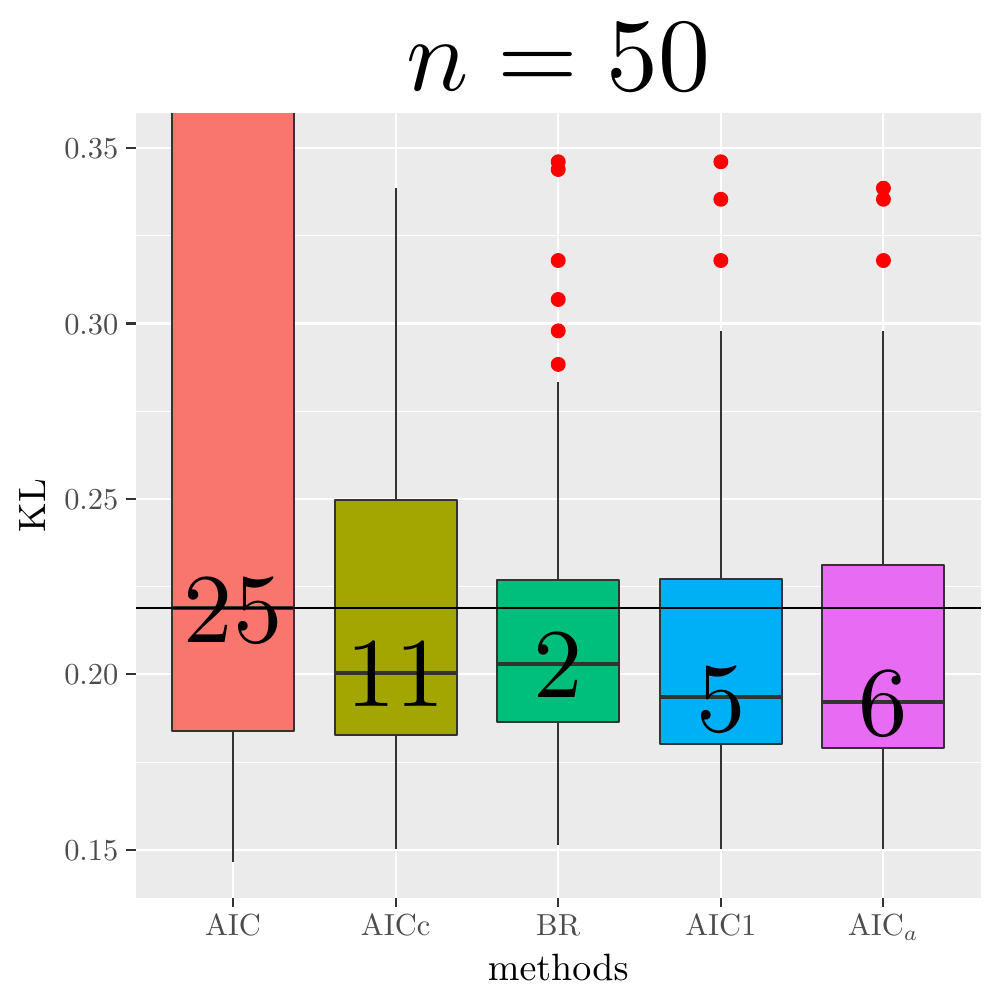}
 \includegraphics[width=0.24\textwidth]{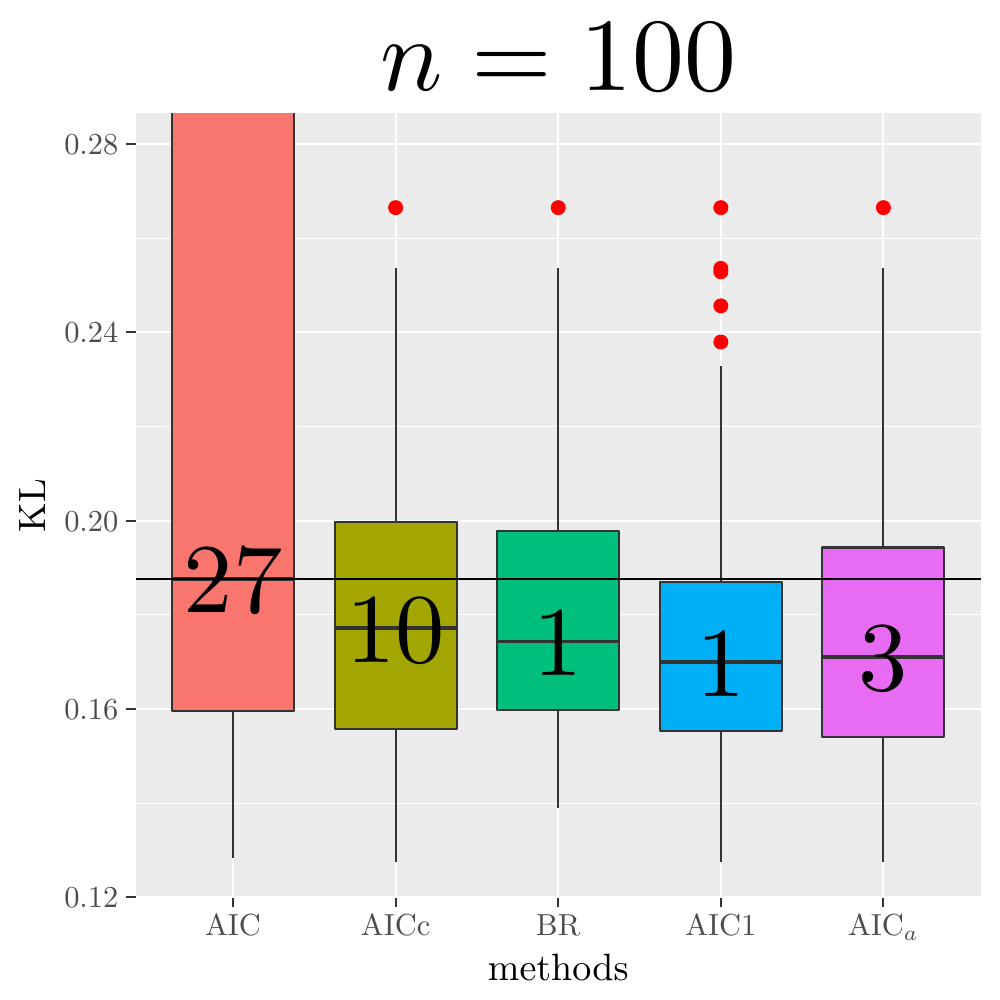}
 \includegraphics[width=0.24\textwidth]{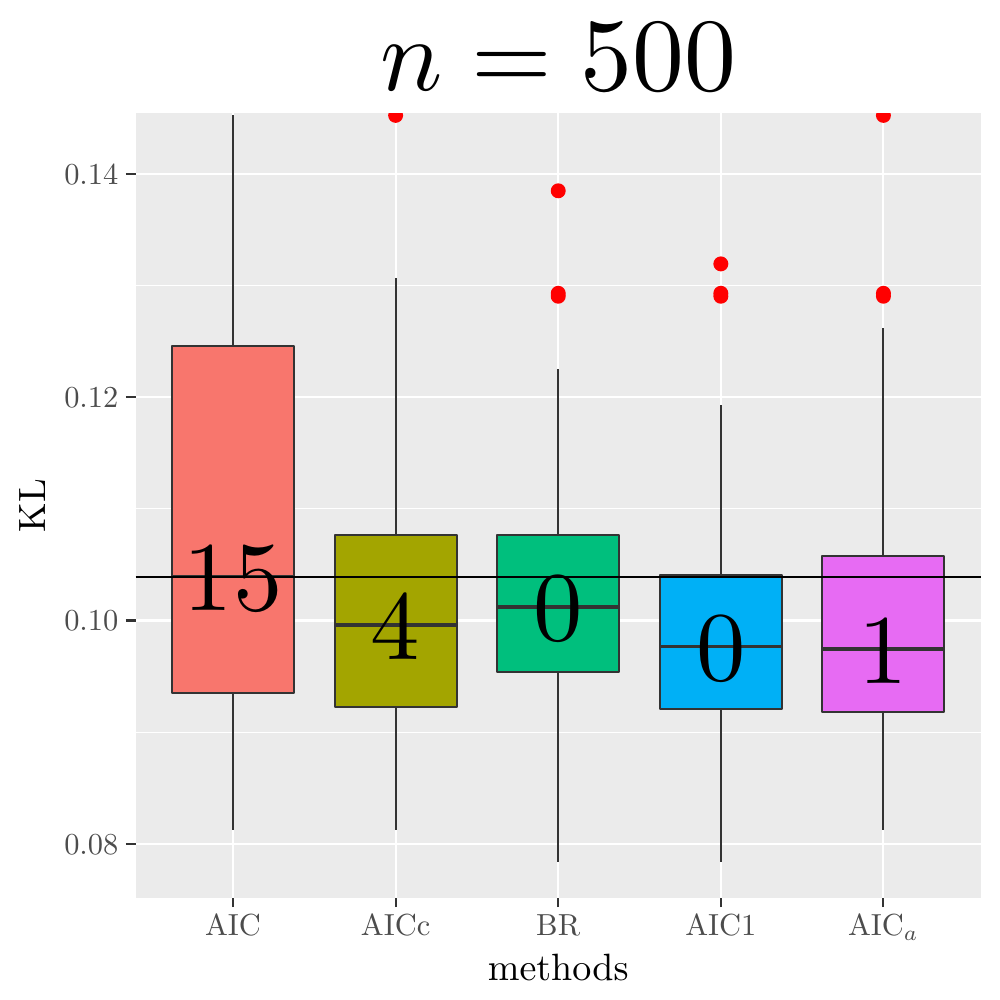}
 \includegraphics[width=0.24\textwidth]{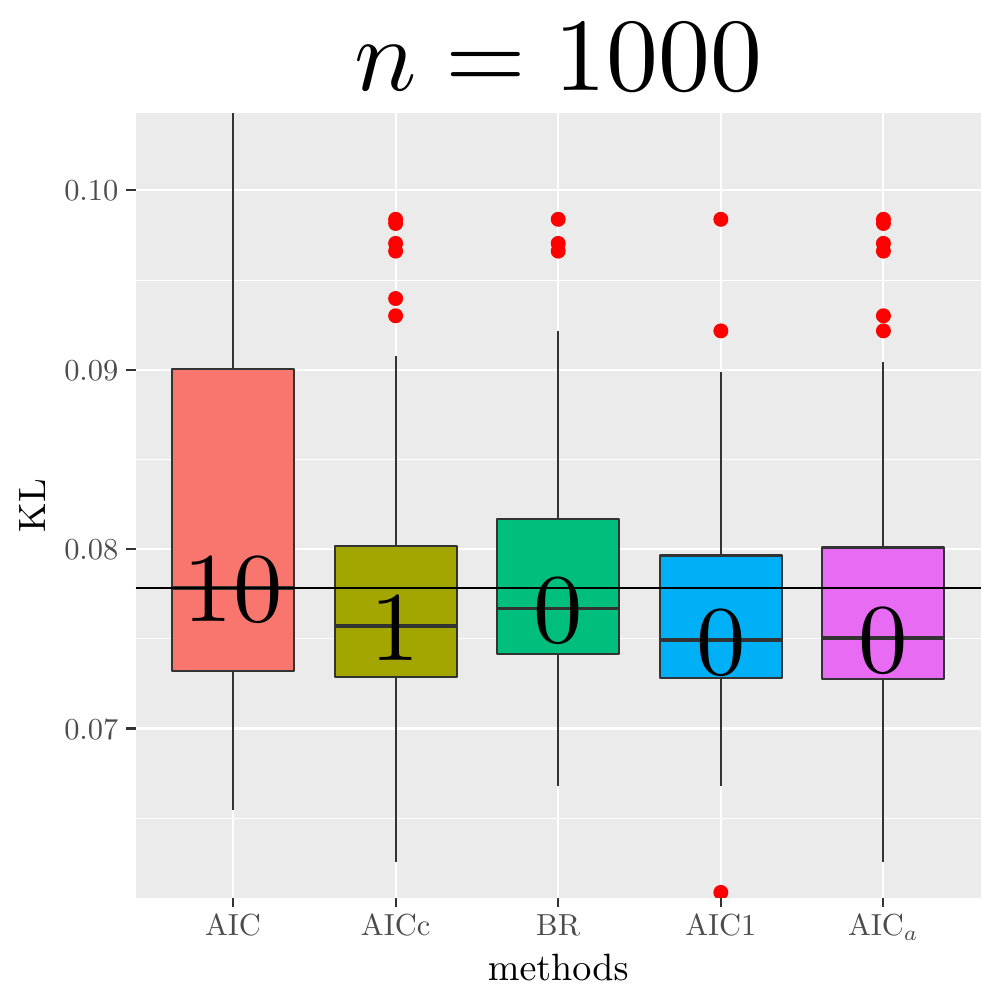}
} 
\includegraphics[width=0.45\textwidth]{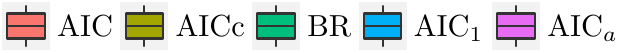}
 
\caption{KL divergence results. Box plots of the KL divergence to the true distribution for the estimated distribution. The solid black line corresponds to the AIC KL divergence median. The term inside the box is the number of time the KL divergence equals $\infty$ out of 100.}
\label{fig:result}
\end{figure}

\subsection*{Acknowledgements}

The first author warmly thanks Matthieu Lerasle for instructive discussions
on the topic of estimation by tests and Alain C{\'e}lisse for a nice discussion at a early stage of this work. He is also grateful to Pascal Massart
for having pushed him towards obtaining better oracle inequalities than in a
previous version of this work. Finally, we owe thanks to Sylvain Arlot and Amandine Dubois for a careful reading that helped to correct some mistakes and improve the presentation of the paper.

\section{Proofs}
\label{section_proofs_MLE}

\subsection{Proofs Related to Section \protect\ref{section_chi_square}\label%
{ssection_chisquare_proof}}

\begin{proof}[Proof of Theorem \protect\ref{prop_upper_dev_chi_2}]
We fix $x,\theta >0$ and we set $z>0$ to be chosen later. Let us set for any 
$I\in m$, $\varphi _{I}=\left( P\left( I\right) \right) ^{-1/2}\mathbbm{1}
_{I}$. The family of functions $\left( \varphi _{I}\right) _{I\in m}$ forms
an orthonormal basis of $\left( m,\left\Vert \cdot \right\Vert _{2}\right) $%
. By Cauchy-Schwarz inequality, we have%
\begin{equation*}
\chi _{n}\left( m\right) =\sup_{\left( a_{I}\right) _{I\in m}\in
B_{2}}\left\vert \left( P_{n}-P\right) \left( \sum_{I\in m}a_{I}\varphi
_{I}\right) \right\vert \text{ ,}
\end{equation*}%
where $B_{2}:=\left\{ \left( a_{I}\right) _{I\in m}\text{ };\text{ }%
\sum_{I\in m}a_{I}^{2}\leq 1\right\} $. Furthermore, the case of equality in
Cauchy-Schwarz inequality gives us%
\begin{equation*}
\chi _{n}\left( m\right) =\left( P_{n}-P\right) \left( \sum_{I\in
m}a_{I}^{\infty }\varphi _{I}\right) \text{ with }a_{I}^{\infty }=\frac{%
\left( P_{n}-P\right) \left( \varphi _{I}\right) }{\chi _{n}\left( m\right) }%
\text{ .}
\end{equation*}%
Hence, by setting $\mathcal{A}\left( s\right) :=B_{2}\bigcap \left\{ \left(
a_{I}\right) _{I\in m}\text{ };\text{ }\sup_{I\in m}\left\vert a_{I}\left(
P\left( I\right) \right) ^{-1/2}\right\vert \leq s\right\} $ for any $s\geq
0 $, we get 
\begin{equation}
\chi _{n}\left( m\right) \boldsymbol{1}_{\Omega _{m}\left( \theta \right) }%
\boldsymbol{1}_{\left\{ \chi _{n}\left( m\right) \geq z\right\} }\leq
\sup_{\left( a_{I}\right) _{I\in m}\in \mathcal{A}\left( \theta /z\right)
}\left\vert \left( P_{n}-P\right) \left( \sum_{I\in m}a_{I}\varphi
_{I}\right) \right\vert \text{ .}  \label{upper_sup}
\end{equation}%
By applying Bousquet's inequality (\cite{Bousquet:02}) to the supremum in
the right-hand side of (\ref{upper_sup}) then gives for any $\delta >0$,%
\begin{align}
&\mathbb{P}\left( \chi _{n}\left( m\right) \boldsymbol{1}_{\Omega _{m}\left(
\theta \right) }\boldsymbol{1}_{\left\{ \chi _{n}\left( m\right) \geq
z\right\} }\geq \left( 1+\delta \right) E_{m}+\sqrt{\frac{2\sigma _{m}^{2}x}{%
n}}+\left( \frac{1}{\delta }+\frac{1}{3}\right) \frac{b_{m}x}{n}\right)
\label{bousquet_z_1} \\
\leq &\mathbb{P}\left( \sup_{\left( a_{I}\right) _{I\in m}\in \mathcal{A}%
\left( \theta /z\right) }\left\vert \left( P_{n}-P\right) \left( \sum_{I\in
m}a_{I}\varphi _{I}\right) \right\vert \geq \left( 1+\delta \right) E_{m}+%
\sqrt{\frac{2\sigma _{m}^{2}x}{n}}+\left( \frac{1}{\delta }+\frac{1}{3}%
\right) \frac{b_{m}x}{n}\right)  \notag \\
\leq &\exp \left( -x\right) \text{ ,}  \notag
\end{align}%
with%
\begin{equation*}
E_{m}\leq \mathbb{E}\left[ \chi _{n}\left( m\right) \right] \leq \mathbb{E}%
^{1/2}\left[ \chi _{n}^{2}\left( m\right) \right] =\sqrt{\frac{D_{m}}{n}}%
\text{ \ \ };\text{ \ \ }\sigma _{m}^{2}\leq \sup_{\left( a_{I}\right)
_{I\in m}\in B_{2}}\mathbb{V}\left( \sum_{I\in m}a_{I}\varphi _{I}\left( \xi
_{1}\right) \right) \leq 1
\end{equation*}%
and%
\begin{equation*}
b_{m}=\sup_{\left( a_{I}\right) _{I\in m}\in \mathcal{A}\left( \theta
/z\right) }\left\Vert \sum_{I\in m}a_{I}\varphi _{I}\right\Vert _{\infty
}\leq \frac{\theta }{z}\text{ .}
\end{equation*}%
Injecting the latter bounds in (\ref{bousquet_z_1}) and taking $z=\sqrt{%
\left( D_{m}\right) /n}+\sqrt{2x/n}$, we obtain that with probability at
least $1-\exp \left( -x\right) $,%
\begin{equation}
\chi _{n}\left( m\right) \boldsymbol{1}_{\Omega _{m}\left( \theta \right)
}<\left( 1+\delta \right) \sqrt{\frac{D_{m}}{n}}+\sqrt{\frac{2x}{n}}+\left( 
\frac{1}{\delta }+\frac{1}{3}\right) \frac{\theta x}{\left( \sqrt{D_{m}}+%
\sqrt{2x}\right) \sqrt{n}}\text{ .}  \label{upper_chi}
\end{equation}%
By taking $\delta =\sqrt{\theta x}\left( D_{m}+\sqrt{2xD_{m}}\right) ^{-1/2}$%
, the right-hand side of (\ref{upper_chi}) becomes%
\begin{align*}
&\sqrt{\frac{D_{m}}{n}}+\sqrt{\frac{2x}{n}}+2\sqrt{\frac{\theta x}{n}}\frac{%
\sqrt{D_{m}}}{\sqrt{D_{m}+\sqrt{2xD_{m}}}}+\frac{\theta x}{3\left( \sqrt{%
D_{m}}+\sqrt{2x}\right) \sqrt{n}} \\
\leq &\sqrt{\frac{D_{m}}{n}}+\sqrt{\frac{2x}{n}}+2\sqrt{\frac{\theta }{n}}%
\left( \sqrt{x}\wedge \left( \frac{xD_{m}}{2}\right) ^{1/4}\right) +\frac{%
\theta }{3}\sqrt{\frac{x}{n}}\left( \sqrt{\frac{x}{D_{m}}}\wedge \frac{1}{%
\sqrt{2}}\right) \text{ ,}
\end{align*}%
which gives (\ref{upper_chi_gene}). Inequality (\ref{upper_chi_part}) is a
direct consequence of (\ref{upper_chi_gene}).
\end{proof}

\begin{proof}[Proof of Theorem \protect\ref{prop_dev_gauche_chi}]
Let us set for any $I\in m$, $\varphi _{I}=\left( P\left( I\right) \right)
^{-1/2}\mathbbm{1}_{I}$. The family of functions $\left( \varphi _{I}\right)
_{I\in m}$ forms an orthonormal basis of $\left( m,\left\Vert \cdot
\right\Vert _{2}\right) $. By Cauchy-Schwarz inequality, we have%
\begin{equation*}
\chi _{n}\left( m\right) =\sup_{\left( a_{I}\right) _{I\in m}\in
B_{2}}\left\vert \left( P_{n}-P\right) \left( \sum_{I\in m}a_{I}\varphi
_{I}\right) \right\vert \text{ ,}
\end{equation*}%
where $B_{2}=\left\{ \left( a_{I}\right) _{I\in m};\sum_{I\in
m}a_{I}^{2}\leq 1\right\} $. As 
\begin{equation*}
\sigma _{m}^{2}:=\sup_{\left( a_{I}\right) _{I\in m}\in B_{2}}\mathbb{V}%
\left( \sum_{I\in m}a_{I}\varphi _{I}\left( \xi _{1}\right) \right) \leq 1
\end{equation*}%
and 
\begin{equation*}
b_{m}=\sup_{\left( a_{I}\right) _{I\in m}\in B_{2}}\left\Vert \sum_{I\in
m}a_{I}\varphi _{I}\right\Vert _{\infty }\leq \sup_{I\in m}\left\Vert
\varphi _{I}\right\Vert _{\infty }=\sup_{I\in m}\frac{1}{\sqrt{P\left(
I\right) }}\text{ ,}
\end{equation*}%
we get by Klein-Rio's inequality (see \cite{Klein_Rio:05}), for every $%
x,\delta >0$, 
\begin{equation}
\mathbb{P}\left( \chi _{n}\left( m\right) \leq \left( 1-\delta \right) 
\mathbb{E}\left[ \chi _{n}\left( m\right) \right] -\sqrt{\frac{2x}{n}}%
-\left( \frac{1}{\delta }+1\right) \frac{b_{m}x}{n}\right) \leq \exp \left(
-x\right) \text{ }.  \label{dev_left_chi}
\end{equation}%
Note that we have $b_{m}\leq \sqrt{D_{m}A_{\Lambda }^{-1}}$. Now, we bound $%
\mathbb{E}\left[ \chi _{n}\left( m\right) \right] $ by below. By Theorem
11.10 in \cite{BouLugMas:13} applied to $\chi _{n}\left( m\right) $, we get,
for any $\zeta >0$,%
\begin{align}
\mathbb{E}\left[ \chi _{n}^{2}\left( m\right) \right] -\mathbb{E}^{2}\left[
\chi _{n}\left( m\right) \right] \leq & \frac{\sigma _{m}^{2}}{n}+4\frac{%
b_{m}}{n}\mathbb{E}\left[ \chi _{n}\left( m\right) \right]  \notag \\
\leq & \frac{1}{n}+4\frac{\sqrt{D_{m}A_{\Lambda }^{-1}}}{n}\mathbb{E}\left[
\chi _{n}\left( m\right) \right]  \notag \\
\leq & \frac{1}{n}+4\zeta ^{-1}\frac{D_{m}A_{\Lambda }^{-1}}{n^{2}}+\zeta 
\mathbb{E}^{2}\left[ \chi _{n}\left( m\right) \right] \text{ .}
\label{variance_bound}
\end{align}%
The latter inequality results from $2ab\leq \zeta a^{2}+\zeta ^{-1}b^{2}$
applied with $a=\mathbb{E}\left[ \chi _{n}\left( m\right) \right] $ and $b=2%
\sqrt{D_{m}A_{\Lambda }^{-1}}/n$. As $\mathbb{E}\left[ \chi _{n}^{2}\left(
m\right) \right] =D_{m}/n$, (\ref{variance_bound}) applied with $\zeta
=n^{-1/2}$ gives%
\begin{align}
\mathbb{E}\left[ \chi _{n}\left( m\right) \right] \geq & \sqrt{\frac{1}{%
1+n^{-1/2}}\left( \frac{D_{m}}{n}-4A_{\Lambda }^{-1}\frac{D_{m}}{n^{3/2}}-%
\frac{1}{n}\right) _{+}}  \notag \\
\geq & \sqrt{\frac{D_{m}}{n}}\left( 1-L_{A_{\Lambda }}D_{m}^{-1/2}\vee
n^{-1/4}\right) \text{ .}  \label{lower_mean}
\end{align}%
Hence, by using (\ref{lower_mean}) and taking $x=\alpha \ln (n+1)$ and $%
\delta =n^{-1/4}\sqrt{\ln (n+1)}$ in (\ref{dev_left_chi}), we obtain with
probability at least $1-(n+1)^{-\alpha }$,%
\begin{align*}
& \chi _{n}\left( m\right) \\
\geq & \left( 1-\frac{\sqrt{\ln (n+1)}}{n^{1/4}}\right) \sqrt{\frac{D_{m}}{n}%
}\left( 1-L_{A_{\Lambda }}\frac{1}{\sqrt{D_{m}}}\vee \frac{1}{n^{1/4}}\right)
\\
{}& -\sqrt{\frac{2\alpha \ln (n+1)}{n}}-\left( \frac{n^{1/4}}{\sqrt{\ln (n+1)%
}}+1\right) \frac{\alpha \sqrt{D_{m}}\ln (n+1)}{n} \\
\geq & \sqrt{\frac{D_{m}}{n}}\left( 1-L_{A_{\Lambda }}\frac{\sqrt{\ln (n+1)}%
}{n^{1/4}}\vee \frac{1}{\sqrt{D_{m}}}-\sqrt{\frac{2\alpha \ln (n+1)}{D_{m}}}%
-L_{\alpha }\frac{\sqrt{\ln (n+1)}}{n^{1/4}}\right) \\
\geq & \sqrt{\frac{D_{m}}{n}}\left( 1-L_{A_{\Lambda },\alpha }\frac{\sqrt{%
\ln (n+1)}}{n^{1/4}}\vee \sqrt{\frac{\ln (n+1)}{D_{m}}}\right) \text{ ,}
\end{align*}%
which concludes the proof.
\end{proof}

\subsection{Proofs related to Section \protect\ref%
{section_concentration_inequalities}}


In this section, our aim is to control for any histogram model $m$, the
deviations of the empirical bias from its mean, which is the true bias.
Concentration inequalities for the centered empirical bias are provided in
Section~\ref{section_concentration_inequalities}. In order to prove oracle
inequalities for the KL divergence, we relate the magnitude of these
deviations to the true bias. This is done in Section~\ref%
{section_margin_like_relations}. Since we believe that the results in this section are of interest in themselves, we have stated them, when possible, for a general density in $\mathcal{S}$ rather for the projections $f_{m}$.

The results in this section are based on the Cram\`{e}r-Chernoff method (see 
\cite{BoucheronLugosiMassart:2013} for instance). Let us recall that if we
set $S:=\sum_{i=1}^{n}X_{i}-\mathbb{E}\left[ X_{i}\right] $, where $\left(
X_{i}\right) _{i=1}^{n}$ are $n$ i.i.d. real random variables, and for any $%
\lambda \geq 0$ and $y\in \mathbb{R}_{+}$, 
\begin{equation*}
\varphi _{S}\left( \lambda \right) :=\ln \left( \mathbb{E}\left[ \exp \left(
\lambda S\right) \right] \right) =n\left( \ln \left( \mathbb{E}\left[ \exp
\left( \lambda X_{1}\right) \right] \right) -\lambda \mathbb{E}\left[ X_{1}%
\right] \right)
\end{equation*}%
and 
\begin{equation*}
\varphi _{S}^{\ast }\left( y\right) :=\sup_{\lambda \in \mathbb{R}%
_{+}}\left\{ \mathbb{\lambda }y\mathbb{-}\varphi _{S}\left( \lambda \right)
\right\} \text{ ,}
\end{equation*}%
then 
\begin{equation}
\mathbb{P}\left( S\geq y\right) \leq \exp \left( -\varphi _{S}^{\ast }\left(
y\right) \right) \text{ .}  \label{markov}
\end{equation}

\begin{proof}[Proof of Proposition \protect\ref{delta_bar_dev_droite_MLE}]
We first prove concentration inequality (\ref{dev_droite_gene_2}). We set $%
X_{i}:=\ln \left( \left. f\right/ f_{\ast }\right) \left( \xi _{i}\right) $
and use Inequality (\ref{markov}). For $\lambda \in \left[ 0,1\right] $, as $%
\mathbb{E}\left[ X_{1}\right] =-\mathcal{K}\left( f_{\ast },f\right) $, we
have%
\begin{align*}
\left. \varphi _{S}\left( \lambda \right) \right/ n=& \ln \left( P\left[
\left( \left. f\right/ f_{\ast }\right) ^{\lambda }\right] \right) +\lambda 
\mathcal{K}\left( f_{\ast },f\right)   \notag \\
\leq & \lambda \ln \left( P\left[ \left. f\right/ f_{\ast }\right] \right)
+\lambda \mathcal{K}\left( f_{\ast },f\right) =\lambda \mathcal{K}\left(
f_{\ast },f\right) \text{ ,}  
\end{align*}%
where the inequality derives from the concavity of the function $x\mapsto
x^{\lambda }$. By setting $\mathcal{K=K}\left( f_{\ast },f\right) $, we thus
get%
\begin{equation*}
\varphi _{S}^{\ast }\left( y\right) \geq \sup_{\lambda \in \left[ 0,1\right]
}\left\{ \mathbb{\lambda }y\mathbb{-}\varphi _{S}\left( \lambda \right)
\right\} \geq \left( y-n\mathcal{K}\right) _{+}\text{ ,}
\end{equation*}%
which implies,%
\begin{equation}
\mathbb{P}\left( (P_{n}-P)\left( \ln \left( f/f_{\ast }\right) \right) \geq
x\right) \leq \exp \left( -n(x-\mathcal{K})_{+}\right) \text{ }.
\label{ineq_right_zero}
\end{equation}%
Inequality (\ref{dev_droite_gene_2}) is a direct consequence of (\ref%
{ineq_right_zero}). Moreover, we notice that for any $u\in \mathbb{R}$, $%
\exp \left( u\right) \leq 1+u+\frac{u^{2}}{2}\exp \left( u_{+}\right) $ and $%
\ln \left( 1+u\right) \leq u$, where $u_{+}=u\vee 0$. By consequence, for $%
\lambda \in \left[ 0,1\right] $, it holds%
\begin{gather*}
\left. \varphi _{S}\left( \lambda \right) \right/ n=\ln \left( \mathbb{E}%
\left[ \exp \left( \lambda X_{1}\right) \right] \right) -\lambda \mathbb{E}%
\left[ X_{1}\right]  \\
\leq \ln \left( 1+\lambda \mathbb{E}\left[ X_{1}\right] +\frac{\lambda ^{2}}{%
2}\mathbb{E}\left[ X_{1}^{2}\exp \left( \lambda \left( X_{1}\right)
_{+}\right) \right] \right) -\lambda \mathbb{E}\left[ X_{1}\right]  \\
\leq \frac{\lambda ^{2}}{2}\mathbb{E}\left[ X_{1}^{2}\exp \left( \lambda
\left( X_{1}\right) _{+}\right) \right] \leq \frac{\lambda ^{2}}{2}\mathbb{E}%
\left[ X_{1}^{2}\exp \left( \left( X_{1}\right) _{+}\right) \right] \leq 
\frac{\lambda ^{2}}{2}v\text{ .}
\end{gather*}%
Now, we get, for any $y\geq 0$,%
\begin{equation}
\varphi _{S}^{\ast }\left( y\right) \geq \sup_{\lambda \in \left[ 0,1\right]
}\left\{ \mathbb{\lambda }y\mathbb{-}\varphi _{S}\left( \lambda \right)
\right\} \geq \sup_{\lambda \in \left[ 0,1\right] }\left\{ \mathbb{\lambda }y%
\mathbb{-}n\frac{\lambda ^{2}v}{2}\right\} =\left( \frac{y^{2}}{2nv}\mathbf{1%
}_{y\leq nv}+\left( y-\frac{nv}{2}\right) \mathbbm{1}_{y>nv}\right) \text{ .}
\label{lower_phi_star_droite}
\end{equation}%
So, by using (\ref{markov}) with (\ref{lower_phi_star_droite}) taken with $%
x=y/n$, it holds%
\begin{equation}
\mathbb{P}\left( \left( P_{n}-P\right) \left( \ln \left( \left. f\right/
f_{\ast }\right) \right) \geq x\right) \leq \exp \left( -n\left( \frac{x^{2}%
}{2v}\mathbbm{1}_{x\leq v}+\left( x-\frac{v}{2}\right) \mathbbm{1}%
_{x>v}\right) \right) \text{ .}  \label{dev_droite_prelim}
\end{equation}%
To obtain (\ref{bernstein_dev_droite_2}), we notice that Inequality (\ref%
{dev_droite_prelim}) implies by simple calculations, for any $z\geq 0$,%
\begin{equation*}
\mathbb{P}\left( \left( P_{n}-P\right) \left( \ln \left( \left. f\right/
f_{\ast }\right) \right) \geq \sqrt{\frac{2vz}{n}}\mathbbm{1}_{z\leq
nv/2}+\left( \frac{z}{n}+\frac{v}{2}\right) \mathbbm{1}_{z>nv/2}\right) \leq
\exp \left( -z\right) \text{ .}  
\end{equation*}%
To conclude the proof, it suffices to remark that $\sqrt{2vz/n}\mathbbm{1}%
_{z\leq nv/2}+\left( z/n+v/2\right) \mathbbm{1}_{z>nv/2}\leq \sqrt{2vz/n}%
+2z/n$.
\end{proof}

\begin{proof}[Proof of Proposition~\ref{delta_bar_dev_gauche_MLE}]
Let us first prove the inequality of concentration (\ref{delta_dev_gauche_2}). We
set $X_{i}:=\ln \left( \left. f_{\ast }\right/ f\right) \left( \xi
_{i}\right) $ and use (\ref{markov}). For $\lambda \in \left[ 0,r\right] $,
we have by H\"{o}lder's inequality, $P\left[ \left( \left. f_{\ast }\right/
f\right) ^{\lambda }\right] \leq P\left[ \left( \left. f_{\ast }\right/
f\right) ^{r}\right] ^{\lambda /r}$. Hence,%
\begin{align*}
\left. \varphi _{S}\left( \lambda \right) \right/ n=& \ln \left( P\left[
\left( \left. f_{\ast }\right/ f\right) ^{\lambda }\right] \right) -\lambda 
\mathcal{K}\left( f_{\ast },f\right)  \\
\leq & \lambda \left( \frac{1}{r}\ln \left( P\left[ \left( \left. f_{\ast
}\right/ f\right) ^{r}\right] \right) -\mathcal{K}\left( f_{\ast },f\right)
\right) \text{ .}  
\end{align*}%
Let us notice that by concavity of $\ln $, we have $\frac{1}{r}\ln \left( P%
\left[ \left( \left. f_{\ast }\right/ f\right) ^{r}\right] \right) -\mathcal{%
K}\left( f_{\ast },f\right) \geq 0$. Now we get, for any $y\geq 0$,%
\begin{align*}
\varphi _{S}^{\ast }\left( y\right) \geq & \sup_{\lambda \in \left[ 0,r%
\right] }\left\{ \mathbb{\lambda }y\mathbb{-}\varphi _{S}\left( \lambda
\right) \right\}  \\
\geq & \sup_{\lambda \in \left[ 0,r\right] }\left\{ \mathbb{\lambda }\left( y%
\mathbb{-}n\left( \frac{1}{r}\ln \left( P\left[ \left( \left. f_{\ast
}\right/ f\right) ^{r}\right] \right) -\mathcal{K}\left( f_{\ast },f\right)
\right) \right) \right\}  \\
=& \left( ry\mathbb{-}n\left( \ln \left( P\left[ \left( \left. f_{\ast
}\right/ f\right) ^{r}\right] \right) -r\mathcal{K}\left( f_{\ast },f\right)
\right) \right) _{+}\text{ .}
\end{align*}%
Using (\ref{markov}), we obtain, for any $x\geq 0$,%
\begin{equation}
\mathbb{P}\left( (P_{n}-P)(\ln \left( f/f_{\ast }\right) )\leq -x\right)
\leq \exp \left( -n(rx-\ln \left[ \left( \left. f_{\ast }\right/ f\right)
^{r}\right] +r\mathcal{K}\left( f_{\ast },f\right) )_{+}\right) \text{ .}
\label{delta_dev_gauche_0}
\end{equation}%
Inequality (\ref{delta_dev_gauche_2}) is a straightforward consequence of (%
\ref{delta_dev_gauche_0}). As in the proof of Theorem \ref%
{delta_bar_dev_droite_MLE}, we notice that for any $u\in \mathbb{R}$, $\exp
\left( u\right) \leq 1+u+\frac{u^{2}}{2}\exp \left( u_{+}\right) $ and $\ln
\left( 1+u\right) \leq u$, where $u_{+}=u\vee 0$. By consequence, for $%
\lambda \in \left[ 0,r\right] $, it holds%
\begin{gather*}
\left. \varphi _{S}\left( \lambda \right) \right/ n=\lambda \mathbb{E}\left[
X_{1}\right] +\ln \left( \mathbb{E}\left[ \exp \left( -\lambda X_{1}\right) %
\right] \right)  \\
\leq \ln \left( 1-\lambda \mathbb{E}\left[ X_{1}\right] +\frac{\lambda ^{2}}{%
2}\mathbb{E}\left[ X_{1}^{2}\exp \left( \lambda \left( -X_{1}\right)
_{+}\right) \right] \right) +\lambda \mathbb{E}\left[ X_{1}\right]  \\
\leq \frac{\lambda ^{2}}{2}\mathbb{E}\left[ X_{1}^{2}\exp \left( \lambda
\left( -X_{1}\right) _{+}\right) \right] \leq \frac{\lambda ^{2}}{2}\mathbb{E%
}\left[ X_{1}^{2}\exp \left( r\left( -X_{1}\right) _{+}\right) \right] \leq 
\frac{\lambda ^{2}}{2}w_{r}\text{ .}
\end{gather*}%
Now we get, for any $y\geq 0$,%
\begin{gather*}
\varphi _{S}^{\ast }\left( y\right) \geq \sup_{\lambda \in \left[ 0,r\right]
}\left\{ \mathbb{\lambda }y\mathbb{-}\varphi _{S}\left( \lambda \right)
\right\}  \\
\geq \sup_{\lambda \in \left[ 0,r\right] }\left\{ \mathbb{\lambda }y\mathbb{-%
}\frac{n\lambda ^{2}w_{r}}{2}\right\} =\frac{y^{2}}{2nw_{r}}\mathbbm{1}%
_{y\leq rnw_{r}}+r\left( y-\frac{rnw_{r}}{2}\right) \mathbbm{1}_{y>rnw_{r}}%
\text{ ,}
\end{gather*}%
which gives 
\begin{equation}
\mathbb{P}\left( (P_{n}-P)(\ln \left( f/f_{\ast }\right) )\leq -x\right)
\leq \exp \left( -n\left( \frac{x^{2}}{2nw_{r}}\mathbbm{1}_{x\leq
rnw_{r}}+r\left( x-\frac{rnw_{r}}{2}\right) \mathbbm{1}_{x>rnw_{r}}\right)
\right)   \label{delat_dev_gauche_gauss_0}
\end{equation}%
Inequality (\ref{delta_dev_gauche_gauss_2}) is again a consequence of (\ref%
{delat_dev_gauche_gauss_0}), by the same kind of arguments as those involved
in the proof of \ref{bernstein_dev_droite_2}\ in Lemma \ref%
{delta_bar_dev_droite_MLE}.
\end{proof}

\subsection{Proofs related to Section \protect\ref%
{section_margin_like_relations}}

\begin{proof}[Proof of Proposition \protect\ref{lem:margin_like_unbounded}]
Let us take $q>1$ such that $1/p+1/q=1$. It holds%
\begin{align*}
P\left[ \left( \frac{f}{f_{\ast }}\vee 1\right) \left( \ln \left( \frac{f}{%
f_{\ast }}\right) \right) ^{2}\right] =& \int \left( f\vee f_{\ast }\right)
\left( \ln \left( \frac{f}{f_{\ast }}\right) \right) ^{2}d\mu \\
=& \int \frac{f_{\ast }\vee f}{f_{\ast }\wedge f}\left( \left( f_{\ast
}\wedge f\right) \left( \ln \left( \frac{f}{f_{\ast }}\right) \right)
^{2}\right) ^{\frac{1}{p}+\frac{1}{q}}d\mu \\
=& \int \left( \frac{f_{\ast }\vee f}{\left( f_{\ast }\wedge f\right) ^{%
\frac{1}{q}}}\left\vert \ln \left( \frac{f}{f_{\ast }}\right) \right\vert ^{%
\frac{2}{p}}\right) \left( \left( f_{\ast }\wedge f\right) \left( \ln \left( 
\frac{f}{f_{\ast }}\right) \right) ^{2}\right) ^{\frac{1}{q}}d\mu \\
\leq & \left( \int \left( f_{\ast }\wedge f\right) \left( \ln \left( \frac{f%
}{f_{\ast }}\right) \right) ^{2}d\mu \right) ^{\frac{1}{q}}\left( \underset{%
:=I}{\underbrace{\int \frac{\left( f_{\ast }\vee f\right) ^{p}}{\left(
f_{\ast }\wedge f\right) ^{p-1}}\left( \ln \left( \frac{f}{f_{\ast }}\right)
\right) ^{2}d\mu }}\right) ^{\frac{1}{p}}
\end{align*}%
where in the last step we used H\"{o}lder's inequality. Now, by Lemma 7.24
of Massart \cite{Massart:07}, it also holds%
\begin{equation*}
\frac{1}{2}\int \left( f_{\ast }\wedge f\right) \left( \ln \left( \frac{f}{%
f_{\ast }}\right) \right) ^{2}d\mu \leq \mathcal{K}\left( f_{\ast },f\right) 
\text{ .}
\end{equation*}%
In order to prove (\ref{margin_like_soft_droite_gene}), it thus remains to
bound $I$ in terms of $p,c_{+}$ and $c_{-}$ only. First, we decompose $I$
into two parts,%
\begin{equation}
\int \frac{\left( f_{\ast }\vee f\right) ^{p}}{\left( f_{\ast }\wedge
f\right) ^{p-1}}\left( \ln \left( \frac{f}{f_{\ast }}\right) \right)
^{2}d\mu =\int \frac{f_{\ast }^{p}}{f^{p-1}}\left( \ln \left( \frac{f_{\ast }%
}{f}\right) \right) ^{2}\mathbbm{1}_{f_{\ast }\geq f}d\mu +\int \frac{f^{p}}{%
f_{\ast }^{p-1}}\left( \ln \left( \frac{f}{f_{\ast }}\right) \right) ^{2}%
\mathbbm{1}_{f\geq f_{\ast }}d\mu\text{.}  \label{ineq_0_gene}
\end{equation}%
For the first term in the right-hand side of (\ref{ineq_0_gene}), we get%
\begin{gather}
\int \frac{f_{\ast }^{p}}{f^{p-1}}\left( \ln \left( \frac{f_{\ast }}{f}%
\right) \right) ^{2}\mathbbm{1}_{f_{\ast }\geq f}d\mu \leq c_{-}^{1-p}\int
f_{\ast }^{p}\left( \ln \left( \frac{f_{\ast }}{c_{-}}\right) \right)
^{2}d\mu  \notag \\
\leq 4c_{-}^{1-p}\left( \left( \ln c_{-}\right) ^{2}\vee 1\right) \int
f_{\ast }^{p}\left( \left( \ln f_{\ast }\right) ^{2}\vee 1\right) d\mu
<+\infty \text{ ,}  \label{ineq_1_gene}
\end{gather}%
where in the second inequality we used the following fact: $\left(
a+b\right) ^{2}\leq 4\left( a^{2}\vee 1\right) \left( b^{2}\vee 1\right) $,
for any real numbers $a$ and $b$. The finiteness of the upper bound is
guaranteed by assumption (\ref{moment_p_bis}). For the second term in the
right-hand side of (\ref{ineq_0_gene}), it holds by same kind of arguments
that lead to (\ref{ineq_1_gene}),%
\begin{gather}
\int \frac{f^{p}}{f_{\ast }^{p-1}}\left( \ln \left( \frac{f}{f_{\ast }}%
\right) \right) ^{2}\mathbbm{1}_{f\geq f_{\ast }}d\mu \leq c_{+}^{p}\int 
\frac{1}{f_{\ast }^{p-1}}\left( \ln \left( \frac{c_{+}}{f_{\ast }}\right)
\right) ^{2}d\mu  \notag \\
\leq 4c_{+}^{p}\left( \left( \ln c_{+}\right) ^{2}\vee 1\right) \int f_{\ast
}^{1-p}\left( \left( \ln f_{\ast }\right) ^{2}\vee 1\right) d\mu <+\infty 
\text{ ,}  \label{ineq_2_gene}
\end{gather}%
where in the last inequality we used the following fact: $\left( a-b\right)
^{2}\leq 4\left( a^{2}\vee 1\right) \left( b^{2}\vee 1\right) $. Again, the
finiteness of the upper bound is guaranteed by (\ref{moment_p_bis}).
Inequality (\ref{margin_like_soft_droite_gene}) then follows from combining (%
\ref{ineq_0_gene}), (\ref{ineq_1_gene}) and (\ref{ineq_2_gene}).

Inequality (\ref{margin_like_soft_gauche_gene}) follows from the same kind
of computations. Indeed, we have by the use of H\"{o}lder's inequality,%
\begin{gather*}
P\left[ \left( \frac{f_{\ast }}{f}\vee 1\right) ^{r}\left( \ln \left( \frac{f%
}{f_{\ast }}\right) \right) ^{2}\right] =\int \left( \frac{f_{\ast }^{r+1}}{%
f^{r}}\vee f_{\ast }\right) \left( \ln \left( \frac{f}{f_{\ast }}\right)
\right) ^{2}d\mu \\
=\int \left( \frac{f_{\ast }^{r+1}\vee f_{\ast }f^{r}}{f^{r}\left( f_{\ast
}\wedge f\right) ^{1-\frac{r+1}{p}}}\left\vert \ln \left( \frac{f}{f_{\ast }}%
\right) \right\vert ^{\frac{2\left( r+1\right) }{p}}\right) \left( \left(
f_{\ast }\wedge f\right) \left( \ln \left( \frac{f}{f_{\ast }}\right)
\right) ^{2}\right) ^{1-\frac{r+1}{p}}d\mu \\
\leq \left( \int \left( f_{\ast }\wedge f\right) \left( \ln \left( \frac{f}{%
f_{\ast }}\right) \right) ^{2}d\mu \right) ^{1-\frac{r+1}{p}}\left( \underset%
{:=I_{r}}{\underbrace{\int \frac{f_{\ast }^{p}\vee f_{\ast }^{\frac{p}{r+1}%
}f^{\frac{rp}{r+1}}}{f^{\frac{rp}{r+1}}f_{\ast }^{\frac{p}{r+1}-1}\wedge
f^{p-1}}\left( \ln \left( \frac{f}{f_{\ast }}\right) \right) ^{2}d\mu }}%
\right) ^{\frac{r+1}{p}}\text{ .}
\end{gather*}%
In order to prove (\ref{margin_like_soft_gauche_gene}), it thus remains to
bound $I_{r}$ in terms of $p,c_{+}$ and $c_{-}$ only. Again, we split $I_{r}$
into two parts,%
\begin{equation}
\int \frac{f_{\ast }^{p}\vee f_{\ast }^{\frac{p}{r+1}}f^{\frac{rp}{r+1}}}{f^{%
\frac{rp}{r+1}}f_{\ast }^{\frac{p}{r+1}-1}\wedge f^{p-1}}\left( \ln \left( 
\frac{f}{f_{\ast }}\right) \right) ^{2}d\mu =\int \frac{f_{\ast }^{p}}{%
f^{p-1}}\left( \ln \left( \frac{f_{\ast }}{f}\right) \right) ^{2}\mathbbm{1}%
_{f_{\ast }\geq f}d\mu +\int f_{\ast }\left( \ln \left( \frac{f}{f_{\ast }}%
\right) \right) ^{2}\mathbbm{1}_{f\geq f_{\ast }}d\mu \text{ .}
\label{ineq_0_gauche_gene}
\end{equation}%
The first term in the right-hand side of (\ref{ineq_0_gauche_gene}) is given
by (\ref{ineq_1_gene}) above. For the second term in the right-hand side of (%
\ref{ineq_0_gauche_gene}), it holds%
\begin{align}
\int f_{\ast }\left( \ln \left( \frac{f}{f_{\ast }}\right) \right) ^{2}%
\mathbbm{1}_{f\geq f_{\ast }}d\mu \leq & \int f_{\ast }\left( \ln \left( 
\frac{c_{+}}{f_{\ast }}\right) \right) ^{2}d\mu  \notag \\
\leq & 2\left( \ln \left( c_{+}\right) \right) ^{2}+2P\left( \left( \ln
f_{\ast }\right) ^{2}\right) \text{ .}  \label{ineq_2_gene_bis}
\end{align}%
Furthermore we have $f_{\ast }^{p-1}+f_{\ast }^{-p}\geq 1$, so $P\left(
\left( \ln f_{\ast }\right) ^{2}\right) \leq P\left( f_{\ast }^{p-1}\left(
\ln f_{\ast }\right) ^{2}\right) +P\left( f_{\ast }^{-p}\left( \ln f_{\ast
}\right) ^{2}\right) \leq J+Q$ and by (\ref{ineq_2_gene_bis}), 
\begin{equation*}
\int f_{\ast }\left( \ln \left( \frac{f}{f_{\ast }}\right) \right) ^{2}%
\mathbbm{1}_{f\geq f_{\ast }}d\mu \leq 2\left( \left( \ln \left(
c_{+}\right) \right) ^{2}+J+Q\right) <+\infty \text{ ,}
\end{equation*}%
where the finiteness of the upper bound comes from (\ref{moment_p_bis}).
Inequality (\ref{margin_like_soft_gauche_gene}) then easily follows.
\end{proof}

\bigskip

\begin{proof}[Proof of Proposition \protect\ref%
{lemma_margin_like_bounded_below}]
Let us first prove Inequality (\ref{margin_like_soft}). Considering the
proof of Inequality (\ref{margin_like_soft_droite_gene}) of Lemma \ref%
{lem:margin_like_unbounded} given above, we see that it is sufficient to
bound the second term in the right-hand side of (\ref{ineq_0_gene}), applied
with $f=f_{m}$, in terms of $A_{\min },J$ and $p$ only. It holds%
\begin{equation}
\int \frac{f_{m}}{f_{\ast }^{p-1}}\left( \ln \left( \frac{f_{m}}{f_{\ast }}%
\right) \right) ^{2}\mathbbm{1}_{f_{m}\geq f_{\ast }}d\mu \leq A_{\min }\int
\left( \frac{f_{m}}{A_{\min }}\right) ^{p}\left( \ln \left( \frac{f_{m}}{%
A_{\min }}\right) \right) ^{2}d\mu \text{ .}  \label{ineq_2_lem}
\end{equation}%
Now, we set $h$ an auxilliary function, defined by $h\left( x\right)
=x^{p}\left( \ln x\right) ^{2}$ for any $x\geq 1$. It is easily seen that $h$
is convex. So it holds, for any $I\in \Lambda _{m}$,%
\begin{equation*}
h\left( \frac{f_{m}\left( I\right) }{A_{\min }}\right) =h\left( \int_{I}%
\frac{f_{\ast }}{A_{\min }}\frac{d\mu }{\mu \left( I\right) }\right) \leq
\int_{I}h\left( \frac{f_{\ast }}{A_{\min }}\right) \frac{d\mu }{\mu \left(
I\right) }\text{ .}
\end{equation*}%
From the latter inequality and from (\ref{ineq_2_lem}), we deduce%
\begin{equation*}
\int \frac{f_{m}}{f_{\ast }^{p-1}}\left( \ln \left( \frac{f_{m}}{f_{\ast }}%
\right) \right) ^{2}\mathbbm{1}_{f_{m}\geq f_{\ast }}d\mu \leq A_{\min }\int
h\left( \frac{f_{m}}{A_{\min }}\right) d\mu \leq A_{\min }\int h\left( \frac{%
f_{\ast }}{A_{\min }}\right) d\mu \leq 4A_{\min }^{1-p}\left( \left( \ln
A_{\min }\right) ^{2}\vee 1\right) J\text{ .}  
\end{equation*}%
Inequality (\ref{margin_like_soft}) is thus proved.

In the same manner, to establish Inequality (\ref{margin_like_gauche_soft})
it suffices to adapt the proof of inequality (\ref%
{margin_like_soft_gauche_gene}) given above by controlling the second term
in the right-hand side of (\ref{ineq_0_gauche_gene}), applied with $f=f_{m}$%
, in terms of $A_{\min },p$ and $P\left( \ln f_{\ast }\right) ^{2}$ only.
Let us notice that the function $f$ defined on $\left[ 1,+\infty \right) $
by $f\left( x\right) =x\left( \ln x\right) ^{2}$ is convex. We have%
\begin{equation*}
\int f_{\ast }\left( \ln \left( \frac{f_{m}}{f_{\ast }}\right) \right) ^{2}%
\mathbbm{1}_{f_{m}\geq f_{\ast }}d\mu \leq \int f_{m}\left( \ln \left( \frac{%
f_{m}}{A_{\min }}\right) \right) ^{2}d\mu =A_{\min }\int f\left( \frac{f_{m}%
}{A_{\min }}\right) d\mu \text{ .}
\end{equation*}%
Now, for any $I\in \Lambda _{m}$, it holds $f\left( \frac{f_{m}\left(
I\right) }{A_{\min }}\right) =f\left( \int_{I}\frac{f_{\ast }}{A_{\min }}%
\frac{d\mu }{\mu \left( I\right) }\right) \leq \int_{I}f\left( \frac{f_{\ast
}}{A_{\min }}\right) \frac{d\mu }{\mu \left( I\right) }$. Hence,%
\begin{equation*}
\int f_{\ast }\left( \ln \left( \frac{f_{m}}{f_{\ast }}\right) \right) ^{2}%
\mathbbm{1}_{f_{m}\geq f_{\ast }}d\mu \leq A_{\min }\int f\left( \frac{%
f_{\ast }}{A_{\min }}\right) d\mu \leq 2P\left( \ln f_{\ast }\right)
^{2}+2\left( \ln A_{\min }\right) ^{2}\text{ ,}
\end{equation*}%
which gives the desired upper-bound and proves (\ref{margin_like_gauche_soft}%
). In the event that $f_{\ast }\in L_{\infty }\left( \mu \right) $, we have to
prove (\ref{margin_like_opt}). We have ${\inf_{z\in \mathcal{Z}}}$\textup{$%
f_{\ast }$}${\left( z\right) \leq \inf_{z\in \mathcal{Z}}}$\textup{$f_{m}$}${%
\left( z\right) \leq \left\Vert f_{m}\right\Vert _{\infty }\leq \left\Vert
f_{\ast }\right\Vert _{\infty }}$, so it holds%
\begin{equation*}
P\left[ \left( \frac{f_{m}}{f_{\ast }}\vee 1\right) \left( \ln \left( \frac{%
f_{m}}{f_{\ast }}\right) \right) ^{2}\right] \vee P\left[ \left( \frac{%
f_{\ast }}{f_{m}}\vee 1\right) ^{r}\left( \ln \left( \frac{f_{m}}{f_{\ast }}%
\right) \right) ^{2}\right] \leq \left( \frac{{\left\Vert f_{\ast
}\right\Vert _{\infty }}}{A_{\min }}\right) ^{r\vee 1}P\left[ \left( \ln
\left( \frac{f_{m}}{f_{\ast }}\right) \right) ^{2}\right] \text{ .}
\end{equation*}%
Now, Inequality (\ref{margin_like_opt}) is a direct consequence of Lemma 1
of Barron and Sheu \cite{BarSheu:91}, which contains the following
inequality, 
\begin{equation*}
P\left[ \left( \ln \left( \frac{f_{m}}{f_{\ast }}\right) \right) ^{2}\right]
\leq 2\exp \left( \left\Vert \ln \left( \frac{f_{m}}{f_{\ast }}\right)
\right\Vert _{\infty }\right) \mathcal{K}\left( f_{\ast },f_{m}\right) \text{
}.
\end{equation*}%
This finishes the proof of Lemma \ref{lemma_margin_like_bounded_below}.
\end{proof}

\bibliographystyle{abbrv}
\bibliography{Slope_heuristics_regression_13}

\appendix{}
\newpage

\section{Further Proofs and Theoretical Results}
\label{sec:appendix}

\subsection{Proofs Related to Section \protect\ref{section_risks_bounds_MLE} 
\label{section_deviation_bounds}}

Most of the arguments given in the proofs of this section are borrowed from
Castellan \cite{Castellan:99}. We essentially rearrange these arguments in a
more efficient way, thus obtaining better concentration bounds than in \cite%
{Castellan:99} (or \cite{Massart:07}, see also \cite%
{BoucheronLugosiMassart:2013}).

\noindent We also set, for any $\varepsilon >0$, the event $\Omega
_{m}\left( \varepsilon \right) $ where some control of $\hat{f}_{m}$ in
sup-norm is achieved,%
\begin{equation*}
\Omega _{m}\left( \varepsilon \right) =\left\{ \left\Vert \frac{\hat{f}%
_{m}-f_{m}}{f_{m}}\right\Vert _{\infty }\leq \varepsilon \right\} \text{ .}
\end{equation*}%
As we have the following formulas for the estimators and the projections of
the target,%
\begin{equation*}
\hat{f}_{m}=\sum_{I\in \Lambda _{m}}\frac{P_{n}\left( I\right) }{\mu \left(
I\right) }\mathbbm{1} _{I}\text{ \ \ },\text{ \ \ }f_{m}=\sum_{I\in \Lambda
_{m}}\frac{P\left( I\right) }{\mu \left( I\right) }\mathbbm{1} _{I}\text{ },
\end{equation*}%
we deduce that,

\begin{equation}
\left\Vert \frac{\hat{f}_{m}-f_{m}}{f_{m}}\right\Vert _{\infty }=\sup_{I\in
\Lambda _{m}}\frac{\left\vert \left( P_{n}-P\right) \left( I\right)
\right\vert }{P\left( I\right) }\text{ .}  \label{1-1}
\end{equation}%
Hence, it holds $\Omega _{m}\left( \varepsilon \right) =\bigcap_{I\in
m}\left\{ \left\vert P_{n}\left( I\right) -P\left( I\right) \right\vert \leq
\varepsilon P\left( I\right) \right\} $.

Let us state the main result of this section, concerning upper and lower
bounds for the true and empirical excess risks of the histograms on each
model. These bounds are optimal to the first order.

Before giving the proof of Theorem \ref{opt_bounds}, the following lemma
will be useful. It describes the consistency in sup-norm of the histogram
estimators, suitably normalized by the projections of the target on each
model.

\begin{lemma}
\label{prop_consistency_normalized_MLE}Let $\alpha ,A_{+}$ and $A_{\Lambda }$
be positive constants. Consider a finite partition $m$ of $\mathcal{Z}$,
with cardinality $D_{m}$. Assume%
\begin{equation}
0<A_{\Lambda }\leq D_{m}\inf_{I\in m}\left\{ P\left( I\right) \right\} \text{%
\ \ \ and \ \ }0<D_{m}\leq A_{+}\frac{n}{\ln (n+1)}\leq n\text{ .}
\label{lower_mu_2}
\end{equation}%
Then by setting%
\begin{equation}
R_{n}^{\infty }\left( m\right) =\sqrt{\frac{2\left( \alpha +1\right)
D_{m}\ln (n+1)}{A_{\Lambda }n}}+\frac{\left( \alpha +1\right) D_{m}\ln (n+1)%
}{A_{\Lambda }n}\text{ ,}  \label{def_R_n_infinity}
\end{equation}%
we get%
\begin{equation}
\mathbb{P}\left( \left\Vert \frac{\hat{f}_{m}-f_{m}}{f_{m}}\right\Vert
_{\infty }\leq R_{n}^{\infty }\left( m\right) \right) \geq 1-2(n+1)^{-\alpha
}\text{ .}  \label{rate_sup_norm_histo_MLE_2}
\end{equation}%
In other words, $\mathbb{P}\left( \Omega _{m}\left( R_{n}^{\infty }\left(
m\right) \right) \right) \geq 1-2(n+1)^{-\alpha }$ . In addition, there
exists a positive constant $A_{c}$, only depending on $\alpha ,A_{+}$ and $%
A_{\Lambda }$, such that $R_{n}^{\infty }\left( m\right) \leq A_{c}\sqrt{%
\frac{D_{m}\ln (n+1)}{n}}$. Furthermore, if%
\begin{equation*}
\frac{\left( \alpha +1\right) A_{+}}{A_{\Lambda }}\leq \tau =\sqrt{3}-\sqrt{2}<0.32%
\text{ ,}
\end{equation*}%
or if $n\geq n_{0}\left( \alpha ,A_{+},A_{\Lambda }\right) $, then $%
R_{n}^{\infty }\left( m\right) \leq1/2$ .
\end{lemma}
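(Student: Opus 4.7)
The plan is to reduce the sup-norm bound to a cell-by-cell Bernstein deviation estimate combined with a union bound over the partition, and then to carry out two elementary numerical verifications for the consequences.

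First I would invoke the identity \eqref{1-1}, which expresses the left-hand side of \eqref{rate_sup_norm_histo_MLE_2} as $\sup_{I\in\Lambda_m} |(P_n-P)(I)|/P(I)$, thereby reducing the task to controlling each centered empirical cell mass. For a fixed $I\in\Lambda_m$, the variables $\mathbbm{1}_I(\xi_i)$ are i.i.d.\ Bernoulli$(P(I))$, bounded by $1$ with variance at most $P(I)$, so Bernstein's inequality gives, for every $x>0$ with probability at least $1-2e^{-x}$,
$$
|(P_n-P)(I)| \leq \sqrt{\frac{2P(I)x}{n}} + \frac{x}{3n}.
$$
Dividing by $P(I)$ and inserting the lower-regularity assumption $P(I)\geq A_\Lambda/D_m$ yields $|(P_n-P)(I)|/P(I) \leq \sqrt{2D_mx/(nA_\Lambda)} + D_mx/(3nA_\Lambda)$.

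Next I would choose $x=(\alpha+1)\ln(n+1)$ and take a union bound over the $|\Lambda_m|=D_m+1$ cells. Since $D_m\leq n$ by hypothesis, $D_m+1\leq n+1$, so the total bad-event probability is at most $2(D_m+1)(n+1)^{-(\alpha+1)}\leq 2(n+1)^{-\alpha}$. On the complementary event, the supremum is bounded by the right-hand side of the per-cell estimate, which is dominated by $R_n^\infty(m)$ (the harmless $1/3$ in front of the linear term being absorbed); this establishes \eqref{rate_sup_norm_histo_MLE_2}.

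The two remaining assertions are direct numerical consequences. For $R_n^\infty(m)\leq A_c\sqrt{D_m\ln(n+1)/n}$, I would use $D_m\leq A_+n/\ln(n+1)$ to obtain $D_m\ln(n+1)/n\leq A_+$, so that the linear term is bounded by a constant (depending only on $\alpha,A_+,A_\Lambda$) times the square-root term. For $R_n^\infty(m)\leq 1/2$, I would set $r=(\alpha+1)D_m\ln(n+1)/(A_\Lambda n)$, observe that $r\leq (\alpha+1)A_+/A_\Lambda\leq \tau$ by the standing assumption, and verify by direct calculation that $r\mapsto\sqrt{2r}+r$ is small enough at the chosen threshold $\tau$; the alternative assumption $n\geq n_0(\alpha,A_+,A_\Lambda)$ forces $r$ to be small in the same way.

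The argument contains no real obstacle: it is a textbook Bernstein-plus-union-bound application. The only point requiring attention is aligning the number of cells $|\Lambda_m|\leq n+1$ with the deviation level $x=(\alpha+1)\ln(n+1)$ so the exponent telescopes cleanly to $(n+1)^{-\alpha}$, and tracking the dependence of constants on $A_+, A_\Lambda, \alpha$ in the final two assertions.
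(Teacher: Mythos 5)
Your reduction to a per-cell Bernstein bound plus a union bound over the partition is exactly the argument the paper uses, with the same choice $x=(\alpha+1)\ln(n+1)$, and the probability bound (\ref{rate_sup_norm_histo_MLE_2}) together with the estimate $R_n^\infty(m)\le A_c\sqrt{D_m\ln(n+1)/n}$ go through as you describe. (Whether the partition has $D_m$ or $D_m+1$ cells is immaterial since either is at most $n+1$; the paper uses $D_m\le n$ directly.)

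The final assertion $R_n^\infty(m)\le 1/2$ is where you defer to a ``direct calculation'' that you do not actually carry out, and that calculation fails. Writing $r=(\alpha+1)D_m\ln(n+1)/(A_\Lambda n)$, so that $R_n^\infty(m)=\sqrt{2r}+r$, the hypothesis only gives $r\le\tau=\sqrt{3}-\sqrt{2}\approx 0.318$, and at that value
\[
\sqrt{2\tau}+\tau\approx 0.797+0.318\approx 1.12,
\]
which exceeds $1/2$ (it even exceeds $1$). Reinstating the factor $1/3$ from Bernstein in the linear term, which the paper drops when passing from the Bernstein deviation to $R_n^\infty(m)$, only brings this down to about $0.90$. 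Forcing $\sqrt{2r}+r\le 1/2$ would require $r\le 3/2-\sqrt{2}\approx 0.086$, roughly four times smaller than the stated threshold $\sqrt{3}-\sqrt{2}$. So the numerical check you wave at does not close: this is a step that would fail if executed. It appears to reflect an inaccuracy in the paper's own statement --- the paper's proof merely says the remaining facts ``follow from simple computations'' without displaying them --- but as a proof of the statement as written, your proposal has a gap here, and a careful writeup would have surfaced the discrepancy (and could then fall back to the alternative hypothesis $n\ge n_0(\alpha,A_+,A_\Lambda)$, which does make $r$ arbitrarily small because $r\le (\alpha+1)A_+/A_\Lambda$ is only a worst-case bound while $r\to 0$ for fixed $D_m$ as $n\to\infty$ is false in general --- one also needs $D_m=o(n/\ln n)$, which (\ref{lower_mu_2}) alone does not grant uniformly; so even the fallback needs care).
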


\begin{proof}[Proof of Lemma \protect\ref{prop_consistency_normalized_MLE}]
Let $\beta >0$ to be fixed later. Recall that, by (\ref{1-1}) we have 
\begin{equation}
\left\Vert \frac{\hat{f}_{m}-f_{m}}{f_{m}}\right\Vert _{\infty }=\sup_{I\in
\Lambda _{m}}\frac{\left\vert \left( P_{n}-P\right) \left( I\right)
\right\vert }{P\left( I\right) }\text{ }.  \label{normalized}
\end{equation}%
By Bernstein's inequality (see Proposition 2.9 in \cite{Massart:07}) applied
to the random variables $\mathbbm{1}_{\xi _{i}\in I}$ we get, for all $x>0$, 
\begin{equation*}
\mathbb{P}\left[ \left\vert \left( P_{n}-P\right) \left( I\right)
\right\vert \geq \sqrt{\frac{2P\left( I\right) x}{n}}+\frac{x}{3n}\right]
\leq 2\exp \left( -x\right) \text{ }.
\end{equation*}%
Taking $x=\beta \ln (n+1)$ and normalizing by the quantity $P\left( I\right)
>0$\ we get 
\begin{equation}
\mathbb{P}\left[ \frac{\left\vert \left( P_{n}-P\right) \left( I\right)
\right\vert }{P\left( I\right) }\geq \sqrt{\frac{2\beta \ln (n+1)}{P\left(
I\right) n}}+\frac{\beta \ln (n+1)}{P\left( I\right) 3n}\right] \leq
2(n+1)^{-\beta }\text{ }.  \label{dev_normalized}
\end{equation}%
Now, by the first inequality in (\ref{lower_mu_2}), we have $0<P\left(
I\right) ^{-1}\leq A_{\Lambda }^{-1}D_{m}$. Hence, using (\ref%
{dev_normalized}) we get 
\begin{equation}
\mathbb{P}\left[ \frac{\left\vert \left( P_{n}-P\right) \left( I\right)
\right\vert }{P\left( I\right) }\geq \sqrt{\frac{2\beta D_{m}\ln (n+1)}{%
A_{\Lambda }n}}+\frac{\beta D_{m}\ln (n+1)}{A_{\Lambda }n}\right] \leq
2(n+1)^{-\beta }\text{ ,}  \label{dev_norm_2}
\end{equation}%
We then deduce from (\ref{normalized}) and (\ref{dev_norm_2}) that 
\begin{equation*}
\mathbb{P}\left[ \left\Vert \frac{\hat{f}_{m}-f_{m}}{f_{m}}\right\Vert
_{\infty }\geq R_{n}^{\infty }\left( m\right) \right] \leq \frac{2D_{m}}{%
(n+1)^{\beta }}
\end{equation*}%
and, since $D_{m}\leq n$, taking $\beta =\alpha +1$ yields Inequality (\ref%
{rate_sup_norm_histo_MLE_2}). The other facts of Lemma \ref%
{prop_consistency_normalized_MLE}\ then follow from simple computations.
\end{proof}

\begin{proof}[Proof of Theorem \protect\ref{opt_bounds}]
Recall that $\alpha $ is fixed. By Inequality (\ref{upper_chi_part}) in
Proposition \ref{prop_upper_dev_chi_2} applied with $\theta =\mathnormal{R}%
_{n}^{\infty }\left( m\right) $---where $\mathnormal{R}_{n}^{\infty }\left(
m\right) $ is defined in (\ref{def_R_n_infinity}) with our fixed value of $%
\alpha $---and $x=\alpha \ln (n+1)$, it holds with probability at least $%
1-(n+1)^{-\alpha }$,%
\begin{equation}
\chi _{n}\left( m\right) \boldsymbol{1}_{\Omega _{m}\left( \mathnormal{R}%
_{n}^{\infty }\left( m\right) \right) }\leq \sqrt{\frac{D_{m}}{n}}+\left( 1+%
\sqrt{2\mathnormal{R}_{n}^{\infty }\left( m\right) }+\frac{\mathnormal{R}%
_{n}^{\infty }\left( m\right) }{6}\right) \sqrt{\frac{2\alpha \ln (n+1)}{n}}%
\text{ .}  \label{upper_chi_1}
\end{equation}%
As $\mathnormal{R}_{n}^{\infty }\left( m\right) \leq L_{\alpha
,A_{+},A_{\Lambda }}\sqrt{D_{m}\ln (n+1)/n}\leq L_{\alpha ,A_{+},A_{\Lambda
}}$, (\ref{upper_chi_1}) gives%
\begin{align}
\chi _{n}\left( m\right) \boldsymbol{1}_{\Omega _{m}\left( \mathnormal{R}%
_{n}^{\infty }\left( m\right) \right) }\leq & \sqrt{\frac{D_{m}}{n}}%
+L_{\alpha ,A_{+},A_{\Lambda }}\sqrt{\frac{\ln (n+1)}{n}}  \notag \\
=& \sqrt{\frac{D_{m}}{n}}\left( 1+L_{\alpha ,A_{+},A_{\Lambda }}\sqrt{\frac{%
\ln (n+1)}{D_{m}}}\right) \text{ .}  \label{upper_chi_2}
\end{align}%
We set the event $\Omega _{0}$ on which we have%
\begin{equation*}
\left\Vert \frac{\hat{f}_{m}-f_{m}}{f_{m}}\right\Vert _{\infty }\leq
R_{n}^{\infty }\left( m\right) \text{ ,} 
\end{equation*}%
\begin{equation}
\chi _{n}\left( m\right) \leq \sqrt{\frac{D_{m}}{n}}\left( 1+L_{\alpha
,A_{+},A_{\Lambda }}\sqrt{\frac{\ln (n+1)}{D_{m}}}\right) \text{ ,}
\label{assump_upper_chi}
\end{equation}%
and%
\begin{equation*}
\chi _{n}\left( m\right) \geq \left( 1-A_{g}\left( \sqrt{\frac{\ln (n+1)}{%
D_{m}}}\vee \frac{\sqrt{\ln (n+1)}}{n^{1/4}}\right) \right) \sqrt{\frac{D_{m}%
}{n}}\text{ .}  
\end{equation*}%
In particular, $\Omega _{0}\subset \Omega _{m}\left( R_{n}^{\infty }\left(
m\right) \right) $. By (\ref{upper_chi_2}), Lemma \ref%
{prop_consistency_normalized_MLE} and Proposition \ref{prop_dev_gauche_chi},
it holds $\mathbb{P}\left( \Omega _{0}\right) \geq 1-4(n+1)^{-\alpha }$. It
suffices to prove the inequalities of Theorem \ref{opt_bounds}\ on $\Omega
_{0}$. The following inequalities, between the excess risk on $m$ and the
chi-square statistics, are shown in \cite{Castellan:99} (Inequalities
(2.13)). For any $\varepsilon \in \left( 0,1\right) $, on $\Omega _{m}\left(
\varepsilon \right) $, 
\begin{equation}
\frac{1-\varepsilon }{2\left( 1+\varepsilon \right) ^{2}}\chi _{n}^{2}\left(
m\right) \leq \mathcal{K}\left( f_{m},\hat{f}_{m}\right) \leq \frac{%
1+\varepsilon }{2\left( 1-\varepsilon \right) ^{2}}\chi _{n}^{2}\left(
m\right) \text{ }.  \label{castel_2_13}
\end{equation}%
Under the condition $\left( \alpha +1\right) A_{+}A_{\Lambda }^{-1}<\tau $,
Proposition \ref{prop_consistency_normalized_MLE}\ gives $\mathnormal{R}%
_{n}^{\infty }\left( m\right) <1$. Hence, by applying the right-hand side of
(\ref{castel_2_13}) with $\varepsilon =\mathnormal{R}_{n}^{\infty }\left(
m\right) \leq 1/2$, using (\ref{assump_upper_chi}) and the fact that $(1-\epsilon)^{-1}\leq 1+2\epsilon$ for $\epsilon \leq 1/2$, we get on $\Omega _{0}$,%
\begin{align*}
\mathcal{K}\left( f_{m},\hat{f}_{m}\right) \leq & \frac{1+\mathnormal{R}%
_{n}^{\infty }\left( m\right) }{2\left( 1-\mathnormal{R}_{n}^{\infty }\left(
m\right) \right) ^{2}}\chi _{n}^{2}\left( m\right) \\
\leq & \left( \frac{1}{2}+L_{\alpha ,A_{+},A_{\Lambda }}\sqrt{\frac{D_{m}\ln
(n+1)}{n}}\right) \frac{D_{m}}{n}\left( 1+L_{\alpha ,A_{+},A_{\Lambda }}%
\sqrt{\frac{\ln (n+1)}{D_{m}}}\right) ^{2}\text{ .}
\end{align*}%
Then simple computations allow to get the right-hand side inequality in (\ref%
{upper_true_risk}).

By applying the left-hand side of (\ref{castel_2_13}) with $\varepsilon
=R_{n}^{\infty }\left( m\right)\leq 1/2 $ and using the fact that $(1+\epsilon)^{-1}\geq 1-\epsilon$, we also get on $\Omega _{0}$,%
\begin{align*}
\mathcal{K}\left( f_{m},\hat{f}_{m}\right) \geq & \frac{1-R_{n}^{\infty
}\left( m\right) }{2\left( 1+R_{n}^{\infty }\left( m\right) \right) ^{2}}%
\chi _{n}^{2}\left( m\right) \\
\geq & (1-R_{n}^{\infty }\left( m\right))^3\left( 1-A_{g}\left( \sqrt{\frac{\ln (n+1)}{%
D_{m}}}\vee \frac{\sqrt{\ln (n+1)}}{n^{1/4}}\right) \right) ^{2}\frac{D_{m}}{2n} \\
\geq & (1-(3R_{n}^{\infty }\left( m\right)\wedge 1))\left( 1-2A_{g}\left( \sqrt{\frac{\ln (n+1)}{%
D_{m}}}\vee \frac{\sqrt{\ln (n+1)}}{n^{1/4}}\right) \right) \frac{D_{m}}{2n}
\end{align*}%
The left-hand side inequality in (\ref{upper_true_risk}) then follows by
simple computations, noticing in particular that $n^{-1/4}\sqrt{\ln (n+1)}\leq \sqrt{\ln
(n+1)/D_{m}}\vee \sqrt{D_{m}\ln (n+1)/n}$.

Inequalities in (\ref{upper_emp_risk}) follow from the same kind of
arguments as those involved in the proofs of inequalities in (\ref%
{upper_true_risk}). Indeed, from \cite[Lemma 7.24]{Massart:07}---or \cite[
Lemma 2.3]{Castellan:99} ---, it holds 
\begin{equation*}
\frac{1}{2}\int \left( \hat{f}_{m}\wedge f_{m}\right) \left( \ln \frac{\hat{f%
}_{m}}{f_{m}}\right) ^{2}d\mu \leq \mathcal{K}\left( \hat{f}%
_{m},f_{m}\right) \leq \frac{1}{2}\int \left( \hat{f}_{m}\vee f_{m}\right)
\left( \ln \frac{\hat{f}_{m}}{f_{m}}\right) ^{2}d\mu \text{ }.
\end{equation*}%
We deduce that for $\varepsilon \in \left( 0,1\right) $, we have on $\Omega
_{m}\left( \varepsilon \right) $, 
\begin{equation}
\frac{1-\varepsilon }{2}\int f_{m}\left( \ln \frac{\hat{f}_{m}}{f_{m}}%
\right) ^{2}d\mu \leq \mathcal{K}\left( \hat{f}_{m},f_{m}\right) \leq \frac{%
1+\varepsilon }{2}\int f_{m}\left( \ln \frac{\hat{f}_{m}}{f_{m}}\right)
^{2}d\mu \text{ .}  \label{bound_K_ln}
\end{equation}%
As for every $x>0$, we have $\left( 1\vee x\right) ^{-1}\leq \left(
x-1\right) ^{-1}\ln x\leq \left( 1\wedge x\right) ^{-1}$, (\ref{bound_K_ln})
leads by simple computations to the following inequalities 
\begin{equation*}
\frac{1-\varepsilon }{2\left( 1+\varepsilon \right) ^{2}}\chi _{n}^{2}\left(
m\right) \leq \mathcal{K}\left( \hat{f}_{m},f_{m}\right) \leq \frac{%
1+\varepsilon }{2\left( 1-\varepsilon \right) ^{2}}\chi _{n}^{2}\left(
m\right) \text{ }.
\end{equation*}%
We thus have the same upper and lower bounds, in terms of the chi-square
statistic $\chi _{n}^{2}\left( m\right) $, for the empirical excess risk as
for the true excess risk.
\end{proof}

\subsection{Oracle inequalities and dimension guarantees\label%
{section_oracle_proofs}}

Using the notations of Section \ref{section_results_bounded_setting}, we
define the set of assumptions (\textbf{SA}$_{0}$) to be the conjunction of
assumptions (\textbf{P1}), (\textbf{P2}), (\textbf{Asm}) and (\textbf{Alr}).
The set of assumptions (\textbf{SA}) of Section \ref%
{section_results_bounded_setting} thus consists on assuming (\textbf{SA}$%
_{0} $) together with (\textbf{Ap}).

For some of the following results, we will also need the following
assumptions.

\begin{description}
\item[(P3)] Richness of $\mathcal{M}_{n}$: there exist $m_{0},m_{1}\in 
\mathcal{M}_{n}$ such that $D_{m_{0}}\in \left[ \sqrt{n},c_{rich}\sqrt{n}%
\right] $ and $D_{m_{1}}\geq A_{rich}n\left( \ln (n+1)\right) ^{-2}.$

\item[(Ap$_{u}$)] The bias decreases as a power of $D_{m}$: there exist $%
\beta _{+}>0$ and $C_{+}>0$ such that 
\begin{equation*}
\mathcal{K}\left( f_{\ast },f_{m}\right) \leq C_{+}D_{m}^{-\beta _{+}}\text{ 
}.
\end{equation*}

\item[(\textbf{Ap})] The bias decreases like a power of $D_{m}$: there exist 
$\beta _{-}\geq \beta _{+}>0$ and $C_{+},C_{-}>0$ such that%
\begin{equation*}
C_{-}D_{m}^{-\beta _{-}}\leq \mathcal{K}\left( f_{\ast },f_{m}\right) \leq
C_{+}D_{m}^{-\beta _{+}}\text{ }.
\end{equation*}
\end{description}

Theorem \ref{theorem_opt_pen_MLE} is a direct corollary of the following
theorem.

\begin{theorem}
\label{theorem_opt_pen_MLE_gene}Take $n\geq 1$ and $r\in \left( 0,p-1\right) 
$. Assume that the set of assumptions (\textbf{SA}$_{0}$) holds and that for
some $\theta \in \left( 1/2,+\infty \right) $ and $\Delta >0$,%
\begin{equation*}
\pen%
\left( m\right) =\left( \theta +\Delta \varepsilon _{n}^{+}\left( m\right)
\right) \frac{D_{m}}{n}\text{ ,}  
\end{equation*}%
for every model $m\in \mathcal{M}_{n}$. Then there exists an event $\Omega
_{n}$ of probability $1-(n+1)^{-2}$ and some positive constants $A_{1},$ $%
A_{2}$ depending only on the constants defined in (\textbf{SA}$_{0}$) such
that, if $\Delta \geq \left( \theta -1\right) _{-}A_{1}+A_{2}>0$ then we
have on $\Omega _{n}$,%
\begin{equation}
\mathcal{K}\left( f_{\ast },\hat{f}_{\widehat{m}}\right) \leq \frac{%
1+2\left( \theta -1\right) _{+}+L_{\text{(\textbf{SA})},\theta ,r}\left( \ln
(n+1)\right) ^{-1/2}}{1-2\left( \theta -1\right) _{-}}\inf_{m\in \mathcal{M}%
_{n}}\left\{ \mathcal{K}\left( f_{\ast },\hat{f}_{m}\right) \right\} +L_{%
\text{(\textbf{SA})},\theta ,r}\frac{\left( \ln (n+1)\right) ^{\frac{3p-1-r}{%
2\left( p+1+r\right) }}}{n^{\frac{p}{p+1+r}}}\text{ }.  \label{oracle_gene_1}
\end{equation}%
Assume furthermore that Assumption (\textbf{Ap}$_{u}$) holds. Then it holds
on $\Omega _{n}$,%
\begin{equation*}
D_{\widehat{m}}\leq L_{\text{(\textbf{SA)}},\Delta ,\theta ,r}n^{^{\frac{1}{%
2+\beta _{+}\left( 1-\frac{r+1}{p}\right) }}}\sqrt{\ln (n+1)}\text{ \ \ , \
\ }D_{m_{\ast }}\leq L_{\text{(\textbf{SA})}}n^{\frac{1}{1+\beta _{+}}}\text{
.}
\end{equation*}%
In particular, if we are in the case where $p<\beta _{+}$\ then Inequality (%
\ref{oracle_gene_1}) reduces to 
\begin{equation}
\mathcal{K}\left( f_{\ast },\hat{f}_{\widehat{m}}\right) \leq L_{\text{(%
\textbf{SA})},\theta ,r}\frac{\left( \ln (n+1)\right) ^{\frac{3p-1-r}{%
2\left( p+1+r\right) }}}{n^{\frac{p}{p+1+r}}}\text{ .}
\label{bound_unfavorable}
\end{equation}%
Grant Assumption (\textbf{Ap}). Then it holds on $\Omega _{n}$,%
\begin{equation*}
L_{\Delta ,\theta ,\text{(\textbf{SA})}}^{(1)}\frac{n^{\frac{\beta _{+}}{%
\beta _{-}\left( 1+\beta _{+}\right) }}}{\left( \ln (n+1)\right) ^{\frac{1}{%
\beta _{-}}}}\leq D_{\widehat{m}}\leq L_{\Delta ,\theta ,\text{(\textbf{SA})}%
}^{(2)}n^{^{\frac{1}{2+\beta _{+}\left( 1-\frac{r+1}{p}\right) }}}\sqrt{\ln
(n+1)}\text{ ,}
\end{equation*}%
\begin{equation*}
L_{\text{(\textbf{SA})}}^{(1)}n^{\frac{\beta _{+}}{\left( 1+\beta
_{+}\right) \beta _{-}}}\leq D_{m_{\ast }}\leq L_{\text{(\textbf{SA})}%
}^{(2)}n^{\frac{1}{1+\beta _{+}}}
\end{equation*}%
and%
\begin{equation}
\mathcal{K}\left( f_{\ast },\hat{f}_{\widehat{m}}\right) \leq \frac{%
1+2\left( \theta -1\right) _{+}+L_{\text{(\textbf{SA})},\theta ,r}n^{-\frac{%
\beta _{+}}{\left( 1+\beta _{+}\right) \beta _{-}}}\sqrt{\ln (n+1)}}{%
1-2\left( \theta -1\right) _{-}}\inf_{m\in \mathcal{M}_{n}}\left\{ \mathcal{K%
}\left( f_{\ast },\hat{f}_{m}\right) \right\} +L_{\text{(\textbf{SA})}%
,\theta ,r}\frac{\left( \ln (n+1)\right) ^{\frac{3p-1-r}{2\left(
p+1+r\right) }}}{n^{\frac{p}{p+1+r}}}\text{ .}  \label{oracle_um_ap}
\end{equation}%
Furthermore, if $\beta _{-}<p\left( 1+\beta _{+}\right) /(1+p+r)$ or $%
p/(1+r)>\beta _{-}+\beta _{-}/\beta _{+}-1$, then we have on $\Omega _{n}$,%
\begin{equation}
\mathcal{K}\left( f_{\ast },\hat{f}_{\widehat{m}}\right) \leq \frac{%
1+2\left( \theta -1\right) _{+}+L_{\text{(\textbf{SA})},\theta ,r}\left( \ln
(n+1)\right) ^{-1/2}}{1-2\left( \theta -1\right) _{-}}\inf_{m\in \mathcal{M}%
_{n}}\left\{ \mathcal{K}\left( f_{\ast },\hat{f}_{m}\right) \right\} \text{ }%
.  \label{oracle_um_ap_opt}
\end{equation}
\end{theorem}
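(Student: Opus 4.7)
The plan is to start from the deterministic decomposition exposed in Section~\ref{ssection_ideal_pen}: for each benchmark $m\in\mathcal{M}_n$,
\begin{align*}
\mathcal{K}(f_\ast,\hat f_{\widehat m})\leq{}& \mathcal{K}(f_\ast,\hat f_m)
+\bigl[\pen(m)-\mathcal{K}(f_m,\hat f_m)-\mathcal{K}(\hat f_m,f_m)\bigr]\\
&-\bigl[\pen(\widehat m)-\mathcal{K}(f_{\widehat m},\hat f_{\widehat m})-\mathcal{K}(\hat f_{\widehat m},f_{\widehat m})\bigr]
+(P_n-P)(\gamma(f_m)-\gamma(f_{\widehat m})).
\end{align*}
The objective will be to construct a single event of probability at least $1-(n+1)^{-2}$, assembled by union bounds over the at most $n^{\alpha_{\mathcal{M}}}$ models allowed by (\textbf{P1}), on which the two bracketed terms are essentially $(\theta-1)D_m/n$ up to $\varepsilon_n^\pm(m)D_m/n$ remainders, while the empirical bias term is absorbable in $\eta\bigl(\mathcal{K}(f_\ast,\hat f_m)+\mathcal{K}(f_\ast,\hat f_{\widehat m})\bigr)$ plus a polynomial residual. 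Rearranging and taking the infimum over $m$ will produce (\ref{oracle_gene_1}); specializing under (\textbf{Ap}$_u$) and (\textbf{Ap}) will yield the dimension bounds, and condition (\ref{condition_beta}) will be what ensures the residual is dominated by the leading oracle term, yielding (\ref{oracle_um_ap_opt}).

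The first step is to apply Theorem~\ref{opt_bounds} uniformly over $\mathcal{M}_n$. Assumptions (\textbf{P2}), (\textbf{Asm}) and (\textbf{Alr}) jointly imply its hypotheses with $\alpha=\alpha_{\mathcal{M}}+3$ (using $P(I)\geq A_{\min}\mu(I)$), hence a union bound delivers, on an event of probability at least $1-4(n+1)^{-3}$,
\[
\Bigl|\mathcal{K}(f_m,\hat f_m)+\mathcal{K}(\hat f_m,f_m)-\tfrac{D_m}{n}\Bigr|\leq A_0\varepsilon_n^+(m)\tfrac{D_m}{n}\quad\text{for all }m\in\mathcal{M}_n.
\]
Combined with $\pen(m)=(\theta+\Delta\varepsilon_n^+(m))D_m/n$, this bounds both bracketed terms by $(\theta-1)D_m/n\pm(A_0+\Delta)\varepsilon_n^+(m)D_m/n$ (resp.\ at $\widehat m$). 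Choosing $\Delta\geq A_1:=A_0$ will make these remainders absorbable into the $\delta_n$ correction of (\ref{oracle_gene_1}), up to the $2(\theta-1)_-$ factor arising when $\theta<1$.

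The second and more delicate step is the control of $(b)=(P_n-P)(\ln(f_{\widehat m}/f_m))$. I split it as $(P_n-P)(\ln(f_\ast/f_m))-(P_n-P)(\ln(f_\ast/f_{\widehat m}))$ and apply Propositions~\ref{delta_bar_dev_droite_MLE} and~\ref{delta_bar_dev_gauche_MLE} to each term. Under (\textbf{Asm}) the projections $f_m=\sum_I(P(I)/\mu(I))\mathbf{1}_I$ inherit the uniform lower bound $A_{\min}$ by averaging $f_\ast\geq A_{\min}$, so Proposition~\ref{lemma_margin_like_bounded_below} yields
\[
v_m\vee w_{r,m}\leq A_{MR,-}\,\mathcal{K}(f_\ast,f_m)^{1-(r+1)/p},\qquad 0<r<p-1,
\]
together with $\mathcal{K}(f_\ast,f_m)\leq\mathcal{K}(f_\ast,\hat f_m)$ from the Pythagorean identity (\ref{pythagore_relation_MLE}). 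Taking $z=\alpha\ln(n+1)$ and union-bounding over $\mathcal{M}_n$, Young's inequality with conjugate exponent $2(p+1+r)/p$ rewrites $\sqrt{2v_m z/n}$ as $\eta\,\mathcal{K}(f_\ast,\hat f_m)+C(\eta,p,r)(\ln(n+1))^{(3p-1-r)/(2(p+1+r))}n^{-p/(p+1+r)}$; applying the same to $\widehat m$, injecting into the decomposition, and choosing $\eta$ of order $(\ln(n+1))^{-1/2}$ produces (\ref{oracle_gene_1}).

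For the dimension statements, I would exploit the inequality $\pen(\widehat m)\leq P_n\gamma(\hat f_m)-P_n\gamma(\hat f_{\widehat m})+\pen(m)$ combined with the same Theorem~\ref{opt_bounds} controls: under (\textbf{Ap}$_u$), $\pen(\widehat m)\asymp D_{\widehat m}/n$ must be dominated by the best bias-variance balance $C_+ D_m^{-\beta_+}+D_m/n$, giving the stated upper bound $D_{\widehat m}\lesssim n^{1/(2+\beta_+(1-(r+1)/p))}\sqrt{\ln n}$ and $D_{m_\ast}\lesssim n^{1/(1+\beta_+)}$. The matching lower bounds under (\textbf{Ap}) come from $\mathcal{K}(f_\ast,\hat f_{\widehat m})\geq\mathcal{K}(f_\ast,f_{\widehat m})\geq C_- D_{\widehat m}^{-\beta_-}$ plugged into the already-proved oracle inequality. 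The hard part will be the final absorption step: the loss of a power $1-(r+1)/p<1$ in the margin relation produces the residual rate $n^{-p/(p+1+r)}$, which is a priori incomparable to the oracle rate $\inf_m\mathcal{K}(f_\ast,\hat f_m)\asymp n^{-\beta_+/(1+\beta_+)}$ up to polylogarithmic factors; condition~(\ref{condition_beta}) is precisely the algebraic requirement guaranteeing that this residual is dominated by the oracle rate, and the careful exponent bookkeeping needed to verify it is what separates the weak bound (\ref{oracle_gene_1}) from the sharp inequality (\ref{oracle_um_ap_opt}).
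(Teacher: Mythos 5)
Your proposal reproduces the paper's own argument almost line by line: the decomposition via $\crit'(m)=\mathcal{K}(f_\ast,\hat f_m)+\pen(m)-p_1(m)-p_2(m)-\bar\delta(m)$, the uniform application of Theorem~\ref{opt_bounds} on an event of polynomial probability, the control of $\bar\delta$ through Propositions~\ref{delta_bar_dev_droite_MLE}--\ref{delta_bar_dev_gauche_MLE} combined with the margin relation of Proposition~\ref{lemma_margin_like_bounded_below}, the Young-type absorption (Lemma~\ref{prop_ineq_tech}) with $\eta\asymp(\ln(n+1))^{-1/2}$, and the dimension bounds via the comparison of $\crit'$ at $\widehat m$ and at a well-chosen $m_0$ under (\textbf{Ap}$_u$)/(\textbf{Ap}). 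Apart from inessential slips in bookkeeping (the paper uses the two distinct margin exponents $1-1/p$ for $v_m$ and $1-(r+1)/p$ for $w_m$ rather than a single one, and the union-bound exponent is $5+\alpha_{\mathcal M}$), this is the same proof.
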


We obtain in Theorem \ref{theorem_opt_pen_MLE_gene} oracle inequalities and
dimension bounds for the oracle and selected models.

In order to avoid cumbersome notations in the proofs of Theorem \ref%
{theorem_opt_pen_MLE_gene}, when generic constants $L$ and $n_{0}$ depend on
constants defined in the sets of assumptions (\textbf{SA}$_{0}$) or (\textbf{%
SA}), we will note $L_{\text{(\textbf{SA})}}$ and $n_{0}\left( \text{(%
\textbf{SA})}\right) $. The values of the these constants may change from
line to line, or even whithin one line.

\noindent \textbf{Proof of Theorem \ref{theorem_opt_pen_MLE}}

\begin{itemize}
\item \underline{Proof of oracle Inequality (\ref{oracle_gene_1}):}
\end{itemize}

From the definition of the selected model $\widehat{m}$ given in (\ref%
{def_proc_2_MLE}), $\widehat{m}$ minimizes 
\begin{equation*}
\crit\left( m\right) :=P_{n}\left( \gamma (\hat{f}_{m})\right) +\pen\left(
m\right) \text{ ,}  
\end{equation*}%
over the models $m\in \mathcal{M}_{n}$. Hence, $\widehat{m}$ also minimizes 
\begin{equation}
\crit^{\prime }\left( m\right) :=\crit\left( m\right) -P_{n}\left( \gamma
\left( f_{\ast }\right) \right)  \label{def_criterion_prime_1}
\end{equation}%
over the collection $\mathcal{M}_{n}$. Let us write 
\begin{align*}
\mathcal{K}\left( f_{\ast },\hat{f}_{m}\right) & =P\left( \gamma (\hat{f}%
_{m})-\gamma \left( f_{\ast }\right) \right) \\
& =P_{n}\left( \gamma (\hat{f}_{m})\right) +P_{n}\left( \gamma
(f_{m})-\gamma (\hat{f}_{m})\right) +\left( P_{n}-P\right) \left( \gamma
\left( f_{\ast }\right) -\gamma (f_{m})\right) \\
& +P\left( \gamma (\hat{f}_{m})-\gamma (f_{m})\right) -P_{n}\left( \gamma
\left( f_{\ast }\right) \right) \text{ }.
\end{align*}%
By setting 
\begin{equation*}
p_{1}\left( m\right) =P\left( \gamma (\hat{f}_{m})-\gamma (f_{m})\right) =%
\mathcal{K}\left( f_{m},\hat{f}_{m}\right) \text{ ,}
\end{equation*}%
\begin{equation*}
p_{2}\left( m\right) =P_{n}\left( \gamma (f_{m})-\gamma (\hat{f}_{m})\right)
=\mathcal{K}\left( \hat{f}_{m},f_{m}\right) \text{ ,}
\end{equation*}%
\begin{equation*}
\bar{\delta}\left( m\right) =\left( P_{n}-P\right) \left( \gamma \left(
f_{\ast }\right) -\gamma (f_{m})\right) =\left( P_{n}-P\right) \left( \ln
\left( \left. f_{m}\right/ f_{\ast }\right) \right)
\end{equation*}%
and 
\begin{equation*}
\pen_{\mathrm{id}}^{\prime }\left( m\right) =p_{1}\left( m\right)
+p_{2}\left( m\right) +\bar{\delta}\left( m\right) \text{ ,}
\end{equation*}%
we have 
\begin{equation*}
\mathcal{K}\left( f_{\ast },\hat{f}_{m}\right) =P_{n}\left( \gamma (\hat{f}%
_{m})\right) +p_{1}\left( m\right) +p_{2}\left( m\right) +\bar{\delta}\left(
m\right) -P_{n}\left( \gamma \left( f_{\ast }\right) \right)
\end{equation*}%
and by (\ref{def_criterion_prime_1}), 
\begin{equation}
\crit^{\prime }\left( m\right) =\mathcal{K}\left( f_{\ast },\hat{f}%
_{m}\right) +\left( \pen\left( m\right) -\pen_{\mathrm{id}}^{\prime }\left(
m\right) \right) \text{ .}  \label{formule_crit_prime}
\end{equation}%
As $\widehat{m}$ minimizes $\crit^{\prime }$ over $\mathcal{M}_{n}$, it is
therefore sufficient by (\ref{formule_crit_prime}) to control $\pen\left(
m\right) -\pen_{\mathrm{id}}^{\prime }\left( m\right) $ in terms of the
excess risk $\mathcal{K}\left( f_{\ast },\hat{f}_{m}\right) $, for every $%
m\in \mathcal{M}_{n}$, in order to derive oracle inequalities. We further set%
\begin{equation*}
\mathcal{K}_{m}=\mathcal{K}\left( f_{\ast },f_{m}\right) \text{ \ \ , \ \ }%
v_{m}=P\left[ \left( \frac{f_{m}}{f_{\ast }}\vee 1\right) \left( \ln \left( 
\frac{f_{m}}{f_{\ast }}\right) \right) ^{2}\right] \text{ \ \ and \ \ }%
w_{m}=P\left[ \left( \frac{f_{\ast }}{f_{m}}\vee 1\right) ^{r}\left( \ln
\left( \frac{f_{m}}{f_{\ast }}\right) \right) ^{2}\right] \text{ .}
\end{equation*}%
Let $\Omega _{n}$ be the event on which:

\begin{itemize}
\item For all models $m\in \mathcal{M}_{n}$, we set $z_{n}=\left( 2+\alpha _{%
\mathcal{M}}\right) \ln (n+1)+2\ln 2$ and we have,%
\begin{align}
\bar{\delta}\left( m\right) & \leq \sqrt{\frac{2v_{m}z_{n}}{n}}+\frac{2z_{n}%
}{n}  \label{line_3_bb} \\
-\bar{\delta}\left( m\right) & \leq \sqrt{\frac{2w_{m}z_{n}}{n}}+\frac{2z_{n}%
}{nr}  \label{line_4_bis}
\end{align}%
\begin{align}
-L_{\text{(\textbf{SA})},\alpha }\varepsilon _{n}^{-}\left( m\right) \frac{%
D_{m}}{2n}& \leq p_{1}\left( m\right) -\frac{D_{m}}{2n}\leq L_{\text{(%
\textbf{SA})},\alpha }\varepsilon _{n}^{+}\left( m\right) \frac{D_{m}}{2n}
\label{line_1_bb} \\
-L_{\text{(\textbf{SA})},\alpha }\varepsilon _{n}^{-}\left( m\right) \frac{%
D_{m}}{2n}& \leq p_{2}\left( m\right) -\frac{D_{m}}{2n}\leq L_{\text{(%
\textbf{SA})},\alpha }\varepsilon _{n}^{+}\left( m\right) \frac{D_{m}}{2n}
\label{line_2_bb}
\end{align}
\end{itemize}

\noindent By Theorem \ref{opt_bounds} applied with $\alpha =5+\alpha _{%
\mathcal{M}}$ and Propositions \ref{delta_bar_dev_droite_MLE} and \ref%
{delta_bar_dev_gauche_MLE} applied with $z=z_{n}$, we get 
\begin{eqnarray*}
\mathbb{P}\left( \Omega _{n}\right) &\geq &1-\sum_{m\in \mathcal{M}_{n}}%
\left[ 4(n+1)^{-5-\alpha _{\mathcal{M}}}+\frac{(n+1)^{-2-\alpha _{\mathcal{M}%
}}}{2}\right] \\
&=&1-\sum_{m\in \mathcal{M}_{n}}(n+1)^{-2-\alpha
_{\mathcal{M}}}\geq 1-(n+1)^{-2}\text{ }.
\end{eqnarray*}%
The following simple remark will be used along the proof: for any $m\in 
\mathcal{M}_{n}$, $z_{n}/n\leq L_{\text{(\textbf{SA})}}\varepsilon
_{n}^{+}\left( m\right) \frac{D_{m}}{n}$. Notice also that $\varepsilon
_{n}^{-}\left( \widehat{m}\right) \leq \varepsilon _{n}^{+}\left( \widehat{m}%
\right) $ (see Theorem \ref{opt_bounds}).

By using (\ref{margin_like_soft}), (\ref{formule_crit_prime}), (\ref%
{line_3_bb}), (\ref{line_1_bb}) and (\ref{line_2_bb}), we get that on $%
\Omega _{n}$, for $\Delta $ of the form $\left( \theta -1\right) _{-}L_{%
\text{(\textbf{SA})}}^{(1)}+L_{\text{(\textbf{SA})}}^{(2)}$ with $L_{\text{(%
\textbf{SA})}}^{(1)}$ and $L_{\text{(\textbf{SA})}}^{(2)}$ sufficiently
large,

\begin{align}
& \crit^{\prime }\left( \widehat{m}\right)  \notag \\
\geq & \mathcal{K}\left( f_{\ast },\hat{f}_{\widehat{m}}\right) +%
\pen%
\left( \widehat{m}\right) -p_{1}\left( \widehat{m}\right) -p_{2}\left( 
\widehat{m}\right) -\sqrt{\frac{2v_{\widehat{m}}z_{n}}{n}}-\frac{2z_{n}}{n} 
\notag \\
\geq & \mathcal{K}\left( f_{\ast },\hat{f}_{\widehat{m}}\right) +\left(
\theta -1\right) \frac{D_{\widehat{m}}-1}{n}+\left( \Delta -L_{\text{(%
\textbf{SA})}}\right) \varepsilon _{n}^{+}\left( \widehat{m}\right) \frac{D_{%
\widehat{m}}}{n}-\sqrt{\frac{2A_{MR,-}\mathcal{K}_{\widehat{m}}^{1-1/p}z_{n}%
}{n}}  \notag \\
\geq & \mathcal{K}\left( f_{\ast },\hat{f}_{\widehat{m}}\right) -2\left(
\theta -1\right) _{-}\left( \frac{D_{\widehat{m}}-1}{2n}-L_{\text{(\textbf{SA%
})}}\varepsilon _{n}^{-}\left( \widehat{m}\right) \frac{D_{\widehat{m}}}{n}%
\right)  \notag \\
& +\left( \Delta -\left( L_{\text{(\textbf{SA})}}^{(1)}\left( \theta
-1\right) _{-}+L_{\text{(\textbf{SA})}}^{(2)}\right) \right) \varepsilon
_{n}^{+}\left(\widehat{m}\right) \frac{D_{\widehat{m}}}{n}-\sqrt{\frac{2A_{MR,-}%
\mathcal{K}_{\widehat{m}}^{1-1/p}z_{n}}{n}}  \notag \\
\geq & \left( 1-2\left( \theta -1\right) _{-}\right) \mathcal{K}\left(
f_{\ast },\hat{f}_{\widehat{m}}\right) -\sqrt{\frac{2A_{MR,-}\mathcal{K}_{%
\widehat{m}}^{1-1/p}z_{n}}{n}}\text{ .}  \label{lower_crit_M_hat}
\end{align}%
Note that $1-2\left( \theta -1\right) _{-}>0$. Let us take $\eta \in \left(
0,1/2-\left( \theta -1\right) _{-}\right) $, so that 
\begin{equation*}
1-2\left( \theta -1\right) _{-}-\eta >1/2-\left( \theta -1\right) _{-}>0%
\text{ .}
\end{equation*}%
By Lemma \ref{prop_ineq_tech} applied with $a=\left( \eta \mathcal{K}_{%
\widehat{m}}\right) ^{\frac{p-1}{2p}}$, $b=\eta ^{-\frac{p-1}{2p}}\sqrt{%
2A_{MR,-}z_{n}/n}$, $u=\frac{2p}{p-1}$ and $v=\frac{2p}{p+1}$, we have 
\begin{equation*}
\sqrt{\frac{2A_{MR,-}\mathcal{K}_{\widehat{m}}^{1-1/p}z_{n}}{n}}\leq \eta 
\mathcal{K}_{\widehat{m}}+L_{\text{(\textbf{SA})},\alpha }\left( \frac{\ln
(n+1)}{n}\right) ^{\frac{p}{p+1}}\left( \frac{1}{\eta }\right) ^{\frac{p-1}{%
p+1}}\text{ .}
\end{equation*}%
By using the latter inequality in (\ref{lower_crit_M_hat}) we obtain,%
\begin{equation}
\crit^{\prime }\left( \widehat{m}\right) \geq \left( 1-2\left( \theta
-1\right) _{-}-\eta \right) \mathcal{K}\left( f_{\ast },\hat{f}_{\widehat{m}%
}\right) -L_{\text{(\textbf{SA})},\alpha }\left( \frac{\ln (n+1)}{n}\right)
^{\frac{p}{p+1}}\left( \frac{1}{\eta }\right) ^{\frac{p-1}{p+1}}\text{ .}
\label{lower_crit_M_hat_2}
\end{equation}%
We compute now an upper bound on $\crit^{\prime }$ for each model $m$. By
Lemma \ref{prop_ineq_tech} applied with $a=\left( \eta \mathcal{K}%
_{m}\right) ^{\frac{p-r-1}{2p}}$, $b=\eta ^{-\frac{p-1-r}{2p}}\sqrt{%
2A_{MR,-}z_{n}/n}$, $u=\frac{2p}{p-1-r}$ and $v=\frac{2p}{p+1+r}$, we have 
\begin{equation*}
\sqrt{\frac{2A_{MR,-}\mathcal{K}_{m}^{1-\frac{r+1}{p}}z_{n}}{n}}\leq \eta 
\mathcal{K}_{m}+L_{\text{(\textbf{SA})},r}\left( \frac{\ln (n+1)}{n}\right)
^{\frac{p}{p+1+r}}\left( \frac{1}{\eta }\right) ^{\frac{p-1-r}{p+1+r}}\text{
.}
\end{equation*}%
By (\ref{line_4_bis}), (\ref{formule_crit_prime}), (\ref{line_1_bb}), (\ref%
{line_2_bb}), (\ref{margin_like_gauche_soft}) and by using Lemma \ref%
{prop_ineq_tech} we have on $\Omega _{n}$,%
\begin{align*}
\crit^{\prime }& \left( m\right) =\mathcal{K}\left( f_{\ast },\hat{f}%
_{m}\right) +%
\pen%
\left( m\right) -p_{1}\left( m\right) -p_{2}\left( m\right) -\bar{\delta}%
\left( m\right) \\
\leq & \mathcal{K}\left( f_{\ast },\hat{f}_{m}\right) +2\left( \theta
-1\right) _{+}\left( \frac{D_{m}}{2n}-L_{\text{(\textbf{SA})}}\varepsilon
_{n}^{-}\left( m\right) \frac{D_{m}}{n}\right) \\
& +\left( \Delta +\left( \theta -1\right) _{+}L_{\text{(\textbf{SA})}%
}^{(1)}+L_{\text{(\textbf{SA})}}^{(2)}\right) \varepsilon _{n}^{+}\left(
m\right) \frac{D_{m}}{n}+\sqrt{\frac{2A_{MR,-}\mathcal{K}_{m}^{1-\frac{r+1}{p%
}}z_{n}}{n}}+\frac{2z_{n}}{nr} \\
\leq & \left( 1+2\left( \theta -1\right) _{+}+\eta \right) \mathcal{K}\left(
f_{\ast },\hat{f}_{m}\right) +\left( \Delta +\left( \theta -1\right) _{+}L_{%
\text{(\textbf{SA})}}^{(1)}+L_{\text{(\textbf{SA})},r}^{(2)}\right)
\varepsilon _{n}^{+}\left( m\right) \frac{D_{m}}{n} \\
& +L_{\text{(\textbf{SA})},r}\left( \frac{\ln (n+1)}{n}\right) ^{\frac{p}{%
p+1+r}}\left( \frac{1}{\eta }\right) ^{\frac{p-1-r}{p+1+r}}\text{ .}
\end{align*}%
Recall that we took $\Delta =L_{\text{(\textbf{SA})}}^{(1)}\left( \theta
-1\right) _{-}+L_{\text{(\textbf{SA})}}^{(2)}$ for some positive constants
sufficiently large, so%
\begin{equation*}
\Delta +\left( \theta -1\right) _{+}L_{\text{(\textbf{SA})}}^{(1)}+L_{\text{(%
\textbf{SA})},r}^{(2)}\leq \left\vert \theta -1\right\vert L_{\text{(\textbf{%
SA})}}^{(1)}+L_{\text{(\textbf{SA})},r}^{(2)}
\end{equation*}%
and we finally get,%
\begin{align}
\crit^{\prime }\left( m\right) \leq & \left( 1+2\left( \theta -1\right)
_{+}+\eta \right) \mathcal{K}\left( f_{\ast },\hat{f}_{m}\right) +\left(
\left\vert \theta -1\right\vert L_{\text{(\textbf{SA})}}^{(1)}+L_{\text{(%
\textbf{SA})},r}^{(2)}\right) \varepsilon _{n}^{+}\left( m\right) \frac{D_{m}%
}{n}  \label{upper_crit_prime_1} \\
& +L_{\text{(\textbf{SA})},r}\left( \frac{\ln (n+1)}{n}\right) ^{\frac{p}{%
p+1+r}}\left( \frac{1}{\eta }\right) ^{\frac{p-1-r}{p+1+r}}\text{ .}  \notag
\end{align}%
Now, as $\widehat{m}$ minimizes $%
\crit%
^{\prime }$ we get from (\ref{lower_crit_M_hat_2}) and (\ref%
{upper_crit_prime_1}), on $\Omega _{n}$,%
\begin{gather}
\mathcal{K}\left( f_{\ast },\hat{f}_{\widehat{m}}\right) \leq \frac{%
1+2\left( \theta -1\right) _{+}+\eta }{1-2\left( \theta -1\right) _{-}-\eta }%
\left( \mathcal{K}\left( f_{\ast },\hat{f}_{m_{\ast }}\right) +L_{\text{(%
\textbf{SA})},\theta ,r}\varepsilon _{n}^{+}\left( m_{\ast }\right) \frac{%
D_{m_{\ast }}}{n}\right)  \label{oracle_gene_0} \\
+L_{\text{(\textbf{SA})},\theta ,r}\left( \frac{\ln (n+1)}{n}\right) ^{\frac{%
p}{p+1+r}}\left( \frac{1}{\eta }\right) ^{\frac{p-1-r}{p+1+r}}\text{ .} 
\notag
\end{gather}%
We distinguish two cases. If $D_{m_{\ast }}\geq L_{\text{(\textbf{SA})}%
}\left( \ln (n+1)\right) ^{2}$ with a constant $L_{\text{(\textbf{SA})}}$
chosen such that 
\begin{equation*}
A_{0}\varepsilon _{n}^{+}\left( m_{\ast }\right) \leq \frac{1}{2\sqrt{\ln(n+1)}} \leq \frac{1}{2\sqrt{\ln
2}} < 1 \text{ ,}
\end{equation*}%
where $A_{0}$ and $\varepsilon _{n}^{+}\left( m\right) $ are defined in
Theorem \ref{opt_bounds}, then by Theorem \ref{opt_bounds} it holds on $%
\Omega _{n}$,%
\begin{equation*}
\mathcal{K}\left( f_{\ast },\hat{f}_{m_{\ast }}\right) \geq \left( 1-\frac{1}{2\sqrt{\ln(n+1)}}\right) \frac{D_{m_{\ast }}}{2n}\text{ .}
\end{equation*}%
On the other hand, if $D_{m_{\ast }}\leq L_{\text{(\textbf{SA})}}\left( \ln
(n+1)\right) ^{2}$ then by Theorem \ref{opt_bounds} we have on $\Omega _{n}$,%
\begin{equation*}
\mathcal{K}\left( f_{\ast },\hat{f}_{m_{\ast }}\right) +L_{\text{(\textbf{SA}%
)},\theta ,r}\varepsilon _{n}^{+}\left( m_{\ast }\right) \frac{D_{m_{\ast }}%
}{n}\leq L_{\text{(\textbf{SA})},\theta ,r}\frac{\left( \ln (n+1)\right) ^{3}%
}{n}\text{ .}
\end{equation*}%
Hence, in any case we always have on $\Omega _{n}$,%
\begin{align*}
& \mathcal{K}\left( f_{\ast },\hat{f}_{m_{\ast }}\right) +L_{\text{(\textbf{%
SA})},\theta ,r}\varepsilon _{n}^{+}\left( m_{\ast }\right) \frac{D_{m_{\ast
}}}{n}  \notag \\
\leq & L_{\text{(\textbf{SA})},\theta ,r}\frac{\left( \ln (n+1)\right) ^{3}}{%
n}+\left( 1+L_{\text{(\textbf{SA})},\theta ,r}\left( \ln (n+1)\right)
^{-1/2}\right) \mathcal{K}\left( f_{\ast },\hat{f}_{m_{\ast }}\right)\text{ .}
\end{align*}%
By taking $\eta=\left( \ln (n+1)\right) ^{-1/2}\left( 1/2-\left(\theta -1\right)_{-}\right) $ and using the fact that in this case,
\begin{equation*}
\frac{%
1+2\left( \theta -1\right) _{+}+\eta }{1-2\left( \theta -1\right) _{-}-\eta } \leq \frac{%
1+2\left( \theta -1\right) _{+}+L_{%
\theta}\eta }{1-2\left( \theta -1\right) _{-} }\text{ ,}
\end{equation*}
we deduce that Inequality (\ref{oracle_gene_0}) gives, %
\begin{equation}
\mathcal{K}\left( f_{\ast },\hat{f}_{\widehat{m}}\right) \leq \frac{%
1+2\left( \theta -1\right) _{+}+L_{\text{(\textbf{SA})},\theta ,r}\left( \ln
(n+1)\right) ^{-1/2}}{1-2\left( \theta -1\right) _{-}}\inf_{m\in \mathcal{M}%
_{n}}\left\{ \mathcal{K}\left( f_{\ast },\hat{f}_{m}\right) \right\} +L_{%
\text{(\textbf{SA})},\theta ,r}\frac{\left( \ln (n+1)\right) ^{\frac{3p-1-r}{%
2\left( p+1+r\right) }}}{n^{\frac{p}{p+1+r}}}  \notag
\end{equation}%
which is Inequality (\ref{oracle_gene_1}).

\begin{itemize}
\item \underline{Proof of Inequality (\ref{bound_unfavorable}):}
\end{itemize}

By Lemma \ref{lemma_oracle_model} we know that $D_{m_{\ast }}\leq L_{\text{(%
\textbf{SA})}}n^{\frac{1}{1+\beta _{+}}}$ on $\Omega _{n}$. Furthermore, we
have on $\Omega _{n}$, by simple computations,%
\begin{align*}
\mathcal{K}\left( f_{\ast },\hat{f}_{m}\right) =& \mathcal{K}\left( f_{\ast
},f_{m}\right) +\mathcal{K}\left( f_{m},\hat{f}_{m}\right) \\
\leq & C_{+}D_{m}^{-\beta _{+}}+\left( 1+L_{\text{(\textbf{SA})}}\varepsilon
_{n}^{+}\left( m\right) \right) \frac{D_{m}}{2n} \\
\leq & L_{\text{(\textbf{SA})}}\left( D_{m}^{-\beta _{+}}+\frac{D_{m}}{n}+%
\frac{\ln (n+1)}{n}\right) \text{ .}
\end{align*}%
This yields%
\begin{align*}
\inf_{m\in \mathcal{M}_{n}}\left\{ \mathcal{K}\left( f_{\ast },\hat{f}%
_{m}\right) \right\} \leq & L_{\text{(\textbf{SA})}}\inf \left\{
D_{m}^{-\beta _{+}}+\frac{D_{m}}{n}+\frac{\ln (n+1)}{n}\text{ };\text{ }m\in 
\mathcal{M}_{n},\text{ }D_{m}\leq L_{\text{(\textbf{SA})}}n^{\frac{1}{%
1+\beta _{+}}}\right\} \\
\leq & L_{\text{(\textbf{SA})}}n^{-\frac{\beta _{+}}{1+\beta _{+}}}\text{ .}
\end{align*}%
To conclude, it suffices to notice that if $\beta _{+}>p$ then%
\begin{equation*}
n^{-\frac{\beta _{+}}{1+\beta _{+}}}\leq n^{-\frac{p}{p+1}}\leq n^{-\frac{p}{%
p+1+r}}\text{ ,}
\end{equation*}%
which finally gives%
\begin{equation*}
\frac{1+2\left( \theta -1\right) _{+}+L_{\text{(\textbf{SA})},\theta
,r}\left( \ln (n+1)\right) ^{-1/2}}{1-2\left( \theta -1\right) _{-}}%
\inf_{M\in \mathcal{M}_{n}}\left\{ \mathcal{K}\left( f_{\ast },\hat{f}%
_{m}\right) \right\} \leq L_{\text{(\textbf{SA})},\theta ,r}\frac{\left( \ln
(n+1)\right) ^{\frac{3p-1-r}{2\left( p+1+r\right) }}}{n^{\frac{p}{p+1+r}}}%
\text{ .}
\end{equation*}

\begin{itemize}
\item \underline{Proof of Inequality (\ref{oracle_um_ap}):}
\end{itemize}

From \textbf{(Ap), }we know by Lemma \ref{lemma_oracle_model} that there
exist $L_{\text{(\textbf{SA})}}^{(1)},L_{\text{(\textbf{SA})}}^{(2)}>0$ such
that 
\begin{equation*}
L_{\text{(\textbf{SA})}}^{(1)}n^{\frac{\beta _{+}}{\left( 1+\beta
_{+}\right) \beta _{-}}}\leq D_{M_{\ast }}\leq L_{\text{(\textbf{SA})}%
}^{(2)}n^{\frac{1}{1+\beta _{+}}}
\end{equation*}%
and so%
\begin{align}
\varepsilon _{n}^{+}\left( m_{\ast }\right) \leq & \max \left\{ \sqrt{\frac{%
D_{m_{\ast }}\ln (n+1)}{n}};\sqrt{\frac{\ln (n+1)}{D_{m_{\ast }}}};\frac{\ln
(n+1)}{D_{m_{\ast }}}\right\}  \notag \\
\leq & L_{\text{(\textbf{SA})}}\max \left\{ n^{-\frac{\beta _{+}}{2\left(
1+\beta _{+}\right) }};n^{-\frac{\beta _{+}}{2\left( 1+\beta _{+}\right)
\beta _{-}}}\right\} \sqrt{\ln (n+1)}  \notag \\
=& L_{\text{(\textbf{SA})}}n^{-\frac{\beta _{+}}{\left( 1+\beta _{+}\right)
\beta _{-}}}\sqrt{\ln (n+1)}\text{ .}  \label{upper_bound_epsilon}
\end{align}%
Assume for now that we also have 
\begin{equation}
A_{0}\varepsilon _{n}^{+}\left( m_{\ast }\right) \leq \left( \ln
(n+1)\right) ^{-1/2}\text{ ,}  \label{borne_epsilon_n_0}
\end{equation}%
where $A_{0}$ and $\varepsilon _{n}^{+}\left( m\right) $ are defined in
Theorem \ref{opt_bounds}. Then by Theorem \ref{opt_bounds} it holds on $%
\Omega _{n}$,%
\begin{equation*}
\mathcal{K}\left( f_{\ast },\hat{f}_{m_{\ast }}\right) \geq \left( 1-\left(
\ln (n+1)\right) ^{-1/2}\right) \frac{D_{m_{\ast }}}{2n}\text{ .}
\end{equation*}%
In this case, we deduce that%
\begin{align}
& \mathcal{K}\left( f_{\ast },\hat{f}_{m_{\ast }}\right) +L_{\text{(\textbf{%
SA})},\theta ,r}\varepsilon _{n}^{+}\left( m_{\ast }\right) \frac{D_{m_{\ast
}}}{n}  \notag \\
\leq & \left( 1+L_{\text{(\textbf{SA})},\theta ,r}n^{-\frac{\beta _{+}}{%
\left( 1+\beta _{+}\right) \beta _{-}}}\sqrt{\ln (n+1)}\right) \mathcal{K}%
\left( f_{\ast },\hat{f}_{m_{\ast }}\right) \text{ ,}
\label{borne_sup_K_plus_dev}
\end{align}%
and Inequality (\ref{oracle_um_ap}) simply follows from using Inequality (%
\ref{oracle_gene_0}).

If Inequality (\ref{borne_epsilon_n_0}) is not satisfied, that is%
\begin{equation}
A_{0}\varepsilon _{n}^{+}\left( m_{\ast }\right) >\left( \ln (n+1)\right)
^{-1/2}\text{ ,}  \label{negation_epsilon}
\end{equation}%
then by (\ref{upper_bound_epsilon}), this means that there exists a positive
constant $L_{\text{(\textbf{SA})}}$ such that%
\begin{equation*}
L_{\text{(\textbf{SA})}}n^{-\frac{\beta _{+}}{\left( 1+\beta _{+}\right)
\beta _{-}}}\sqrt{\ln (n+1)}>\left( \ln (n+1)\right) ^{-1/2}\text{ .}
\end{equation*}%
Consequently, this ensures that in the case where (\ref{negation_epsilon})
is true, we also have $n\leq n_{0}\left( \text{(\textbf{SA})}\right) $.
Hence, as 
\begin{equation*}
\mathcal{K}\left( f_{\ast },\hat{f}_{m_{\ast }}\right) \geq C_{-}D_{m_{\ast
}}^{-\beta _{-}}\geq L_{\text{(\textbf{SA})}}n^{\frac{\beta _{+}}{\left(
1+\beta _{+}\right) \beta _{-}}}>0\text{ ,}
\end{equation*}%
this yields Inequality (\ref{borne_sup_K_plus_dev}) with a positive constant 
$L_{\text{(\textbf{SA})},\theta ,r}$ in the right-hand term sufficiently
large and then the result easily follows from using Inequality (\ref%
{oracle_gene_0}).

\begin{itemize}
\item \underline{Proof of Inequality (\ref{oracle_um_ap_opt}):}
\end{itemize}

If $D_{m_{\ast }}\geq L_{\text{(\textbf{SA})}}\left( \ln (n+1)\right) ^{2}$
with a constant $L_{\text{(\textbf{SA})}}$ chosen such that 
\begin{equation*}
A_{0}\varepsilon _{n}^{+}\left( m_{\ast }\right) \leq 1/2\text{ ,}
\end{equation*}%
where $A_{0}$ and $\varepsilon _{n}^{+}\left( m\right) $ are defined in
Theorem \ref{opt_bounds}, then by Theorem \ref{opt_bounds} it holds on $%
\Omega _{n}$,%
\begin{equation*}
\mathcal{K}\left( f_{\ast },\hat{f}_{m_{\ast }}\right) \geq C_{-}D_{m_{\ast
}}^{-\beta _{-}}+\frac{D_{m_{\ast }}}{4n}\text{ .}
\end{equation*}%
By Lemma \ref{lemma_oracle_model} we know that $L_{\text{(\textbf{SA})}%
}^{(1)}n^{\frac{\beta _{+}}{\left( 1+\beta _{+}\right) \beta _{-}}}\leq
D_{m_{\ast }}\leq L_{\text{(\textbf{SA})}}^{(2)}n^{\frac{1}{1+\beta _{+}}}$
on $\Omega _{n}$. This gives%
\begin{equation*}
\mathcal{K}\left( f_{\ast },\hat{f}_{m_{\ast }}\right) \geq L_{\text{(%
\textbf{SA})}}n^{-\frac{\beta _{-}}{1+\beta _{+}}}+L_{\text{(\textbf{SA})}%
}n^{-1+\frac{\beta _{+}}{\left( 1+\beta _{+}\right) \beta _{-}}}
\end{equation*}%
and we deduce by simple algebra that if $\beta _{-}<p\left( 1+\beta
_{+}\right) /(1+p+r)$ or $p/(1+r)>\beta _{+}/(\beta _{-}\left( 1+\beta
_{+}\right) )-1$, then%
\begin{equation*}
L_{\text{(\textbf{SA})}}\left( \ln (n+1)\right) ^{-1/2}\mathcal{K}\left(
f_{\ast },\hat{f}_{m_{\ast }}\right) \geq \frac{\left( \ln (n+1)\right) ^{%
\frac{3p-1-r}{2\left( p+1+r\right) }}}{n^{\frac{p}{p+1+r}}}\text{ .}
\end{equation*}%
On the other hand, if $D_{m_{\ast }}\leq L_{\text{(\textbf{SA})}}\left( \ln
(n+1)\right) ^{2}$, this implies in particular%
\begin{equation*}
L_{\text{(\textbf{SA})}}^{(1)}n^{\frac{\beta _{+}}{\left( 1+\beta
_{+}\right) \beta _{-}}}\leq L_{\text{(\textbf{SA})}}\left( \ln (n+1)\right)
^{2}\text{ .}
\end{equation*}%
Hence, there exists an integer $n_{0}($(\textbf{SA})$)$ such that $n\leq
n_{0}($(\textbf{SA})$)$. In this case, we can find a constant $L_{\text{(%
\textbf{SA})}}$ such that%
\begin{equation*}
L_{\text{(\textbf{SA})}}\left( \ln (n+1)\right) ^{-1/2}\mathcal{K}\left(
f_{\ast },\hat{f}_{m_{\ast }}\right) \geq \frac{\left( \ln (n+1)\right) ^{%
\frac{3p-1-r}{2\left( p+1+r\right) }}}{n^{\frac{p}{p+1+r}}}
\end{equation*}%
and through the use of inequality (\ref{oracle_um_ap}), this conclude the
proof of Inequality (\ref{oracle_um_ap_opt}).

\begin{lemma}[Control on the dimension of the selected model]
\label{lemma_selected_model}Assume that (\textbf{SA}$_{0}$) holds together
with (\textbf{Ap}$_{u}$). If $\beta _{+}\leq \frac{p}{r+1}$ then, on the
event $\Omega _{n}$ defined in the proof of Theorem \ref%
{theorem_opt_pen_MLE_gene}, we have%
\begin{equation}
D_{\widehat{m}}\leq L_{\Delta ,\theta ,r,\text{(\textbf{SA})}}n^{^{\frac{1}{%
2+\beta _{+}\left( 1-\frac{r+1}{p}\right) }}}\sqrt{\ln (n+1)}\text{ .}
\label{control_dim_hat_gene}
\end{equation}%
If moreover (\textbf{Ap}) holds, then we get on the event $\Omega _{n}$, 
\begin{equation}
L_{\Delta ,\theta ,\text{(\textbf{SA})}}^{(1)}\frac{n^{\frac{\beta _{+}}{%
\beta _{-}\left( 1+\beta _{+}\right) }}}{\left( \ln (n+1)\right) ^{\frac{1}{%
\beta _{-}}}}\leq D_{\widehat{m}}\leq L_{\Delta ,\theta ,\text{(\textbf{SA})}%
}^{(2)}n^{^{\frac{1}{2+\beta _{+}\left( 1-\frac{r+1}{p}\right) }}}\sqrt{\ln
(n+1)}\text{ .}  \label{control_dim_hat_Ap}
\end{equation}
\end{lemma}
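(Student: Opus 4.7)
The plan is to extract both dimension bounds from the master inequality $\crit'(\widehat{m}) \leq \crit'(m)$ (for suitably chosen $m \in \mathcal{M}_n$) that lies at the core of the proof of Theorem~\ref{theorem_opt_pen_MLE_gene}, working on the event $\Omega_n$ so that the concentration statements (\ref{line_1_bb})--(\ref{line_4_bis}) are at our disposal.

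For the upper half of (\ref{control_dim_hat_gene}), I would rewrite
\[
\crit'(m) = \mathcal{K}_m + \pen(m) - p_2(m) - \bar{\delta}(m)
\]
(obtained after cancelling $p_1(m)$ via $\mathcal{K}(f_*,\hat{f}_m) = \mathcal{K}_m + p_1(m)$), and derive a lower bound
\[
\crit'(\widehat{m}) \geq \mathcal{K}_{\widehat{m}} + \bigl(\theta - \tfrac{1}{2}\bigr)\tfrac{D_{\widehat{m}}}{n} + (\Delta - L)\varepsilon_n^+(\widehat{m})\tfrac{D_{\widehat{m}}}{n} - \sqrt{\tfrac{2v_{\widehat{m}} z_n}{n}} - \tfrac{2z_n}{n},
\]
whose linear coefficient in $D_{\widehat{m}}/n$ is positive thanks to $\theta > 1/2$; the variance $v_{\widehat{m}}$ is controlled by Proposition~\ref{lemma_margin_like_bounded_below} and (\textbf{Ap}$_u$) as $v_{\widehat{m}} \leq L D_{\widehat{m}}^{-\beta_+(p-1)/p}$. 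On the other side, I would choose a test model $m_0 \in \mathcal{M}_n$ of dimension $D_{m_0} \sim n^{1/(2+\beta_+(1-(r+1)/p))}\sqrt{\ln(n+1)}$, at which the variance $D_{m_0}/n$ balances the noise term $\sqrt{w_{r,m_0} z_n/n}$, where by (\ref{margin_like_gauche_soft}) combined with (\textbf{Ap}$_u$) one has $w_{r,m_0} \leq L D_{m_0}^{-\beta_+(1-(r+1)/p)}$; this yields $\crit'(m_0) \leq L n^{-(1+\beta_+(1-(r+1)/p))/(2+\beta_+(1-(r+1)/p))}\sqrt{\ln(n+1)}$ up to lower-order contributions that are controlled precisely by the assumption $\beta_+ \leq p/(r+1)$. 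Combining the two sides and resolving algebraically the resulting self-bounded inequality in $D_{\widehat{m}}$ yields (\ref{control_dim_hat_gene}).

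For the lower half of (\ref{control_dim_hat_Ap}) under the additional hypothesis (\textbf{Ap}), I would combine the Pythagorean bound $\mathcal{K}(f_*,\hat{f}_{\widehat{m}}) \geq \mathcal{K}_{\widehat{m}} \geq C_- D_{\widehat{m}}^{-\beta_-}$ with the oracle inequality (\ref{oracle_um_ap}) and the trivial bias--variance upper bound $\inf_m \mathcal{K}(f_*,\hat{f}_m) \leq L n^{-\beta_+/(1+\beta_+)}$ (immediate from (\textbf{Ap}$_u$) by plugging a test dimension $\sim n^{1/(1+\beta_+)}$ into Theorem~\ref{opt_bounds}). Under $\beta_+ \leq p/(r+1)$ the residual $(\ln(n+1)/n)^{p/(p+1+r)}$ in (\ref{oracle_um_ap}) is lower order, so $\mathcal{K}(f_*,\hat{f}_{\widehat{m}}) \leq L n^{-\beta_+/(1+\beta_+)}$ up to logarithmic slack from the oracle constant; inverting $x \mapsto x^{-\beta_-}$ produces the desired $D_{\widehat{m}} \geq L n^{\beta_+/(\beta_-(1+\beta_+))}/(\ln(n+1))^{1/\beta_-}$.

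The delicate point is the selection of the test model $m_0$ for the upper bound: the exponent $1/(2+\beta_+(1-(r+1)/p))$ appears exactly as the dimension at which the variance $D_{m_0}/n$ balances the $w_r$-type noise arising from the $-\bar{\delta}(m_0)$ control via the margin-like inequality (\ref{margin_like_gauche_soft}), so the bound $\beta_+ \leq p/(r+1)$ is precisely what makes the bias $C_+ D_{m_0}^{-\beta_+}$ at this scale negligible against the remaining terms. The assumption $\theta > 1/2$ is essential on the lower-bound side to preserve positivity of the $D_{\widehat{m}}/n$ coefficient; without it one would have to retain $\mathcal{K}(f_*,\hat{f}_{\widehat{m}})$ on the left and the self-bounded inequality would no longer admit such a clean closed-form resolution.
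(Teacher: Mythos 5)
For the lower half of~(\ref{control_dim_hat_Ap}) you take a genuinely different route: you chain the oracle inequality~(\ref{oracle_um_ap}) with the Pythagorean identity and the bias lower bound $\mathcal{K}_{\widehat{m}}\geq C_-D_{\widehat{m}}^{-\beta_-}$, whereas the paper argues directly in the criterion $\crit'$, showing that for $D_m$ small enough the noise term is controlled so that $\crit'(m)\geq\mathcal{K}_m/2\geq C_-D_m^{-\beta_-}/2$, and then comparing to $\crit'(m_0)$. Both are valid (the oracle inequality~(\ref{oracle_um_ap}) is established using only Lemma~\ref{lemma_oracle_model}, not Lemma~\ref{lemma_selected_model}, so there is no circularity); yours is shorter because it reuses a bound already proved, the paper's is self-contained.

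The upper bound~(\ref{control_dim_hat_gene}) is where a genuine gap sits, and it is exactly at the step you flag as delicate. Set $a:=1-(r+1)/p\in(0,1)$. At your reference scale $D_{m_0}\asymp n^{1/(2+\beta_+ a)}\sqrt{\ln(n+1)}$ the variance $D_{m_0}/n$ and the $w_r$-noise $\sqrt{w_{m_0}z_n/n}\lesssim\sqrt{D_{m_0}^{-\beta_+ a}\ln(n+1)/n}$ are indeed both $\asymp n^{-(1+\beta_+ a)/(2+\beta_+ a)}$ up to $\sqrt{\ln(n+1)}$, but the bias $C_+D_{m_0}^{-\beta_+}\asymp n^{-\beta_+/(2+\beta_+ a)}$ is not negligible under the lemma's hypothesis. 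Since
\begin{equation*}
\beta_+\leq 1+\beta_+ a \;\Longleftrightarrow\; \beta_+(1-a)\leq 1 \;\Longleftrightarrow\; \beta_+\leq\frac{p}{r+1}\text{ ,}
\end{equation*}
the hypothesis $\beta_+\leq p/(r+1)$ yields $\beta_+/(2+\beta_+ a)\leq(1+\beta_+ a)/(2+\beta_+ a)$: the bias \emph{dominates} the variance and the noise at your chosen scale, the opposite of what you asserted. Your claimed bound $\crit'(m_0)\lesssim n^{-(1+\beta_+ a)/(2+\beta_+ a)}\sqrt{\ln(n+1)}$ therefore fails; the correct order is $\crit'(m_0)\lesssim n^{-\beta_+/(2+\beta_+ a)}$, and feeding this through $D_{\widehat{m}}\lesssim n\,\crit'(m_0)$ gives only $D_{\widehat{m}}\lesssim n^{(2+\beta_+ a-\beta_+)/(2+\beta_+ a)}$, whose exponent is $\geq 1/(2+\beta_+ a)$ in this regime, so~(\ref{control_dim_hat_gene}) is not reached. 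The variance--noise balancing scale you chose is the appropriate reference only in the complementary regime $\beta_+>p/(r+1)$; in the stated regime $\beta_+\leq p/(r+1)$ the paper's proof instead takes $D_{m_0}=\lceil n^{1/(1+\beta_+)}\rceil$, the bias--variance balancing dimension, at which both bias and variance are $\asymp n^{-\beta_+/(1+\beta_+)}$ and the noise is of lower order precisely because $\beta_+\leq p/(r+1)$.
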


\begin{lemma}[Control over the dimension of oracle models]
\label{lemma_oracle_model}Assume that (\textbf{SA}$_{0}$) holds together
with (\textbf{Ap}$_{u}$). We have on the event $\Omega _{n}$ defined in the
proof of Theorem \ref{theorem_opt_pen_MLE_gene},%
\begin{equation*}
D_{m_{\ast }}\leq L_{\text{(\textbf{SA})}}n^{\frac{1}{1+\beta _{+}}}\text{ .}
\end{equation*}%
If moreover (\textbf{Ap}) holds, then we get on the event $\Omega _{n}$,%
\begin{equation}
L_{\text{(\textbf{SA})}}^{(1)}n^{\frac{\beta _{+}}{\left( 1+\beta
_{+}\right) \beta _{-}}}\leq D_{m_{\ast }}\leq L_{\text{(\textbf{SA})}%
}^{(2)}n^{\frac{1}{1+\beta _{+}}}\text{ .}  \label{control_dim_star_Ap}
\end{equation}
\end{lemma}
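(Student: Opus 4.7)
The plan is to combine the Pythagorean identity \eqref{pythagore_relation_MLE}, the sharp concentration of the estimation error from Theorem~\ref{opt_bounds}, and the approximation hypotheses (\textbf{Ap}$_u$)/(\textbf{Ap}) to sandwich $\mathcal{K}(f_\ast,\hat f_{m_\ast})$ and then read off the dimension of $m_\ast$.

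First, on the event $\Omega_n$ introduced in the proof of Theorem~\ref{theorem_opt_pen_MLE_gene}, the identity
$\mathcal{K}(f_\ast,\hat f_m) = \mathcal{K}(f_\ast,f_m) + \mathcal{K}(f_m,\hat f_m)$
together with Theorem~\ref{opt_bounds} gives, for any $m\in\mathcal{M}_n$ with $D_m$ not too small,
\begin{equation*}
\bigl(1-A_0\varepsilon_n^-(m)\bigr)\frac{D_m}{2n} \;\leq\; \mathcal{K}(f_m,\hat f_m) \;\leq\; \bigl(1+A_0\varepsilon_n^+(m)\bigr)\frac{D_m}{2n},
\end{equation*}
so that, under (\textbf{Ap}$_u$),
\begin{equation*}
c_1\,\frac{D_m}{n}\;\leq\;\mathcal{K}(f_\ast,\hat f_m)\;\leq\; C_+ D_m^{-\beta_+} + c_2\,\frac{D_m}{n}.
\end{equation*}
The right-hand side is minimized, as a function of $D$, near $D^\dagger \sim n^{1/(1+\beta_+)}$, where it is of order $n^{-\beta_+/(1+\beta_+)}$.

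For the upper bound on $D_{m_\ast}$, I use the oracle property: for every $m\in\mathcal{M}_n$,
\begin{equation*}
c_1\,\frac{D_{m_\ast}}{n} \;\leq\; \mathcal{K}(f_\ast,\hat f_{m_\ast}) \;\leq\; \mathcal{K}(f_\ast,\hat f_m) \;\leq\; C_+ D_m^{-\beta_+} + c_2\,\frac{D_m}{n}.
\end{equation*}
Evaluating at a comparison model $m\in\mathcal{M}_n$ whose dimension is of order $n^{1/(1+\beta_+)}$ (such a model is available because of (\textbf{P2}) together with the standard collections considered in the paper; the constants in the lemma absorb the gap to the optimal balanced dimension), the right-hand side is at most $L_{(\textbf{SA})}\, n^{-\beta_+/(1+\beta_+)}$, which gives $D_{m_\ast}\leq L_{(\textbf{SA})}\, n^{1/(1+\beta_+)}$. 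Small-dimension regimes (where the $\varepsilon_n^\pm$ factor degrades) are handled separately: if $D_{m_\ast}$ were very small the bias $\mathcal{K}(f_\ast,f_{m_\ast})$ would still provide the required lower bound on $\mathcal{K}(f_\ast,\hat f_{m_\ast})$.

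For the matching lower bound on $D_{m_\ast}$ under (\textbf{Ap}), I use the other half of the Pythagorean identity: $\mathcal{K}(f_\ast,\hat f_{m_\ast})\geq \mathcal{K}(f_\ast,f_{m_\ast})\geq C_-\, D_{m_\ast}^{-\beta_-}$. Combining with the upper bound $\mathcal{K}(f_\ast,\hat f_{m_\ast})\leq L_{(\textbf{SA})}\, n^{-\beta_+/(1+\beta_+)}$ from the previous step yields $D_{m_\ast}^{\beta_-} \geq L^{-1}_{(\textbf{SA})}\, n^{\beta_+/(1+\beta_+)}$, hence the claimed lower bound $D_{m_\ast}\geq L^{(1)}_{(\textbf{SA})}\, n^{\beta_+/((1+\beta_+)\beta_-)}$. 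The main obstacle is the discretization issue mentioned above: one must ensure that a model balancing bias and variance, i.e.\ with dimension close to $n^{1/(1+\beta_+)}$, is actually available in $\mathcal{M}_n$, so that the oracle comparison produces the sharp $n^{-\beta_+/(1+\beta_+)}$ rate rather than a weaker upper bound.
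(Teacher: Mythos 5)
Your proposal reproduces the paper's overall strategy (compare the oracle to a ``balanced'' model $m_0$ with $D_{m_0}\asymp n^{1/(1+\beta_+)}$, then invoke Theorem~\ref{opt_bounds} and the Pythagorean decomposition $\mathcal{K}(f_\ast,\hat f_m)=\mathcal{K}_m+\mathcal{K}(f_m,\hat f_m)$), but with a genuinely cleaner route for the lower bound on $D_{m_\ast}$. The paper obtains that bound by showing, via the concentration lower bound for $\mathcal{K}(f_m,\hat f_m)$ and a somewhat intricate condition on $D_m$, that $\mathcal{K}(f_\ast,\hat f_m)>\mathcal{K}(f_\ast,\hat f_{m_0})$ whenever $D_m\leq L\,n^{\beta_+/((1+\beta_+)\beta_-)}$; you instead use the deterministic chain $C_-D_{m_\ast}^{-\beta_-}\leq\mathcal{K}(f_\ast,f_{m_\ast})\leq\mathcal{K}(f_\ast,\hat f_{m_\ast})\leq\mathcal{K}(f_\ast,\hat f_{m_0})\leq L\,n^{-\beta_+/(1+\beta_+)}$, avoiding the concentration lower bound entirely. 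That is a real simplification and buys you a shorter, more transparent argument. Where your write-up is weaker is the handling of degenerate regimes for the \emph{upper} bound on $D_{m_\ast}$: you need $c_1 D_{m_\ast}/n\leq\mathcal{K}(f_\ast,\hat f_{m_\ast})$, i.e.\ $1/2-A_0\varepsilon_n^-(m_\ast)\geq c_1>0$, and Theorem~\ref{opt_bounds} only delivers a useful (positive) lower bound on $\mathcal{K}(f_{m_\ast},\hat f_{m_\ast})$ when $D_{m_\ast}$ lies in a window roughly of the form $[L(\ln(n+1))^2,L^{-1}n/\ln(n+1)]$. Your remark that ``the bias would still provide the required lower bound'' is misplaced here: under (\textbf{Ap}$_u$) alone there is no lower bound on the bias, and in any case for \emph{small} $D_{m_\ast}$ the target inequality $D_{m_\ast}\leq L\,n^{1/(1+\beta_+)}$ is trivially true. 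The relevant patch is for \emph{large} $D_{m_\ast}$ combined with small $n$ (where $\varepsilon_n^-$ may not be small): there one simply invokes (\textbf{P2}), $D_{m_\ast}\leq A_{\mathcal{M},+}n/(\ln(n+1))^2$, and absorbs the finite range of ``small $n$'' into the constant $L_{(\textbf{SA})}$, exactly as the paper does in its explicit case analysis at the end. With that correction, the proof is sound and in fact slightly simpler than the paper's.
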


\noindent \textbf{Proof of} \textbf{Lemma \ref{lemma_selected_model}}.
Recall that $\widehat{m}$ minimizes 
\begin{equation*}
\crit^{\prime }\left( m\right) =\crit\left( m\right) -P_{n}\gamma \left(
f_{\ast }\right) =\mathcal{K}_{m}-p_{2}\left( m\right) -\bar{\delta}\left(
m\right) +\pen\left( m\right)  
\end{equation*}%
over the models $m\in \mathcal{M}_{n}.$ Moreover, $\pen\left( m\right)
=\left( \theta +\Delta \varepsilon _{n}^{+}\left( m\right) \right) D_{m}/n$.
The analysis is restricted on $\Omega _{n}$.

\begin{enumerate}
\item Upper bound on $\crit^{\prime }\left( m\right) $: 
\begin{align*}
p_{2}\left( m\right) & \geq \left( \frac{1}{2}-L_{\text{(\textbf{SA})}%
}\varepsilon _{n}^{+}\left( m\right) \right) \frac{D_{m}}{n} \\
-\bar{\delta}\left( m\right) & \leq \sqrt{\frac{2w_{m}z_{n}}{n}}+\frac{2z_{n}%
}{nr}\text{ .}
\end{align*}%
Moreover, by Lemma \ref{lem:margin_like_unbounded}, we have $w_{m}\leq
A_{MR,-}\mathcal{K}_{m}^{1-\frac{r+1}{p}}$ and so,%
\begin{align*}
\crit^{\prime }\left( m\right) \leq & \mathcal{K}_{m}+\left( \theta -\frac{1%
}{2}+L_{\Delta ,\text{(\textbf{SA})}}\varepsilon _{n}^{+}\left( m\right)
\right) \frac{D_{m}}{n}+\sqrt{\frac{2A_{MR,-}\mathcal{K}_{m}^{1-\frac{r+1}{p}%
}z_{n}}{n}} \\
\leq & L_{\Delta ,\theta ,\text{(\textbf{SA})}}\left( D_{m}^{-\beta _{+}}+%
\frac{D_{m}}{n}+\frac{\ln (n+1)}{n}+\sqrt{\frac{D_{m}^{-\beta _{+}\left( 1-%
\frac{r+1}{p}\right) }\ln (n+1)}{n}}\right) \text{ .}
\end{align*}%
Now, if $\beta _{+}\leq \frac{p}{r+1}$, then for $m_{0}$ such that $%
D_{m_{0}}=\left\lceil n^{\frac{1}{1+\beta _{+}}}\right\rceil $ we have%
\begin{equation*}
\frac{D_{m_{0}}}{n}\leq 2n^{-\frac{\beta _{+}}{1+\beta _{+}}}\text{ };\text{ 
}D_{m_{0}}^{-\beta _{+}}\leq n^{-\frac{\beta _{+}}{1+\beta _{+}}}\text{ };%
\text{ }\sqrt{\frac{D_{m_{0}}^{-\beta _{+}\left( 1-\frac{r+1}{p}\right) }}{n}%
}\leq n^{-\frac{\beta _{+}\left( 2-\frac{r+1}{p}\right) +1}{2\left( 1+\beta
_{+}\right) }}\leq n^{-\frac{\beta _{+}}{1+\beta _{+}}}\text{ ,}
\end{equation*}%
so we get%
\begin{equation}
\crit^{\prime }\left( m_{0}\right) \leq L_{\Delta ,\theta ,\text{(\textbf{SA}%
)}}n^{-\frac{\beta _{+}}{1+\beta _{+}}}\sqrt{\ln (n+1)}\text{ .}
\label{upper_crit_M_0}
\end{equation}%
Otherwise, if $\beta _{+}>\frac{p}{r+1}$, then for $m_{1}$ such that $%
D_{m_{1}}=\left\lceil n^{\frac{1}{2+\beta _{+}\left( 1-\frac{r+1}{p}\right) }%
}\right\rceil $, we have%
\begin{align*}
\frac{D_{m_{1}}}{n}& \leq 2n^{-\frac{1+\beta _{+}\left( 1-\frac{r+1}{p}%
\right) }{2+\beta _{+}\left( 1-\frac{r+1}{p}\right) }}\text{ };\text{ }\sqrt{%
\frac{D_{m_{1}}^{-\beta _{+}\left( 1-\frac{r+1}{p}\right) }}{n}}\leq n^{-%
\frac{1+\beta _{+}\left( 1-\frac{r+1}{p}\right) }{2+\beta _{+}\left( 1-\frac{%
r+1}{p}\right) }}\text{ };\text{ } \\
D_{m_{1}}^{-\beta _{+}}& \leq n^{-\frac{\beta _{+}}{2+\beta _{+}\left( 1-%
\frac{r+1}{p}\right) }}\leq n^{-\frac{1+\beta _{+}\left( 1-\frac{r+1}{p}%
\right) }{2+\beta _{+}\left( 1-\frac{r+1}{p}\right) }}
\end{align*}%
and 
\begin{equation}
\crit^{\prime }\left( m_{1}\right) \leq L_{\Delta ,\theta ,\text{(\textbf{SA}%
)}}n^{-\frac{1+\beta _{+}\left( 1-\frac{r+1}{p}\right) }{2+\beta _{+}\left(
1-\frac{r+1}{p}\right) }}\sqrt{\ln (n+1)}\text{ .}  \label{upper_crit_M_1}
\end{equation}

\item Lower bound on $\crit^{\prime }\left( m\right) $: we have%
\begin{align*}
p_{2}\left( m\right) & \leq \left( \frac{1}{2}+L_{\text{(\textbf{SA})}%
}\varepsilon _{n}^{+}\left( m\right) \right) \frac{D_{m}}{n} \\
-\bar{\delta}\left( m\right) & \geq -\sqrt{\frac{2v_{m}z_{n}}{n}}-\frac{%
2z_{n}}{n}\text{ .}
\end{align*}%
Moreover, by Lemma \ref{lem:margin_like_unbounded}, we have for some
constant $A_{MR,-}>0$, $v_{m}\leq A_{MR,-}\mathcal{K}_{m}^{1-\frac{1}{p}}$.
For $\Delta $ large enough we thus get,%
\begin{equation}
\crit^{\prime }\left( m\right) \geq \mathcal{K}_{m}+\left( \theta -\frac{1}{2%
}\right) \frac{D_{m}}{n}-\sqrt{\frac{2A_{MR,-}\mathcal{K}_{m}^{1-\frac{1}{p}%
}z_{n}}{n}}  \label{lower_crit_1}
\end{equation}%
Assume that $\beta _{+}\leq \frac{p}{r+1}$. We take $D_{m}\leq L\left( n/\ln
(n+1)\right) ^{\frac{p}{\left( p+1\right) \beta _{-}}}$ for some constant $%
L>0$. If $L$ is small enough, we have by (\textbf{Ap}) $\mathcal{K}_{m}\geq
\left( 8A_{MR,-}z_{n}/n\right) ^{p/\left( p+1\right) }$ and by (\ref%
{lower_crit_1}), $\crit^{\prime }\left( m\right) \geq \mathcal{K}_{m}/2$.
Now if%
\begin{equation}
D_{m}\leq L\left( \left( \frac{n}{\ln (n+1)}\right) ^{\frac{p}{\left(
1+p\right) \beta _{-}}}\wedge \frac{n^{\frac{\beta _{+}}{\beta _{-}\left(
1+\beta _{+}\right) }}}{\left( \ln (n+1)\right) ^{\frac{1}{2\beta _{-}}}}%
\right)  \label{upper_D_M_hat}
\end{equation}%
with $L$ sufficiently small, only depending on $\Delta ,\theta $ and
constants in (\textbf{SA}), then by (\ref{upper_crit_M_0}) we obtain $\crit%
^{\prime }\left( m\right) >\crit^{\prime }\left( m_{0}\right) $. As $\beta
_{+}\leq \frac{p}{r+1}<p$, the upper bound in (\ref{upper_D_M_hat}) reduces
to $D_{m}\leq Ln^{\frac{\beta _{+}}{\beta _{-}\left( 1+\beta _{+}\right) }%
}/\left( \ln (n+1)\right) ^{\frac{1}{\beta _{-}}}$. This proves the
left-hand side of (\ref{control_dim_hat_Ap}).

\noindent Assume now that $\beta _{+}>\frac{p}{r+1}$. If%
\begin{equation*}
D_{m}\leq L\left( \left( \frac{n}{\ln (n+1)}\right) ^{\frac{p}{\left(
1+p\right) \beta _{-}}}\wedge \frac{n^{\frac{1+\beta _{+}\left( 1-\frac{r+1}{%
p}\right) }{\beta _{-}\left( 2+\beta _{+}\left( 1-\frac{r+1}{p}\right)
\right) }}}{\left( \ln (n+1)\right) ^{\frac{1}{2\beta _{-}}}}\right)
\end{equation*}%
with $L$ sufficiently small, only depending on $\Delta ,\theta $ and
constants in (\textbf{SA}), then by (\ref{upper_crit_M_1}) we obtain $\crit%
^{\prime }\left( m\right) >\crit^{\prime }\left( m_{1}\right) $. As $\beta
_{+}\leq \frac{p}{r+1}<p$, the upper bound in (\ref{upper_D_M_hat}) reduces
to $D_{m}\leq Ln^{\frac{\beta _{+}}{\beta _{-}\left( 1+\beta _{+}\right) }%
}/\left( \ln (n+1)\right) ^{\frac{1}{\beta _{-}}}$. This proves the
left-hand side of (\ref{control_dim_hat_Ap}).

\noindent Assume that $\beta _{+}\leq \frac{p}{r+1}$. We take $D_{m}\leq
L\left( n\ln (n+1)\right) ^{\frac{1}{2+\beta _{+}\left( 1-\frac{1}{p}\right) 
}}$ for some constant $L>0$. If $L$ is large enough, then we get by (\ref%
{lower_crit_1}) and simple calculations, $\crit^{\prime }\left( m\right)
\geq \left( \theta /2-1/4\right) D_{m}/n$. Furthermore, if%
\begin{equation}
D_{m}\geq L\left( \left( n\ln (n+1)\right) ^{\frac{1}{2+\beta _{+}\left( 1-%
\frac{1}{p}\right) }}\wedge n^{\frac{1}{1+\beta _{+}}}\sqrt{\ln (n+1)}\right)
\label{lower_D_M_hat_1}
\end{equation}%
with $L$ sufficiently small, only depending on $\Delta ,\theta $ and
constants in (\textbf{SA}), then by (\ref{upper_crit_M_0}) we obtain $\crit%
^{\prime }\left( m\right) >\crit^{\prime }\left( m_{0}\right) $. As $\beta
_{+}\leq \frac{p}{r+1}<p$, the lower bound in (\ref{lower_D_M_hat_1})
reduces to $D_{m}\leq L\left( n\ln (n+1)\right) ^{\frac{1}{2+\beta
_{+}\left( 1-\frac{1}{p}\right) }}$. This proves the left-hand side of (\ref%
{control_dim_hat_Ap}).

\noindent Assume now that $\beta _{+}>\frac{p}{r+1}$. If%
\begin{equation}
D_{m}\geq L\left( \left( n\ln (n+1)\right) ^{\frac{1}{2+\beta _{+}\left( 1-%
\frac{1}{p}\right) }}\vee n^{^{\frac{1}{2+\beta _{+}\left( 1-\frac{r+1}{p}%
\right) }}}\sqrt{\ln (n+1)}\right)  \label{lower_D_M_hat}
\end{equation}%
with $L$ sufficiently large, only depending on $\Delta ,\theta $ and
constants in (\textbf{SA}), then by (\ref{upper_crit_M_0}) we obtain $\crit%
^{\prime }\left( m\right) >\crit^{\prime }\left( m_{1}\right) $. As $\beta
_{+}\leq \frac{p}{r+1}<p$, the upper bound in (\ref{upper_D_M_hat}) reduces
to $D_{m}\leq Ln^{\frac{\beta _{+}}{\beta _{-}\left( 1+\beta _{+}\right) }%
}/\left( \ln (n+1)\right) ^{\frac{1}{\beta _{-}}}$. This proves the
left-hand side of (\ref{control_dim_hat_Ap}). As $r>0$, (\ref{lower_D_M_hat}%
) reduces to $D_{m}\geq Ln^{^{\frac{1}{2+\beta _{+}\left( 1-\frac{r+1}{p}%
\right) }}}\sqrt{\ln (n+1)}$, which proves (\ref{control_dim_hat_gene}) and
the right-hand side of (\ref{control_dim_hat_Ap}).
\end{enumerate}

\noindent \textbf{Proof of} \textbf{Lemma \ref{lemma_oracle_model}}. By
definition, $m_{\ast }$ minimizes 
\begin{equation*}
\mathcal{K}\left( f_{\ast },\hat{f}_{m}\right) =\mathcal{K}_{m}+p_{1}\left(
m\right)
\end{equation*}%
over the models $m\in \mathcal{M}_{n}.$ The analysis is restricted on $%
\Omega _{n}$.

\begin{enumerate}
\item Upper bound on $\mathcal{K}\left( f_{\ast },\hat{f}_{m}\right) $: we
have 
\begin{align*}
\mathcal{K}\left( f_{\ast },\hat{f}_{m}\right) \leq & C_{+}D_{m}^{-\beta
_{+}}+\left( \frac{1}{2}+L_{\text{(\textbf{SA})}}\varepsilon _{n}^{+}\left(
m\right) \right) \frac{D_{m}}{n}  \notag \\
\leq & L_{\text{(\textbf{SA})}}\left( D_{m}^{-\beta _{+}}+\frac{D_{m}}{n}+%
\frac{\ln (n+1)}{n}\right) \text{ .}  
\end{align*}%
Hence, if $m_{0}$ is such that $D_{m_{0}}=n^{\frac{1}{1+\beta _{+}}}$, then%
\begin{equation}
\mathcal{K}\left( f_{\ast },\hat{f}_{m_{0}}\right) \leq L_{\text{(\textbf{SA}%
)}}n^{-\frac{\beta _{+}}{1+\beta _{+}}}\text{ .}  \label{upper_crit_oracle_2}
\end{equation}

\item Lower bound on $\mathcal{K}\left( f_{\ast },\hat{f}_{m}\right) $:
there exists a constant $A_{0}$, only depending on constants in (\textbf{SA}%
), such that%
\begin{align}
\mathcal{K}\left( f_{\ast },\hat{f}_{m}\right) \geq & \mathcal{K}_{m}+\left( 
\frac{1}{2}-A_{0}\varepsilon _{n}^{-}\left( m\right) \right) \frac{D_{m}}{n}
\notag \\
\geq & \mathcal{K}_{m}+\frac{D_{m}}{2n}-A_{0}\max \left\{ \left( \frac{D_{m}%
}{n}\right) ^{3/2}\sqrt{\ln (n+1)};\frac{\sqrt{D_{m}\ln (n+1)}}{n}\right\} 
\text{ .}  \label{lower_crit_oracle}
\end{align}%
If (\textbf{Ap}) holds, then for 
\begin{equation*}
D_{m}\leq L_{\text{(\textbf{SA})}}\min \left\{ \frac{n^{\frac{3}{3+2\beta
_{-}}}}{\left( \ln (n+1)\right) ^{\frac{1}{3+2\beta _{-}}}};\frac{n^{\frac{2%
}{1+2\beta _{-}}}}{\left( \ln (n+1)\right) ^{\frac{1}{1+2\beta _{-}}}}%
\right\}
\end{equation*}%
with $L_{\text{(\textbf{SA})}}$ sufficiently small, we have 
\begin{equation*}
\mathcal{K}_{m}/2\geq C_{-}D_{m}^{-\beta _{-}}/2\geq A_{0}\max \left\{
\left( \frac{D_{m}}{n}\right) ^{3/2}\sqrt{\ln (n+1)};\frac{\sqrt{D_{m}\ln
(n+1)}}{n}\right\} \text{ .}
\end{equation*}%
In this case, we have by (\ref{lower_crit_oracle}), $\mathcal{K}\left(
f_{\ast },\hat{f}_{m}\right) \geq \mathcal{K}_{m}/2+\left( D_{m}\right)
/2n\geq C_{-}D_{m}^{-\beta _{-}}/2$. Moreover, if $m$ is such that $%
D_{m}\leq L_{\text{(\textbf{SA})}}n^{\frac{\beta _{+}}{\left( 1+\beta
_{+}\right) \beta _{-}}}$ with $L_{\text{(\textbf{SA})}}$ sufficiently
small, we also have%
\begin{equation*}
D_{m}\leq L_{\text{(\textbf{SA})}}n^{\frac{\beta _{+}}{\left( 1+\beta
_{+}\right) \beta _{-}}}\leq L_{\text{(\textbf{SA})}}\min \left\{ n^{\frac{%
\beta _{+}}{\left( 1+\beta _{+}\right) \beta _{-}}};\frac{n^{\frac{3}{%
3+2\beta _{-}}}}{\left( \ln (n+1)\right) ^{\frac{1}{3+2\beta _{-}}}};\frac{%
n^{\frac{2}{1+2\beta _{-}}}}{\left( \ln (n+1)\right) ^{\frac{1}{1+2\beta _{-}%
}}}\right\}
\end{equation*}%
and by (\ref{upper_crit_oracle_2}) we get $\mathcal{K}\left( f_{\ast },\hat{f%
}_{m_{0}}\right) <\mathcal{K}\left( f_{\ast },\hat{f}_{m}\right) $, which
gives the left-hand side of (\ref{control_dim_star_Ap}).

\noindent We turn now to the proof of the right-hand side of (\ref%
{control_dim_star_Ap}). Let $m\in \mathcal{m}_{n}$ be such that $D_{m}\geq
L_{1}n^{\frac{1}{1+\beta _{+}}}$. By (\ref{upper_crit_oracle_2}) we deduce
that if $L_{1}$ is large enough, depending only on constants in (\textbf{SA}%
), then we have 
\begin{equation*}
\frac{D_{m}}{4n}>\mathcal{K}\left( f_{\ast },\hat{f}_{m_{0}}\right) \text{ .}
\end{equation*}%
In addition, if $D_{m}\geq L_{2}\left( \ln (n+1)\right) ^{2}$ and $D_{m}\leq
L_{2}^{-1}n/\ln (n+1)$ for some constant $L_{2}$ sufficiently large, then 
\begin{equation*}
\frac{D_{m}}{4n}\geq A_{0}\max \left\{ \left( \frac{D_{m}}{n}\right) ^{3/2}%
\sqrt{\ln (n+1)};\frac{\sqrt{D_{m}\ln (n+1)}}{n}\right\}
\end{equation*}%
and by (\ref{lower_crit_oracle}), we deduce that $\mathcal{K}\left( f_{\ast
},\hat{f}_{m}\right) >\mathcal{K}\left( f_{\ast },\hat{f}_{m_{0}}\right) $.
The latter inequality implies that $D_{m_{\ast }}\leq L_{1}n^{\frac{1}{%
1+\beta _{+}}}$. Our reasoning is valid if $n$ is such that $L_{2}\left( \ln
(n+1)\right) ^{2}\leq L_{1}n^{\frac{1}{1+\beta _{+}}}\leq L_{2}^{-1}n/\ln
(n+1)$. At the price of enlarging $L_{1}$, we can always achieve $%
L_{2}\left( \ln (n+1)\right) ^{2}\leq L_{1}n^{\frac{1}{1+\beta _{+}}}$, with 
$L_{1}$ not depending on $n$. Then if $L_{2}^{-1}n/\ln (n+1)<L_{1}n^{\frac{1%
}{1+\beta _{+}}}$, we still have%
\begin{equation*}
D_{m_{\ast }}\leq \max_{m\in \mathcal{M}_{n}}D_{m}\leq A_{\mathcal{M},+}%
\frac{n}{\left( \ln (n+1)\right) ^{2}}\leq A_{\mathcal{M},+}L_{2}L_{1}n^{%
\frac{1}{1+\beta _{+}}}\text{ .}
\end{equation*}%
In every case, there exists $L>0$ only depending on constants in (\textbf{SA}%
) such that $D_{m_{\ast }}\leq Ln^{\frac{1}{1+\beta _{+}}}$.
\end{enumerate}

\begin{lemma}
\label{prop_ineq_tech}Let $\left( a,b\right) \in \mathbb{R}_{+}^{2}$ and $%
\left( u,v\right) \in \left[ 1,\infty \right] $ such that $1/u+1/v=1$. Then%
\begin{equation*}
ab\leq \max \left\{ a^{u};b^{v}\right\} \leq a^{u}+b^{v}\text{ .}
\end{equation*}
\end{lemma}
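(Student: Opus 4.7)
The inequality $\max\{a^u, b^v\} \le a^u + b^v$ is trivial from nonnegativity, so the whole content lies in the first bound $ab \le \max\{a^u, b^v\}$. I would first dispose of degenerate cases ($ab=0$, or $(u,v) = (1,\infty)$ / $(\infty,1)$) by direct verification, and then assume $u, v \in (1, \infty)$ with $a, b > 0$.

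My plan is to exploit the algebraic identity forced by the conjugacy relation $1/u + 1/v = 1$, namely
\begin{equation*}
(u-1)(v-1) = uv - u - v + 1 = uv - uv + 1 = 1,
\end{equation*}
so that $u - 1 = 1/(v-1)$ and $v/u = v - 1$. Then, without loss of generality, assume that $a^u \ge b^v$ (the opposite case is handled symmetrically by swapping the roles of $a,b$ and $u,v$). Taking the $1/u$-th power of this inequality gives $a \ge b^{v/u} = b^{v-1}$, hence $b \le a^{1/(v-1)} = a^{u-1}$. Multiplying both sides by $a > 0$ yields $ab \le a \cdot a^{u-1} = a^u = \max\{a^u, b^v\}$, as desired.

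An alternative one-line route, which I would mention as a remark, is to invoke the classical Young inequality $ab \le a^u/u + b^v/v$ and then bound each term by the maximum:
\begin{equation*}
\frac{a^u}{u} + \frac{b^v}{v} \le \max\{a^u, b^v\}\left(\frac{1}{u} + \frac{1}{v}\right) = \max\{a^u, b^v\}.
\end{equation*}
No step here is an obstacle; the only mildly delicate point is keeping track of the endpoint conventions when $u$ or $v$ equals $1$ or $\infty$, which the case-distinction at the outset handles cleanly.
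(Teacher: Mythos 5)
Your main argument is essentially the same as the paper's: assume $a^u \ge b^v$ by symmetry, use the conjugacy identity to get $b \le a^{u-1}$, and multiply by $a$. The extra care with degenerate endpoints and the Young-inequality remark are fine additions but do not change the underlying route.
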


\begin{proof}
By symmetry, we can assume $a^{u}\geq b^{v}$. Then $b=\left( b^{v}\right)
^{1/v}\leq a^{u/v}=a^{u-1}$ and so, $ab\leq aa^{u-1}=a^{u}\leq a^{u}+b^{v}$.
\end{proof}

\end{document}